\newtheorem{thm}{Theorem}[section]
\newtheorem{lem}[thm]{Lemma}
\newtheorem{defn}[thm]{Definition}
\numberwithin{equation}{section}
\begin{document}

\title{{\bf Elliptic genera and $E_{8}$ Bundles in odd dimensions }}

\author{ Siyao Liu \ \ Yong Wang* \\
 }

\date{}

\thanks{{\scriptsize
\hskip -0.4 true cm \textit{2010 Mathematics Subject Classification:}
58C20; 57R20; 53C80.
\newline \textit{Key words and phrases:}  Modular forms; Spin manifolds; Spin$^c$ manifolds;
Anomaly cancellation formulas; $E_{8}$ bundles.
\newline \textit{* Corresponding author.}}}

\maketitle

\begin{abstract}
This paper aims to derive new anomaly cancellation formulas by combining modular forms with $E_8$ and $E_{8}\times E_{8}$ bundles.
To this end, we systematically twist and generalize known $SL(2,\mathbf{Z})$ modular forms to define new modular forms associated with these bundles on odd-dimensional spin and spin$^c$ manifolds, leading to a new series of anomaly cancellation formulas.
\end{abstract}

\vskip 0.2 true cm

\tableofcontents

\section{Introduction}

In \cite{AW}, Alvarez-Gaum\'{e} and Witten discovered the ``miraculous cancellation" formula for gravitational anomaly in a $12$-dimensional smooth Riemannian manifold $M$:
\begin{align}
\Big\{\widehat{L}(TM, \nabla^{TM})\Big\}^{(12)}=\Big\{\widehat{A}(TM, \nabla^{TM})ch(T_{\mathbf{C}}M, \nabla^{T_{\mathbf{C}}M})-32\widehat{A}(TM, \nabla^{TM})\Big\}^{(12)},\nonumber
\end{align}
where $T_{\mathbf{C}}M$ denotes the complexification of $TM$ and $\nabla^{T_{\mathbf{C}}M}$ is canonically induced from $\nabla^{TM},$ the Levi-Civita connection associated to the Riemannian structure of $M.$
This formula reveals the beautiful relationship between the top components of the Hirzebruch $\widehat{L}$-form and $\widehat{A}$-form.
Liu established higher-dimensional ``miraculous cancellation" formulas for $(8k+4)$-dimensional Riemannian manifolds through the development of modular invariance properties of characteristic forms \cite{L}.
In \cite{HZ1,HZ2}, Han and Zhang established general cancellation formulas for $(8k+4)$-dimensional smooth Riemannian manifolds that are each equipped with a complex line bundle, through the study of modular invariance properties of some characteristic forms.
Subsequently, further cancellation formulas for higher-dimensional Riemannian manifolds were obtained in \cite{HH}.
In \cite{W1}, Wang proved more general cancellation formulas for $(8k+2)$ and $(8k+6)$-dimensional smooth Riemannian manifolds.
Wang extended this approach to both spin and spin$^c$ manifolds, deriving corresponding cancellation formulas in each case \cite{W2}.
Using the Eisenstein series as a new methodological tool, Han, Liu and Zhang derived more new anomaly cancellation formulas in their research \cite{HLZ}.
In \cite{LW}, the authors generalized the Han-Zhang and Han-Liu-Zhang cancellation formulas to the $(a, b)$ type cancellation formulas for $(4k)$-dimensional Riemannian manifolds.
The study of odd-dimensional manifolds, which are generally more complex, presents additional challenges.
Addressing this complexity, Liu and Wang in \cite{LW1} and \cite{LW2} derived new anomaly cancellation formulas for $(4k-1)$-dimensional manifolds by studying  modular invariance properties.
In recent work, by using the $SL(2,\mathbf{Z})$ modular forms introduced in \cite{L2} and \cite{CHZ}, Guan constructed new modular forms over $SL(2,\mathbf{Z})$, $\Gamma^0(2)$ and $\Gamma_0(2)$ in odd dimensions and obtained new cancellation formulas for odd-dimensional spin and spin$^c$ manifolds, respectively \cite{GWL}.

The exploration of $E_8$ bundles is deeply motivated by their physical implications. 
As a key example, \cite{AE} considers an $E_8$ bundle over an $11$-fold.
Leveraging this, recent research has made concrete progress in anomaly cancellation by systematically incorporating $E_8$ bundles into the framework, leading to new derived formulas.
In \cite{HLZ2}, Han, Liu and Zhang demonstrated that both the Ho$\check{r}$ava-Witten anomaly factorization formula for the gauge group $E_{8}$ and the Green-Schwarz anomaly factorization formula for $E_{8}\times E_{8}$ can be derived using modular forms of weight $14.$
Subsequently, in \cite{HHLZ}, by $E_{8}$ bundles, Han, Huang, Liu and Zhang introduced a modular form of weight $14$ and a modular form of weight $10$ over $SL(2,\mathbf{Z})$ and obtained interesting anomaly cancellation formulas on $12$-dimensional manifolds.
In a related development, Wang and Yang \cite{WY} twisted the Chen-Han-Zhang $SL(2,\mathbf{Z})$ modular form \cite{CHZ} by $E_{8}$ bundles, yielding $SL(2,\mathbf{Z})$ modular forms of weights $14$ and $10$ for $14$ and $10$-dimensional spin manifolds, respectively.
Building on this, they applied the same technique to the modular form of Liu \cite{L}, thereby constructing $\Gamma^0(2)$ and $\Gamma_0(2)$ modular forms. These constructions led to novel anomaly cancellation formulas involving characteristic forms.

Building upon this background, this paper aims to explore the combination of modular forms with $E_8$ bundles, with the goal of deriving corresponding new anomaly cancellation formulas.
Specifically, starting from the $SL(2,\mathbf{Z})$ modular forms established in \cite{L2}, \cite{CHZ}, and \cite{GWL}, we twist and generalize them using $E_{8}$ and $E_{8}\times E_{8}$ bundles.
This approach enables the systematic construction of new modular forms on odd-dimensional spin and spin$^c$ manifolds, leading to new anomaly cancellation formulas.

This paper is organized as follows:
Section 2 provides the necessary definitions and basic notions.
Section 3 focuses on $E_{8}$ bundles. We combine $E_{8}$ bundles with modular forms to construct new modular forms over $SL(2,\mathbf{Z})$ on odd-dimensional spin and spin$^c$ manifolds, thereby deriving new anomaly cancellation formulas.
Section 4 extends the study to $E_{8}\times E_{8}$ bundles. Following a similar methodology, we construct associated modular forms for these bundles on spin and spin$^c$ manifolds and systematically derive a new series of cancellation formulas.

\section{Characteristic Forms and Modular Forms}

Firstly, we give some definitions and basic notions on characteristic forms and modular forms that will be used throughout the paper.
For the details, see \cite{A,H,Z}.

\subsection{characteristic forms}\hfill\\

Let $M$ be a Riemannian manifold, $\nabla^{TM}$ be the associated Levi-Civita connection on $TM$ and $R^{TM}=(\nabla^{TM})^{2}$ be the curvature of $\nabla^{TM}.$
According to the detailed descriptions in \cite{Z}, let $\widehat{A}(TM, \nabla^{TM})$ and $\widehat{L}(TM, \nabla^{TM})$ be the Hirzebruch characteristic forms defined respectively by
\begin{align}
\widehat{A}(TM, \nabla^{TM})&=\det\nolimits^{\frac{1}{2}}\left(\frac{\frac{\sqrt{-1}}{4\pi}R^{TM}}{\sinh(\frac{\sqrt{-1}}{4\pi}R^{TM})}\right),\\
\widehat{L}(TM, \nabla^{TM})&=\det\nolimits^{\frac{1}{2}}\left(\frac{\frac{\sqrt{-1}}{2\pi}R^{TM}}{\tanh(\frac{\sqrt{-1}}{4\pi}R^{TM})}\right).
\end{align}

Let $E, F$ be two Hermitian vector bundles over $M$ carrying Hermitian connection $\nabla^{E}, \nabla^{F}$ respectively.
Let $R^{E}=(\nabla^{E})^{2}$ (resp. $R^{F}=(\nabla^{F})^{2}$) be the curvature of $\nabla^{E}$ (resp. $\nabla^{F}$).
If we set the formal difference $G=E-F,$ then $G$ carries an induced Hermitian connection $\nabla^{G}$ in an obvious sense.
We define the associated Chern character form as
\begin{align}
\text{ch}(G, \nabla^{G})=\text{tr}\left[\exp\left(\frac{\sqrt{-1}}{2\pi}R^{E}\right)\right]-\text{tr}\left[\exp\left(\frac{\sqrt{-1}}{2\pi}R^{F}\right)\right].
\end{align}

For any complex number $t,$ let
\begin{align}
\wedge_{t}(E)&=\mathbf{C}|_{M}+tE+t^{2}\wedge^{2}(E)+\cdot\cdot\cdot,\\
S_{t}(E)&=\mathbf{C}|_{M}+tE+t^{2}S^{2}(E)+\cdot\cdot\cdot,
\end{align}
denote respectively the total exterior and symmetric powers of $E,$ which live in $K(M)[[t]].$
The following relations between these operations hold,
\begin{align}
S_{t}(E)=\frac{1}{\wedge_{-t}(E)},~~~~ \wedge_{t}(E-F)=\frac{\wedge_{t}(E)}{\wedge_{t}(F)}.
\end{align}
Moreover, if $\{ \omega_{i} \}, \{ \omega'_{j} \}$ are formal Chern roots for Hermitian vector bundles $E, F$ respectively, then
\begin{align}
\text{ch}(\wedge_{t}(E))=\prod_{i}(1+e^{\omega_{i}}t).
\end{align}
Then we have the following formulas for Chern character forms,
\begin{align}
\text{ch}(S_{t}(E))=\frac{1}{\prod\limits_{i}(1-e^{\omega_{i}}t)},~~~ \text{ch}(\wedge_{t}(E-F))=\frac{\prod\limits_{i}(1+e^{\omega_{i}}t)}{\prod\limits_{j}(1+e^{\omega'_{j}}t)}.
\end{align}

If $W$ is a real Euclidean vector bundle over $M$ carrying a Euclidean connection $\nabla^{W},$ then its complexification $W_{\mathbf{C}}=W\otimes \mathbf{C}$ is a complex vector bundle over $M$ carrying a canonical induced Hermitian metric from that of $W,$ as well as a Hermitian connection $\nabla^{W_{\mathbf{C}}}$ induced from $\nabla^{W}.$
If $E$ is a vector bundle (complex or real) over $M,$ set $\widetilde{E}=E-\dim E$ in $K(M)$ or $KO(M).$

\subsection{Some properties about the Jacobi theta functions and modular forms}\hfill\\

Refer to \cite{C}, we recall the four Jacobi theta functions are defined as follows:
\begin{align}
&\theta(v, \tau)=2q^{\frac{1}{8}}\sin(\pi v)\prod^{\infty}_{j=1}[(1-q^{j})(1-e^{2\pi\sqrt{-1}v}q^{j})(1-e^{-2\pi\sqrt{-1}v}q^{j})];\\
&\theta_{1}(v, \tau)=2q^{\frac{1}{8}}\cos(\pi v)\prod^{\infty}_{j=1}[(1-q^{j})(1+e^{2\pi\sqrt{-1}v}q^{j})(1+e^{-2\pi\sqrt{-1}v}q^{j})];\\
&\theta_{2}(v, \tau)=\prod^{\infty}_{j=1}[(1-q^{j})(1-e^{2\pi\sqrt{-1}v}q^{j-\frac{1}{2}})(1-e^{-2\pi\sqrt{-1}v}q^{j-\frac{1}{2}})];\\
&\theta_{3}(v, \tau)=\prod^{\infty}_{j=1}[(1-q^{j})(1+e^{2\pi\sqrt{-1}v}q^{j-\frac{1}{2}})(1+e^{-2\pi\sqrt{-1}v}q^{j-\frac{1}{2}})],
\end{align}
where $q=e^{2\pi\sqrt{-1}\tau}$ with $\tau\in\mathbf{H},$ the upper half complex plane.

Let
\begin{align}
\theta'(0, \tau)=\frac{\partial \theta(v, \tau)}{\partial v}\Big|_{v=0}.
\end{align}
Then the following Jacobi identity holds,
\begin{align}
\theta'(0, \tau)=\pi\theta_{1}(0, \tau)\theta_{2}(0, \tau)\theta_{3}(0, \tau).
\end{align}

In what follows,
\begin{align}
SL(2,{\bf{Z}})=\Big\{ \Big( \begin{array}{cc}
a & b\\
c & d
\end{array}
\Big)\Big|a, b, c, d\in\mathbf{Z}, ad-bc=1
\Big \}
\end{align}
stands for the modular group.
Write $S=\Big(\begin{array}{cc}
0 & -1\\
1 & 0
\end{array}\Big),
T=\Big(\begin{array}{cc}
1 & 1\\
0 & 1
\end{array}\Big)$
be the two generators of $SL(2,{\bf{Z}}).$
They act on $\mathbf{H}$ by $S\tau=-\frac{1}{\tau}, T\tau=\tau+1.$
One has the following transformation laws of theta functions under the actions of $S$ and $T:$
\begin{align}
&\theta(v, \tau+1)=e^{\frac{\pi\sqrt{-1}}{4}}\theta(v, \tau),~~~~ \theta(v, -\frac{1}{\tau})=\frac{1}{\sqrt{-1}}\Big(\frac{\tau}{\sqrt{-1}}\Big)^{\frac{1}{2}}e^{\pi\sqrt{-1}\tau v^{2}}\theta(\tau v, \tau);\\
&\theta_{1}(v, \tau+1)=e^{\frac{\pi\sqrt{-1}}{4}}\theta_{1}(v, \tau),~~~~ \theta_{1}(v, -\frac{1}{\tau})=\Big(\frac{\tau}{\sqrt{-1}}\Big)^{\frac{1}{2}}e^{\pi\sqrt{-1}\tau v^{2}}\theta_{2}(\tau v, \tau);\\
&\theta_{2}(v, \tau+1)=\theta_{3}(v, \tau),~~~~ \theta_{2}(v, -\frac{1}{\tau})=\Big(\frac{\tau}{\sqrt{-1}}\Big)^{\frac{1}{2}}e^{\pi\sqrt{-1}\tau v^{2}}\theta_{1}(\tau v, \tau);\\
&\theta_{3}(v, \tau+1)=\theta_{2}(v, \tau),~~~~ \theta_{3}(v, -\frac{1}{\tau})=\Big(\frac{\tau}{\sqrt{-1}}\Big)^{\frac{1}{2}}e^{\pi\sqrt{-1}\tau v^{2}}\theta_{3}(\tau v, \tau).
\end{align}
Differentiating the above transformation formulas, we get that
\begin{align}
&\theta'(v, \tau+1)=e^{\frac{\pi\sqrt{-1}}{4}}\theta'(v, \tau),\\
&\theta'(v, -\frac{1}{\tau})=\frac{1}{\sqrt{-1}}\Big(\frac{\tau}{\sqrt{-1}}\Big)^{\frac{1}{2}}e^{\pi\sqrt{-1}\tau v^{2}}(2\pi\sqrt{-1}\tau v\theta(\tau v, \tau)+\tau\theta'(\tau v, \tau));\nonumber\\
&\theta_{1}'(v, \tau+1)=e^{\frac{\pi\sqrt{-1}}{4}}\theta_{1}'(v, \tau),\\
&\theta_{1}'(v, -\frac{1}{\tau})=\Big(\frac{\tau}{\sqrt{-1}}\Big)^{\frac{1}{2}}e^{\pi\sqrt{-1}\tau v^{2}}(2\pi\sqrt{-1}\tau v\theta_{2}(\tau v, \tau)+\tau\theta'_{2}(\tau v, \tau));\nonumber\\
&\theta'_{2}(v, \tau+1)=\theta'_{3}(v, \tau),\\
&\theta'_{2}(v, -\frac{1}{\tau})=\Big(\frac{\tau}{\sqrt{-1}}\Big)^{\frac{1}{2}}e^{\pi\sqrt{-1}\tau v^{2}}(2\pi\sqrt{-1}\tau v\theta_{1}(\tau v, \tau)+\tau\theta'_{1}(\tau v, \tau));\nonumber\\
&\theta'_{3}(v, \tau+1)=\theta'_{2}(v, \tau),\\
&\theta'_{3}(v, -\frac{1}{\tau})=\Big(\frac{\tau}{\sqrt{-1}}\Big)^{\frac{1}{2}}e^{\pi\sqrt{-1}\tau v^{2}}(2\pi\sqrt{-1}\tau v\theta_{3}(\tau v, \tau)+\tau\theta'_{3}(\tau v, \tau)).\nonumber
\end{align}
Therefore
\begin{align}
\theta'(0, -\frac{1}{\tau})=\frac{1}{\sqrt{-1}}\Big(\frac{\tau}{\sqrt{-1}}\Big)^{\frac{1}{2}}\tau\theta'(0, \tau).
\end{align}

\begin{defn}
A modular form over $\Gamma,$ a subgroup of $SL(2,{\bf{Z}}),$ is a holomorphic function $f(\tau)$ on $\mathbf{H}$ such that
\begin{align}
f(g\tau):=f\Big(\frac{a\tau+b}{c\tau+d}\Big)=\chi(g)(c\tau+d)^{k}f(\tau),~\forall g=\Big(\begin{array}{cc}
a & b\\
c & d
\end{array}\Big)\in\Gamma,
\end{align}
where $\chi:\Gamma\rightarrow\mathbf{C}^{*}$ is a character of $\Gamma,$ $k$ is called the weight of $f.$
\end{defn}

Let $E_{2}(\tau)$ be Eisenstein series which is a quasimodular form over $SL(2,{\bf{Z}}),$ satisfying
\begin{align}
E_{2}\Big(\frac{a\tau+b}{c\tau+d}\Big)=(c\tau+d)^{2}E_{2}(\tau)-\frac{6\sqrt{-1}c(c\tau+d)}{\pi}.
\end{align}
In particular, we have
\begin{align}
&E_{2}(\tau+1)=E_{2}(\tau),\\
&E_{2}\Big(-\frac{1}{\tau}\Big)=\tau^{2}E_{2}(\tau)-\frac{6\sqrt{-1}\tau}{\pi},
\end{align}
and
\begin{align}
E_{2}(\tau)=1-24q-72q^{2}+\cdot\cdot\cdot,
\end{align}
where the $``\cdot\cdot\cdot"$ terms are the higher degree terms, all of which have integral coefficients.

For the principal $E_{8}$ bundles $P_{i}, i=1,2$ consider the associated bundles
\begin{align}
\mathcal{V}_{i}=\sum_{k=0}^{\infty}(P_{i}\times_{\rho_{k}}V_{k})q^{k}\in K(Z)[[q]].
\end{align}
Let $W_{i}=P_{i}\times_{\rho_{1}}V_{1}, i=1,2$ be the complex vector bundles associated to the adjoint representation $\rho_{1},$ $\overline{W_{i}}=P_{i}\times_{\rho_{2}}V_{2}, i=1,2$ be the complex vector bundles associated to the adjoint representation $\rho_{2}.$

Following the notation of \cite{HLZ2}, we have that there are formal two forms $y_{l}^{i}, 1\leq l\leq 8, i=1,2$ such that
\begin{align}
\varphi(\tau)^{(8)}ch(\mathcal{V}_{i})=\frac{1}{2}\Big(\prod_{l=1}^{8}\theta_{1}(y_{l}^{i}, \tau)+\prod_{l=1}^{8}\theta_{2}(y_{l}^{i}, \tau)+\prod_{l=1}^{8}\theta_{3}(y_{l}^{i}, \tau)\Big),
\end{align}
and
\begin{align}
\sum_{l=1}^{8}(2\pi\sqrt{-1}y_{l}^{i})^{2}=-\frac{1}{30}c_{2}(W_{i}),
\end{align}
where $\varphi(\tau)=\prod_{n=1}^{\infty}(1-q^{n}),$ $c_{2}(W_{i})$ denotes the second Chern class of $W_{i}.$

\section{Modular forms and Witten genus associated with $E_{8}$ bundles}

In this section, we construct modular forms over $SL(2,{\bf{Z}})$ associated with $E_{8}$ bundles.
Furthermore, we derive new cancellation formulas for these modular forms on odd-dimensional spin and spin$^c$ manifolds, respectively.

\subsection{Modular forms in $(4k-1)$-dimensional spin manifolds}\hfill\\

Let $M$ be a $(4k-1)$-dimensional spin manifold and $\triangle(M)$ be the spinor bundle.
Suppose that
\begin{equation}
   \Theta_1(T_{\mathbf{C}}M)=
   \bigotimes _{n=1}^{\infty}S_{q^n}(\widetilde{T_{\mathbf{C}}M})\otimes
\bigotimes _{m=1}^{\infty}\wedge_{q^m}(\widetilde{T_{\mathbf{C}}M})
,\end{equation}
\begin{equation}
\Theta_2(T_{\mathbf{C}}M)=\bigotimes _{n=1}^{\infty}S_{q^n}(\widetilde{T_{\mathbf{C}}M})\otimes
\bigotimes _{m=1}^{\infty}\wedge_{-q^{m-\frac{1}{2}}}(\widetilde{T_{\mathbf{C}}M}),
\end{equation}
\begin{equation}
\Theta_3(T_{\mathbf{C}}M)=\bigotimes _{n=1}^{\infty}S_{q^n}(\widetilde{T_{\mathbf{C}}M})\otimes
\bigotimes _{m=1}^{\infty}\wedge_{q^{m-\frac{1}{2}}}(\widetilde{T_{\mathbf{C}}M}).
\end{equation}

We recall the odd Chern character associated to a smooth map g from M to the general linear group $GL(N,\mathbf{C}),$
where $N$ is a positive integer (see \cite{LW1,GWL}).
Let $d$ denote a trivial connection on $\mathbf{C}^{N}|_{M}$.
The associated cohomology class is denoted by $c_g(M,[g])$, which corresponds to the closed $n$-form
\begin{equation}
  c_n(\mathbf{C}^{N}|_M,g,d)=\left(\frac{1}{2\pi\sqrt{-1}}\right)^{\frac{n+1}{2}}\mathrm{Tr}[(g^{-1}dg)^n].
\end{equation}
The odd Chern character form ${\rm ch}(\mathbf{C}^{N}|_M,g,d)$ associated to $g$ and $d$ by definition is
\begin{equation}
  {\rm ch}(\mathbf{C}^{N}|_M,g,d)=\sum^{\infty}_{n=1}\frac{n!}{(2n+1)!}c_{2n+1}((\mathbf{C}^{N}|_M,g,d)).
\end{equation}
Let the connection $\nabla_{u}$ on the trivial bundle $\mathbf{C}^{N}|_M$ defined by
\begin{equation}
  \nabla_u=(1-u)d+ug^{-1}\cdot d \cdot g,\ \ u\in[0,1].
\end{equation}
Then we have
\begin{equation}
  d{\rm ch}(\mathbf{C}^{N}|_M,g,d)={\rm ch}(\mathbf{C}^{N}|_M,d)-{\rm ch}(\mathbf{C}^{N}|_M,g^{-1}\cdot d\cdot g).
\end{equation}

Now let $g:M\to SO(N)$ and we assume that $N$ is even and large enough.
Let $E$ denote the trivial real vector bundle of rank $N$ over $M$. We equip $E$ with the canonical trivial metric and trivial connection $d$.
Set
\begin{equation}
\nabla_u=d+ug^{-1}dg,\ \ u\in[0,1].
\end{equation}
Let $R_u$ be the curvature of $\nabla_u$, then
\begin{equation}
  R_u=(u^2-u)(g^{-1}dg)^2.
\end{equation}

We also consider the complexification of $E$ and $g$ extends to a unitary automorphism of $E_{\mathbf{C}}$. The connection $\nabla_u$ extends to a Hermitian connection on $E_{\mathbf{C}}$ with curvature still given by (3.6). Let $\Delta(E)$ be the spinor bundle of $E$, which is a trivial Hermitian
bundle of rank $2^{\frac{N}{2}}$. We assume that $g$ has a lift to the spin group ${\rm Spin}(N):g^{\Delta}:M\to {\rm Spin}(N)$. So $g^{\Delta}$ can be viewed as an automorphism of $\Delta(E)$ preserving the Hermitian metric. We lift $d$ on $E$ to be a trivial Hermitian connection $d^{\Delta}$ on $\Delta(E)$, then
\begin{equation}
  \nabla_u^{\Delta}=(1-u)d^{\Delta}+u(g^{\Delta})^{-1}\cdot d^{\Delta} \cdot g^{\Delta},\ \ u\in[0,1]
\end{equation}
lifts $\nabla_u$ on $E$ to $\Delta(E)$.

Set $Q_j(E),j=1,2,3$ be the virtual bundles defined as follows:
\begin{align}
Q_1(E)&=\triangle(E)\otimes
   \bigotimes _{n=1}^{\infty}\wedge_{q^n}(\widetilde{E_C});\\
Q_2(E)&=\bigotimes _{n=1}^{\infty}\wedge_{-q^{n-\frac{1}{2}}}(\widetilde{E_C});\\
Q_3(E)&=\bigotimes _{n=1}^{\infty}\wedge_{q^{n-\frac{1}{2}}}(\widetilde{E_C}).
\end{align}
Let $g$ on $E$ have a lift $g^{Q(E)}$ on $Q(E)$ and $\nabla_u$ have a lift $\nabla^{Q(E)}_u$ on $Q(E)$. Following \cite{HY}, we defined ${\rm ch}(Q(E),g^{Q(E)},d,\tau)$ as following
\begin{align}
{\rm ch}(Q(E),\nabla^{Q(E)}_0,\tau)-{\rm ch}(Q(E),\nabla^{Q(E)}_1,\tau)=d{\rm ch}(Q(E),g^{Q(E)},d,\tau),
\end{align}
where
\begin{align}
Q(E)=Q_1(E)\otimes Q_2(E)\otimes Q_3(E),
\end{align}
and
\begin{align}
{\rm ch}(Q(E),g^{Q(E)},d,\tau)=-\frac{2^{\frac{N}{2}}}{8\pi^2}\int^1_0{\rm Tr}\left[g^{-1}dg\left(A\right)\right]du,
\end{align}
with
\begin{align}
A=\frac{\theta'_1(R_u/(4\pi^2),\tau)}{\theta_1(R_u/(4\pi^2),\tau)}
+\frac{\theta'_2(R_u/(4\pi^2),\tau)}{\theta_2(R_u/(4\pi^2),\tau)}+\frac{\theta'_3(R_u/(4\pi^2),\tau)}
{\theta_3(R_u/(4\pi^2),\tau)}.
\end{align}

\begin{lem}(\cite{GWL,HY})
If $c_3(E_C,g,d)=0,$ then for any integer $r\geq 1,$  we have
\begin{align}
{\rm ch}(Q(E),g^{Q(E)},d,\tau+1)^{(4r-1)}={\rm ch}(Q(E),g^{Q(E)},d,\tau)^{(4r-1)},
\end{align}
and
\begin{align}
{\rm ch}\Big(Q(E),g^{Q(E)},d,-\frac{1}{\tau}\Big)^{(4r-1)}=\tau^{2r}{\rm ch}(Q(E),g^{Q(E)},d,\tau)^{(4r-1)},
\end{align}
so ${\rm ch}(Q(E),g^{Q(E)},d,\tau)^{(4r-1)}$ are modular forms of weight $2r$ over $SL(2,\bf{Z})$.
\end{lem}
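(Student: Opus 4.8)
The plan is to reduce both transformation laws to the behaviour of the single quantity
\begin{align}
A(v,\tau)=\frac{\theta'_1(v,\tau)}{\theta_1(v,\tau)}+\frac{\theta'_2(v,\tau)}{\theta_2(v,\tau)}+\frac{\theta'_3(v,\tau)}{\theta_3(v,\tau)},\nonumber
\end{align}
evaluated at $v=R_u/(4\pi^2)$, which is the only $\tau$-dependent ingredient of ${\rm ch}(Q(E),g^{Q(E)},d,\tau)$. Since each $\theta_j(v,\tau)$ is an even function of $v$, each $\theta'_j(v,\tau)$ is odd, so $A(v,\tau)$ is an odd power series $A(v,\tau)=\sum_{k\geq 1}a_{2k-1}(\tau)v^{2k-1}$. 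Substituting $v=R_u/(4\pi^2)$ and pairing with $g^{-1}dg$ under the trace shows that the term $v^{2k-1}$ produces the $(4k-1)$-form ${\rm Tr}[g^{-1}dg\,R_u^{2k-1}]$; hence ${\rm ch}(Q(E),g^{Q(E)},d,\tau)$ has components only in degrees $4r-1$, the degree-$(4r-1)$ piece being governed by the coefficient $a_{2r-1}(\tau)$ of $R_u^{2r-1}$.

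For the first identity I would read off the $T$-action directly from the theta transformation laws: $\theta_1(v,\tau+1)=e^{\pi\sqrt{-1}/4}\theta_1(v,\tau)$, so its logarithmic derivative is unchanged, while $\theta_2(v,\tau+1)=\theta_3(v,\tau)$ and $\theta_3(v,\tau+1)=\theta_2(v,\tau)$ merely interchange the two remaining logarithmic derivatives. Summing the three terms gives $A(v,\tau+1)=A(v,\tau)$ identically in $v$, whence ${\rm ch}(Q(E),g^{Q(E)},d,\tau+1)={\rm ch}(Q(E),g^{Q(E)},d,\tau)$ in every degree, in particular in degree $4r-1$.

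For the second identity the key computation is the $S$-action. Using the listed laws for $\theta_j(v,-1/\tau)$ and $\theta'_j(v,-1/\tau)$, every logarithmic derivative acquires the same additive and multiplicative factors, for instance
\begin{align}
\frac{\theta'_1(v,-1/\tau)}{\theta_1(v,-1/\tau)}=2\pi\sqrt{-1}\tau v+\tau\frac{\theta'_2(\tau v,\tau)}{\theta_2(\tau v,\tau)},\nonumber
\end{align}
with the pair $\theta_1\leftrightarrow\theta_2$ interchanged and $\theta_3$ fixed. Adding the three and using that the sum is symmetric in $\theta_1,\theta_2,\theta_3$ yields the clean relation $A(v,-1/\tau)=6\pi\sqrt{-1}\tau v+\tau A(\tau v,\tau)$. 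Substituting $v=R_u/(4\pi^2)$ and expanding in the odd powers $R_u^{2k-1}$, the rescaling $v\mapsto\tau v$ multiplies the coefficient of $R_u^{2r-1}$ by $\tau\cdot\tau^{2r-1}=\tau^{2r}$, so the degree-$(4r-1)$ component of $\tau A(\tau v,\tau)$ is exactly $\tau^{2r}$ times that of $A(v,\tau)$.

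The only obstruction, and the point to watch, is the extra linear term $6\pi\sqrt{-1}\tau v$: it is a single power of $R_u$, hence a $2$-form, and therefore contributes solely to degree $3$, i.e. to the case $r=1$. There, inserting $R_u=(u^2-u)(g^{-1}dg)^2$ turns its contribution into a multiple of ${\rm Tr}[(g^{-1}dg)^3]$, which is proportional to $c_3(E_C,g,d)$ and vanishes by hypothesis; for $r\geq 2$ this term does not reach degree $4r-1$ at all. Thus for every $r\geq 1$ one is left with ${\rm ch}(Q(E),g^{Q(E)},d,-\frac{1}{\tau})^{(4r-1)}=\tau^{2r}{\rm ch}(Q(E),g^{Q(E)},d,\tau)^{(4r-1)}$. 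Since $S$ and $T$ generate $SL(2,\mathbf{Z})$, the two verified relations $f(\tau+1)=f(\tau)$ and $f(-1/\tau)=\tau^{2r}f(\tau)$ (together with the holomorphy supplied by the $q$-expansions of the theta quotients) identify ${\rm ch}(Q(E),g^{Q(E)},d,\tau)^{(4r-1)}$ as a modular form of weight $2r$ over $SL(2,\mathbf{Z})$ with trivial character. The delicate bookkeeping is precisely isolating the degree-$3$ anomaly and recognizing that the hypothesis $c_3=0$ is exactly what removes it.
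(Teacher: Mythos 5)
The paper does not actually prove this lemma; it is quoted verbatim from the cited references [GWL, HY], so there is no in-paper argument to compare against. Your reconstruction is correct and is precisely the standard argument of those references: reduce everything to the odd power series $A(v,\tau)=\sum_j \theta'_j(v,\tau)/\theta_j(v,\tau)$, read off $T$-invariance from the theta transformation laws, derive $A(v,-1/\tau)=6\pi\sqrt{-1}\,\tau v+\tau A(\tau v,\tau)$ for the $S$-action, and observe that the rescaling $v\mapsto\tau v$ produces exactly the weight $\tau^{2r}$ in degree $4r-1$ while the anomalous linear term lives only in degree $3$ and is killed by the hypothesis $c_3(E_{\mathbf C},g,d)=0$ since ${\rm Tr}[g^{-1}dg\,R_u]$ is a multiple of ${\rm Tr}[(g^{-1}dg)^3]$. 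Your isolation of that degree-$3$ term as the sole role of the $c_3=0$ hypothesis is the right key observation; the only points left implicit (and harmless) are holomorphy of the coefficients $a_{2r-1}(\tau)$ at the cusp, which follows from the $q$-expansions, and the fact that the $u$-integration does not interfere with any of the degree bookkeeping.
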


Let $g: M\to SO(N)$ and $N$ is even and large enough.
Let $E$ denote the trivial real vector bundle of rank $N$ over $M$.
Define a modular form on $\mathrm{SL}(2,\mathbf{Z})$ twisted by $E_8$ bundles as
\begin{align}
Q(\nabla^{TM},P_{i},g,d,\tau)=&\{e^{\frac{1}{24}E_{2}(\tau)\cdot\frac{1}{30}c_{2}(W_{i})} \widehat{A}(TM,\nabla^{TM}){\rm ch}([\triangle(M)\otimes \Theta_1(T_{C}M)\\
&+2^{2k-1}\Theta_2(T_{C}M)+2^{2k-1}\Theta_3(T_{C}M)]){\rm ch}(Q(E),g^{Q(E)},d,\tau)\nonumber\\
&\cdot\varphi(\tau)^{(8)}ch(\mathcal{V}_{i})\}^{(4k-1)}.\nonumber
\end{align}
Then we have the following theorem.

\begin{thm}
Let ${\rm dim}M=4k-1,$ for $k\leq 4.$ Suppose $g$ has a lift to the spin group ${\rm Spin}(N):g^{\Delta}:M\to {\rm Spin}(N)$. If $c_3(E,g,d)=0$, then $Q(\nabla^{TM},P_{i},g,d,\tau)$ is a modular form over $SL(2,{\bf Z})$ with the weight $2k+4$.
\end{thm}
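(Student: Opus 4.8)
The plan is to verify directly the two transformation laws defining a modular form over $SL(2,\mathbf{Z})$ on the generators $S$ and $T$, since these generate the group. Before taking the degree $(4k-1)$ component, I factor the integrand of $Q(\nabla^{TM},P_i,g,d,\tau)$ into four blocks: the Eisenstein exponential $\Psi(\tau)=e^{\frac{1}{24}E_2(\tau)\cdot\frac{1}{30}c_2(W_i)}$, the tangential elliptic form $\Lambda(\tau)=\widehat{A}(TM,\nabla^{TM}){\rm ch}(\triangle(M)\otimes\Theta_1(T_{\mathbf{C}}M)+2^{2k-1}\Theta_2(T_{\mathbf{C}}M)+2^{2k-1}\Theta_3(T_{\mathbf{C}}M))$, the $E_8$ theta form $\Phi(\tau)=\varphi(\tau)^{(8)}{\rm ch}(\mathcal{V}_i)$, and the odd Chern character ${\rm ch}(Q(E),g^{Q(E)},d,\tau)$. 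The factors $\Psi,\Lambda,\Phi$ live in even degrees, while by the preceding lemma (from \cite{GWL,HY}) the odd Chern character contributes only in degrees $4r-1$; thus the degree $(4k-1)$ part is a finite sum, over $r\geq 1$, of products of an even part of degree $4(k-r)$ with an odd part of degree $4r-1$. The odd factor therefore supplies the odd cohomological degree forced by $\dim M=4k-1$, and under the hypotheses $c_3(E,g,d)=0$ and the existence of the spin lift $g^{\Delta}$ its degree-$(4r-1)$ part is $T$-invariant and is multiplied by $\tau^{2r}$ under $S$. It remains to analyse $\Psi\cdot\Lambda\cdot\Phi$.

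For $\Lambda$ I would use the classical theta-function identities expressing $\widehat{A}(TM){\rm ch}(\triangle(M)\otimes\Theta_1)$, $\widehat{A}(TM){\rm ch}(\Theta_2)$ and $\widehat{A}(TM){\rm ch}(\Theta_3)$ as products $\prod_j x_j\frac{\theta'(0,\tau)}{\theta(x_j,\tau)}\frac{\theta_a(x_j,\tau)}{\theta_a(0,\tau)}$ over the formal Chern roots $\{x_j\}$ of $TM$, for $a=1,2,3$ respectively. The point special to odd dimensions is that the spinor bundle of a $(4k-1)$-manifold has rank $2^{2k-1}$, so the leading coefficient of $\widehat{A}(TM){\rm ch}(\triangle(M)\otimes\Theta_1)$ is $2^{2k-1}$, matching exactly the coefficients $2^{2k-1}$ placed on $\Theta_2,\Theta_3$; consequently $\Lambda$ is, up to the common factor $2^{2k-1}$, the fully symmetric combination $P_1+Q_2+Q_3$ of the three normalized products. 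From the transformation laws of the theta functions recalled in Section 2, within each factor the anomaly $e^{\pi\sqrt{-1}\tau x_j^2}$ coming from $\theta(x_j,\tau)$ cancels against the identical anomaly of $\theta_a(x_j,\tau)$, so $\Lambda$ carries no exponential anomaly; moreover $T$ fixes $P_1$ and interchanges $Q_2,Q_3$, while $S$ sends $P_1\mapsto Q_2$, $Q_2\mapsto P_1$, $Q_3\mapsto Q_3$ and rescales $x_j\mapsto\tau x_j$. Hence the symmetric sum is invariant under both, and its degree-$4(k-r)$ part is multiplied by $\tau^{2(k-r)}$ under $S$.

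The pair $\Psi\cdot\Phi$ is the crux. By the identity recorded for $\varphi(\tau)^{(8)}{\rm ch}(\mathcal{V}_i)$, the form $\Phi$ is $\frac{1}{2}(\prod_{l=1}^{8}\theta_1(y_l^i,\tau)+\prod_{l=1}^{8}\theta_2(y_l^i,\tau)+\prod_{l=1}^{8}\theta_3(y_l^i,\tau))$, whose degree-zero part is a weight-$4$ modular form. Under $T$ it is invariant, since $\prod_{l=1}^{8}\theta_1$ acquires the factor $e^{8\pi\sqrt{-1}/4}=1$ and $\prod\theta_2\leftrightarrow\prod\theta_3$. Under $S$ the eight theta factors produce $(\tau/\sqrt{-1})^{4}=\tau^{4}$, the rescaling $y_l^i\mapsto\tau y_l^i$, and the anomaly $\exp(\pi\sqrt{-1}\tau\sum_l(y_l^i)^2)$, the symmetry preserving the combination. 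Using $\sum_l(2\pi\sqrt{-1}y_l^i)^2=-\frac{1}{30}c_2(W_i)$ this anomaly equals $\exp(\frac{\sqrt{-1}\tau}{4\pi}\cdot\frac{1}{30}c_2(W_i))$; on the other hand the quasimodularity $E_2(-\frac{1}{\tau})=\tau^2E_2(\tau)-\frac{6\sqrt{-1}\tau}{\pi}$ makes $\Psi(-\frac{1}{\tau})$ equal to $\exp(\frac{1}{24}\tau^2E_2(\tau)\cdot\frac{1}{30}c_2(W_i))$ times the reciprocal anomaly $\exp(-\frac{\sqrt{-1}\tau}{4\pi}\cdot\frac{1}{30}c_2(W_i))$. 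The two anomalies cancel identically, so $\Psi\cdot\Phi$ is anomaly-free and $T$-invariant, and its degree-$4n$ part transforms under $S$ with weight $4+2n$. This is where the hypothesis $k\leq 4$ enters: since the odd factor has degree at least $3$, the $E_8$ form $\Phi$ occurs only in degrees $\leq 4(k-1)\leq 12<16$, and below degree $16$ the rational cohomology of $BE_8$ is generated by a single degree-$4$ class; thus every $E_8$ characteristic form appearing is a polynomial in $c_2(W_i)$ determined by the relation above, which is exactly what makes the single exponential $\Psi$ the correct and sufficient anomaly-cancelling factor.

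Finally I assemble the blocks. On the degree-$(4k-1)$ component each summand is a product of a degree-$4n$ part of $\Psi\Phi$, a degree-$4m$ part of $\Lambda$, and a degree-$(4r-1)$ part of the odd Chern character, with $n+m+r=k$ and $r\geq 1$. Invariance under $T$ of each block gives $Q(\nabla^{TM},P_i,g,d,\tau+1)=Q(\nabla^{TM},P_i,g,d,\tau)$ with trivial character, and under $S$ the weights of the three blocks add to $(4+2n)+2m+2r=2k+4$, independently of the splitting, so that $Q(\nabla^{TM},P_i,g,d,-\frac{1}{\tau})=\tau^{2k+4}Q(\nabla^{TM},P_i,g,d,\tau)$; holomorphicity on $\mathbf{H}$ and at the cusp follows from the $q$-expansions of the theta functions, of $E_2$, and of the bundles $\mathcal{V}_i$. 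The main obstacle is the $S$-transformation: concretely, the exact cancellation of the $E_8$ theta-anomaly against the $E_2$-anomaly through the relation $\sum_l(2\pi\sqrt{-1}y_l^i)^2=-\frac{1}{30}c_2(W_i)$, carried out simultaneously with the argument rescalings $x_j\mapsto\tau x_j$ and $y_l^i\mapsto\tau y_l^i$ and the degree-by-degree weight count that yields $2k+4$.
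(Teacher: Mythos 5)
Your proposal is correct and follows essentially the same route as the paper: split off the odd Chern character factor (handled by Lemma 3.1), rewrite the remaining even part as the symmetric theta-quotient product times the $E_{8}$ theta form, and check the $S$ and $T$ transformations degree by degree, with the $E_{2}$-exponential cancelling the theta anomaly via $\sum_{l}(2\pi\sqrt{-1}y_{l}^{i})^{2}=-\tfrac{1}{30}c_{2}(W_{i})$. You in fact spell out the anomaly cancellation and the weight bookkeeping more explicitly than the paper, which compresses this into a citation of (2.16)--(2.28).
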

\begin{proof}
Set
\begin{align}
Q(\nabla^{TM},P_{i},g,d,\tau)=&Q(M,P_{i},\tau)\cdot{\rm ch}(Q(E),g^{Q(E)},d,\tau)^{(4r-1)},
\end{align}
from this
\begin{align}
Q(M,P_{i},\tau)=&\{e^{\frac{1}{24}E_{2}(\tau)\cdot\frac{1}{30}c_{2}(W_{i})} \widehat{A}(TM,\nabla^{TM}){\rm ch}([\triangle(M)\otimes \Theta_1(T_{C}M)\\
&+2^{2k-1}\Theta_2(T_{C}M)+2^{2k-1}\Theta_3(T_{C}M)])\varphi(\tau)^{(8)}ch(\mathcal{V}_{i})\}^{(4p)}.\nonumber
\end{align}
Let $\{\pm 2\pi\sqrt{-1}x_j\}~(1\leq j \leq 2k-1)$ be the formal Chern roots for $T_\mathbf{C}M$, then we have
\begin{align}
&{\widehat{A}(TM,\nabla^{TM})}{\rm ch}(\triangle(M)){\rm ch}(\Theta_1(T_\mathbf{C}M))=\prod_{j=1}^{2k-1}\frac{2 x_j\theta'(0,\tau)}{\theta(x_j,\tau)}\cdot\frac{\theta_1(x_j,\tau)}{\theta_1(0,\tau)},\\
&{\widehat{A}(TM,\nabla^{TM})}{\rm ch}(2^{2k-1}\Theta_2(T_\mathbf{C}M))=\prod_{j=1}^{2k-1}\frac{2 x_j\theta'(0,\tau)}{\theta(x_j,\tau)}\cdot\frac{\theta_2(x_j,\tau)}{\theta_2(0,\tau)},\\
&{\widehat{A}(TM,\nabla^{TM})}{\rm ch}(2^{2k-1}\Theta_3(T_\mathbf{C}M))=\prod_{j=1}^{2k-1}\frac{2 x_j\theta'(0,\tau)}{\theta(x_j,\tau)}\cdot\frac{\theta_3(x_j,\tau)}{\theta_3(0,\tau)}.
\end{align}
Hence
\begin{align}
Q(M,P_{i},\tau) = & \left\{ e^{\frac{1}{24}E_{2}(\tau)\cdot\frac{1}{30}c_{2}(W_{i})} \cdot
\prod_{j=1}^{2k-1}\frac{2 x_j\theta'(0,\tau)}{\theta(x_j,\tau)}
\Big(
\frac{\theta_1(x_j,\tau)}{\theta_1(0,\tau)}
+ \frac{\theta_2(x_j,\tau)}{\theta_2(0,\tau)}
+\frac{\theta_3(x_j,\tau)}{\theta_3(0,\tau)}
\Big)\right.  \\
& \left. \cdot \frac{1}{2}\Big(\prod_{l=1}^{8}\theta_{1}(y_{l}^{i}, \tau)+\prod_{l=1}^{8}\theta_{2}(y_{l}^{i}, \tau)+\prod_{l=1}^{8}\theta_{3}(y_{l}^{i}, \tau)\Big) \vphantom{\prod_{j=1}^{2k-1}} \right\}^{(4p)}.\nonumber
\end{align}
From (2.16)-(2.28), we deduce that
\begin{align}
&Q(M, P_{i},\tau+1) =Q(M, \tau),\\
&Q\Big(M, P_{i},-\frac{1}{\tau}\Big) =\tau^{2p+4}Q(M, \tau).
\end{align}
According to the above Lemma, we have
\begin{align}
&Q(\nabla^{TM},P_{i},g,d,\tau+1)=Q(\nabla^{TM},P_{i},g,d,\tau),\\
&Q\Big(\nabla^{TM},P_{i},g,d,-\frac{1}{\tau}\Big) =\tau^{2r+2p+4}Q(\nabla^{TM},P_{i},g,d,\tau)=\tau^{2k+4}Q(\nabla^{TM},P_{i},g,d,\tau),
\end{align}
where $c_3(E_C,g,d)=0.$
So $Q(\nabla^{TM},P_{i},g,d,\tau)$ is a modular form over $\mathrm{SL}(2,\mathbf{Z})$ with the weight $2r+2p+4=2k+4$.
\end{proof}

\begin{thm}
For $7$-dimensional spin manifold, when $c_3(E,g,d)=0,$ we conclude that
  \begin{equation}
   \begin{aligned}
    &\{ e^{\frac{1}{720}c_{2}(W_{i})}{\widehat{A}(TM,\nabla^{TM})}[{\rm ch}(\triangle(M)\otimes 2\widetilde{T_{\mathbf{C}}M}){\rm ch}(\triangle(E),g^{\triangle(E)},d)+{\rm ch}(\triangle(M))\\
    &\cdot{\rm ch}(\Delta(E)\otimes(2\wedge^2\widetilde{E_\mathbf{C}}-\widetilde{E_\mathbf{C}}
    \otimes\widetilde{E_\mathbf{C}}+\widetilde{E_\mathbf{C}}),g,d)]+8e^{\frac{1}{720}c_{2}(W_{i})}\widehat{A}(TM,\nabla^{TM})\\
    &\cdot[{\rm ch}(\widetilde{T_{\mathbf{C}}M}+\wedge^2\widetilde{T_{\mathbf{C}}M}){\rm ch}(\triangle(E),g^{\triangle(E)},d)+{\rm ch}(\Delta(E)\otimes(2\wedge^2\widetilde{E_\mathbf{C}}-\widetilde{E_\mathbf{C}}\\
    & \otimes\widetilde{E_\mathbf{C}}+\widetilde{E_\mathbf{C}}),g,d)]+e^{\frac{1}{720}c_{2}(W_{i})}[{\rm ch}(-8+W_i)-\tfrac{1}{30}c_{2}(W_{i})][{\widehat{A}(TM,\nabla^{TM})}\\
    &\cdot{\rm ch}(\triangle(M)){\rm ch}(\triangle(E),g^{\triangle(E)},d)+8{\widehat{A}(TM,\nabla^{TM})}{\rm ch}(\triangle(E),g^{\triangle(E)},d)]\}^{(7)}\\
  &=480 \{ e^{\frac{1}{720}c_{2}(W_{i})}[{\widehat{A}(TM,\nabla^{TM})}{\rm ch}(\triangle(M)){\rm ch}(\triangle(E),g^{\triangle(E)},d) +8{\widehat{A}(TM,\nabla^{TM})}\\
    &\cdot{\rm ch}(\triangle(E),g^{\triangle(E)},d)]\}^{(7)},
   \end{aligned}
  \end{equation}
  and
  \begin{equation}
    \begin{aligned}
&\{e^{\frac{1}{720}c_{2}(W_{i})}\widehat{A}(TM,\nabla^{TM})[{\rm ch}(\triangle(M)\otimes(2\widetilde{T_\mathbf{C}M}+\wedge^2\widetilde{T_\mathbf{C}M}
+\widetilde{T_\mathbf{C}M}\otimes\widetilde{T_\mathbf{C}M}\\
&+S^2\widetilde{T_\mathbf{C}M})){\rm ch}(\triangle(E),g^{\triangle(E)},d)+{\rm ch}(\triangle(M)\otimes2\widetilde{T_{\mathbf{C}}M}){\rm ch}(\triangle(E)\otimes(2\wedge^2\widetilde{E_\mathbf{C}}\\
&-\widetilde{E_\mathbf{C}}\otimes\widetilde{E_\mathbf{C}}+\widetilde{E_\mathbf{C}}),g,d)+{\rm ch}(\triangle(M)){\rm ch}(\triangle(E)\otimes(\wedge^2\widetilde{E_\mathbf{C}}\otimes\wedge^2\widetilde{E_\mathbf{C}}
+2\wedge^4\widetilde{E_\mathbf{C}}\\
&-2\widetilde{E_\mathbf{C}}\otimes\wedge^3\widetilde{E_\mathbf{C}}
+2\widetilde{E_\mathbf{C}}\otimes\wedge^2\widetilde{E_\mathbf{C}}-\widetilde{E_\mathbf{C}}\otimes\widetilde{E_\mathbf{C}}\otimes\widetilde{E_\mathbf{C}}
+\widetilde{E_\mathbf{C}}+\wedge^2\widetilde{E_\mathbf{C}}),g,d)]\\
&+8e^{\frac{1}{720}c_{2}(W_{i})}\widehat{A}(TM,\nabla^{TM})[{\rm ch}(\wedge^4\widetilde{T_\mathbf{C}M}+\wedge^2\widetilde{T_\mathbf{C}M}\otimes\widetilde{T_\mathbf{C}M}+\widetilde{T_\mathbf{C}M}\otimes\widetilde{T_\mathbf{C}M}\\
&+S^2\widetilde{T_\mathbf{C}M}+\widetilde{T_\mathbf{C}M}){\rm ch}(\triangle(E),g^{\triangle(E)},d)
+{\rm ch}(\widetilde{T_{\mathbf{C}}M}+\wedge^2\widetilde{T_{\mathbf{C}}M}){\rm ch}(\triangle(E)\\
&\otimes(2\wedge^2\widetilde{E_\mathbf{C}}-\widetilde
{E_\mathbf{C}}\otimes\widetilde{E_\mathbf{C}}+\widetilde{E_\mathbf{C}}),g,d)
+{\rm ch}(\triangle(E)\otimes(\wedge^2\widetilde{E_\mathbf{C}}\otimes\wedge^2
\widetilde{E_\mathbf{C}}
+2\wedge^4\widetilde{E_\mathbf{C}}\\
&-2\widetilde{E_\mathbf{C}}\otimes\wedge^3
\widetilde{E_\mathbf{C}}
+2\widetilde{E_\mathbf{C}}\otimes\wedge^2\widetilde{E_\mathbf{C}}
-\widetilde{E_\mathbf{C}}\otimes\widetilde{E_\mathbf{C}}\otimes\widetilde{E_\mathbf{C}}
+\widetilde{E_\mathbf{C}}+\wedge^2\widetilde{E_\mathbf{C}}),g,d)]\\
&+e^{\frac{1}{720}c_{2}(W_{i})}[{\rm ch}(-8+W_i)-\tfrac{1}{30}c_{2}(W_{i})][\widehat{A}(TM,\nabla^{TM}){\rm ch}(\triangle(M)\otimes 2\widetilde{T_{\mathbf{C}}M})\\
&\cdot{\rm ch}(\triangle(E),g^{\triangle(E)},d)+\widehat{A}(TM,\nabla^{TM}){\rm ch}(\triangle(M)){\rm ch}(\Delta(E)\otimes(2\wedge^2\widetilde{E_\mathbf{C}}-\widetilde{E_\mathbf{C}}\\
    &\otimes\widetilde{E_\mathbf{C}}+\widetilde{E_\mathbf{C}}),g,d)+8\widehat{A}(TM,\nabla^{TM}){\rm ch}(\widetilde{T_{\mathbf{C}}M}+\wedge^2\widetilde{T_{\mathbf{C}}M}){\rm ch}(\triangle(E),g^{\triangle(E)},d)\\
    &+8\widehat{A}(TM,\nabla^{TM}){\rm ch}(\Delta(E)\otimes(2\wedge^2\widetilde{E_\mathbf{C}}-\widetilde{E_\mathbf{C}}
    \otimes\widetilde{E_\mathbf{C}}+\widetilde{E_\mathbf{C}}),g,d)]+e^{\frac{1}{720}c_{2}(W_{i})}\\
    &\cdot[{\rm ch}(20-8W_i+\overline{W_{i}})-\tfrac{1}{30}c_{2}(W_{i}){\rm ch}(-8+W_i)+\tfrac{1}{2}(-6+\tfrac{1}{30}c_{2}(W_{i}))\cdot\tfrac{1}{30}c_{2}(W_{i})]\\
    &\cdot[{\widehat{A}(TM,\nabla^{TM})}{\rm ch}(\triangle(M)){\rm ch}(\triangle(E),g^{\triangle(E)},d) +8{\widehat{A}(TM,\nabla^{TM})}\\
    &\cdot{\rm ch}(\triangle(E),g^{\triangle(E)},d)]\}^{(7)}=61920 \{ e^{\frac{1}{720}c_{2}(W_{i})}[{\widehat{A}(TM,\nabla^{TM})}{\rm ch}(\triangle(M))\\
&\cdot{\rm ch}(\triangle(E),g^{\triangle(E)},d) +8{\widehat{A}(TM,\nabla^{TM})}{\rm ch}(\triangle(E),g^{\triangle(E)},d)]\}^{(7)}.
    \end{aligned}
  \end{equation}
\end{thm}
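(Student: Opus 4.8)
The strategy is to read the two displayed identities as the constraints that the one-dimensionality of the relevant space of modular forms imposes on the leading Fourier coefficients of a single modular form, and then to make those coefficients explicit by a $q$-expansion. First I would specialize the preceding theorem to $\dim M=7$, i.e. $k=2$ (which satisfies $k\leq4$): under the standing hypotheses that $g$ lifts to ${\rm Spin}(N)$ and $c_{3}(E,g,d)=0$, it gives that $Q(\nabla^{TM},P_{i},g,d,\tau)$ is a modular form over $SL(2,\mathbf{Z})$ of weight $2k+4=8$ whose $q$-coefficients are degree-$7$ characteristic forms on $M$ built from the odd Chern character of $(E,g,d)$. Since the graded ring of modular forms over $SL(2,\mathbf{Z})$ is $\mathbf{C}[E_{4},E_{6}]$ and there are no cusp forms of weight below $12$, the weight-$8$ space is one-dimensional, spanned by $E_{4}^{2}=E_{8}$, with
\begin{equation}
E_{4}(\tau)^{2}=1+480\sum_{n\geq1}\sigma_{7}(n)q^{n}=1+480q+61920q^{2}+\cdots,\nonumber
\end{equation}
since $\sigma_{7}(1)=1$ and $\sigma_{7}(2)=1+2^{7}=129$, so $480\cdot129=61920$.

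Writing $Q(\nabla^{TM},P_{i},g,d,\tau)=\sum_{n\geq0}h_{n}q^{n}$ with each $h_{n}$ a degree-$7$ form, one-dimensionality forces $Q(\nabla^{TM},P_{i},g,d,\tau)=h_{0}\,E_{4}(\tau)^{2}$, whence
\begin{equation}
h_{1}=480\,h_{0},\qquad h_{2}=61920\,h_{0}.\nonumber
\end{equation}
The $q$-expansion below will identify $h_{0}$ with $\{e^{\frac{1}{720}c_{2}(W_{i})}[\widehat{A}(TM,\nabla^{TM}){\rm ch}(\triangle(M)){\rm ch}(\triangle(E),g^{\triangle(E)},d)+8\widehat{A}(TM,\nabla^{TM}){\rm ch}(\triangle(E),g^{\triangle(E)},d)]\}^{(7)}$, so the first asserted identity is exactly $h_{1}=480\,h_{0}$ and the second is $h_{2}=61920\,h_{0}$; it therefore remains only to compute $h_{1}$ and $h_{2}$.

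To this end I would expand the four tensor factors of $Q(\nabla^{TM},P_{i},g,d,\tau)$ to order $q^{2}$ and collect the degree-$7$ part of the product: (a) the Eisenstein prefactor, using $E_{2}=1-24q-72q^{2}+\cdots$, as $e^{\frac{1}{720}c_{2}(W_{i})}\big(1-\tfrac{1}{30}c_{2}(W_{i})q+(-\tfrac{1}{10}c_{2}(W_{i})+\tfrac{1}{1800}c_{2}(W_{i})^{2})q^{2}+\cdots\big)$; (b) the factor $\widehat{A}(TM,\nabla^{TM}){\rm ch}(\triangle(M)\otimes\Theta_{1}+2^{3}\Theta_{2}+2^{3}\Theta_{3})$, via the theta-quotient identities established in the preceding proof together with the definitions of $S_{t},\wedge_{t}$, which produces the $\widetilde{T_{\mathbf{C}}M}$-polynomials appearing in the statement (for instance the $q^{2}$-coefficient of $\Theta_{1}$ is $2\widetilde{T_{\mathbf{C}}M}+\wedge^{2}\widetilde{T_{\mathbf{C}}M}+\widetilde{T_{\mathbf{C}}M}\otimes\widetilde{T_{\mathbf{C}}M}+S^{2}\widetilde{T_{\mathbf{C}}M}$); (c) the odd Chern character ${\rm ch}(Q(E),g^{Q(E)},d,\tau)$, which by its transgression definition is linear in $Q(E)=Q_{1}(E)\otimes Q_{2}(E)\otimes Q_{3}(E)$ and hence equals $\sum_{n}{\rm ch}((Q(E))|_{q^{n}},g,d)q^{n}$, with $q^{0}$-term ${\rm ch}(\triangle(E),g^{\triangle(E)},d)$ and $q^{1}$-term ${\rm ch}(\triangle(E)\otimes(2\wedge^{2}\widetilde{E_{\mathbf{C}}}-\widetilde{E_{\mathbf{C}}}\otimes\widetilde{E_{\mathbf{C}}}+\widetilde{E_{\mathbf{C}}}),g,d)$; and (d) the $E_{8}$-bundle factor, using $\varphi(\tau)^{8}=1-8q+20q^{2}+\cdots$, $\mathcal{V}_{i}=\mathbf{C}+W_{i}q+\overline{W_{i}}q^{2}+\cdots$ and $\sum_{l}(2\pi\sqrt{-1}y_{l}^{i})^{2}=-\tfrac{1}{30}c_{2}(W_{i})$, which gives $\varphi(\tau)^{8}{\rm ch}(\mathcal{V}_{i})=1+{\rm ch}(-8+W_{i})q+{\rm ch}(20-8W_{i}+\overline{W_{i}})q^{2}+\cdots$. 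Multiplying (a)--(d) and extracting the degree-$7$ part at orders $q^{1}$ and $q^{2}$ then yields $h_{1}$ and $h_{2}$, which are the two bracketed expressions on the left-hand sides of the statement.

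The main obstacle I anticipate is the order-$q^{2}$ bookkeeping rather than any conceptual point: one must retain every cross term among the four factors, carry the odd transgression Chern character through its degree-$3$ and degree-$7$ components so that the $h_{n}$ emerge as genuine degree-$7$ forms, and control the half-integer powers of $q$ entering $\Theta_{2},\Theta_{3}$ and $Q_{2}(E),Q_{3}(E)$, which must cancel in pairs for the expansion to be an honest power series in $q$. Each individual expansion is mechanical, but assembling the precise second identity at order $q^{2}$---with its $\overline{W_{i}}$, ${\rm ch}(-8+W_{i})$ and $c_{2}(W_{i})^{2}$ contributions---is where essentially all of the computational effort is concentrated.
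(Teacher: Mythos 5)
Your proposal is correct and follows essentially the same route as the paper: expand the four factors of $Q(\nabla^{TM},P_{i},g,d,\tau)$ to order $q^{2}$, invoke Theorem 3.2 to see it is a weight-$8$ modular form over $SL(2,\mathbf{Z})$, hence a multiple of $E_{4}^{2}=1+480q+61920q^{2}+\cdots$, and compare the $q$ and $q^{2}$ coefficients. Your explicit appeal to the one-dimensionality of the weight-$8$ space (no cusp forms below weight $12$) is a slightly more careful justification of the step the paper simply asserts, but it is the same argument.
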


\begin{proof}
A direct computation shows that
\begin{equation}
\begin{aligned}
e^{\frac{1}{24}E_{2}(\tau)\cdot\frac{1}{30}c_{2}(W_{i})}=&e^{\frac{1}{720}c_{2}(W_{i})}-qe^{\frac{1}{720}c_{2}(W_{i})}\cdot\tfrac{1}{30}c_{2}(W_{i})\\
&+q^2e^{\frac{1}{720}c_{2}(W_{i})}\cdot\tfrac{1}{2}(-6+\tfrac{1}{30}c_{2}(W_{i}))\cdot\tfrac{1}{30}c_{2}(W_{i})+\cdot\cdot\cdot,
\end{aligned}
\end{equation}
\begin{equation}
\begin{aligned}
&\widehat{A}(TM,\nabla^{TM}){\rm ch}([\triangle(M)\otimes \Theta_1(T_{C}M)+2^{2k-1}\Theta_2(T_{C}M)+2^{2k-1}\Theta_3(T_{C}M)])\\
=&{\widehat{A}(TM,\nabla^{TM})}{\rm ch}(\triangle(M)){\rm ch}[1+q\cdot2\widetilde{T_{\mathbf{C}}M}+q^2(2\widetilde{T_{\mathbf{C}}M}+\wedge^2\widetilde{T_\mathbf{C}M}+\widetilde{T_\mathbf{C}M}\\
&\otimes\widetilde{T_\mathbf{C}M})+S^2\widetilde{T_\mathbf{C}M}]+2^{2k-1}{\widehat{A}(TM,\nabla^{TM})}{\rm ch}[1+q(\widetilde{T_{\mathbf{C}}M}+\wedge^2\widetilde{T_{\mathbf{C}}M})\\
&+q^2(\wedge^4\widetilde{T_\mathbf{C}M}+\wedge^2\widetilde{T_\mathbf{C}M}\otimes\widetilde{T_\mathbf{C}M}+\widetilde{T_\mathbf{C}M}\otimes\widetilde{T_\mathbf{C}M}+S^2\widetilde{T_\mathbf{C}M}+\widetilde{T_\mathbf{C}M})]+\cdot\cdot\cdot,
\end{aligned}
\end{equation}
\begin{equation}
\begin{aligned}
{\rm ch}(Q(E),g^{Q(E)},d,\tau)=&{\rm ch}(\triangle(E),g^{\triangle(E)},d)+q{\rm ch}(\triangle(E)\otimes(2\wedge^2\widetilde{E_\mathbf{C}}-\widetilde{E_\mathbf{C}}\otimes\widetilde{E_\mathbf{C}}\\
&+\widetilde{E_\mathbf{C}}),g,d)+q^2{\rm ch}(\triangle(E)\otimes(\wedge^2\widetilde{E_\mathbf{C}}\otimes\wedge^2\widetilde{E_\mathbf{C}}+2\wedge^4\widetilde
{E_\mathbf{C}}\\
&-2\widetilde{E_\mathbf{C}}\otimes\wedge^3\widetilde{E_\mathbf{C}}+2\widetilde
{E_\mathbf{C}}\otimes\wedge^2\widetilde{E_\mathbf{C}}
-\widetilde{E_\mathbf{C}}\otimes\widetilde
{E_\mathbf{C}}\otimes\widetilde{E_\mathbf{C}}\\
&+\widetilde{E_\mathbf{C}}+\wedge^2\widetilde{E_\mathbf{C}}),g,d)+\cdot\cdot\cdot,
\end{aligned}
\end{equation}
and
\begin{equation}
\begin{aligned}
\varphi(\tau)^{(8)}ch(\mathcal{V}_{i})=1+q{\rm ch}(-8+W_i)+q^2{\rm ch}(20-8W_i+\overline{W_{i}})+\cdot\cdot\cdot.
\end{aligned}
\end{equation}
We thus get
\begin{equation}
\begin{aligned}
&Q(\nabla^{TM},P_{i},g,d,\tau)\\
=&\{ e^{\frac{1}{720}c_{2}(W_{i})}[{\widehat{A}(TM,\nabla^{TM})}{\rm ch}(\triangle(M)){\rm ch}(\triangle(E),g^{\triangle(E)},d) +2^{2k-1}{\widehat{A}(TM,\nabla^{TM})}\\
    &\cdot{\rm ch}(\triangle(E),g^{\triangle(E)},d)]\}^{(4k-1)}+q \{ e^{\frac{1}{720}c_{2}(W_{i})}{\widehat{A}(TM,\nabla^{TM})}[{\rm ch}(\triangle(M)\otimes2\widetilde{T_{\mathbf{C}}M})\\
    &\cdot{\rm ch}(\triangle(E),g^{\triangle(E)},d)+{\rm ch}(\triangle(M)){\rm ch}(\Delta(E)\otimes(2\wedge^2\widetilde{E_\mathbf{C}}-\widetilde{E_\mathbf{C}}
    \otimes\widetilde{E_\mathbf{C}}+\widetilde{E_\mathbf{C}})\\
    &,g,d)]+2^{2k-1}e^{\frac{1}{720}c_{2}(W_{i})}\widehat{A}(TM,\nabla^{TM})[{\rm ch}(\widetilde{T_{\mathbf{C}}M}+\wedge^2\widetilde{T_{\mathbf{C}}M}){\rm ch}(\triangle(E),g^{\triangle(E)},d)\\
    &+{\rm ch}(\Delta(E)\otimes(2\wedge^2\widetilde{E_\mathbf{C}}-\widetilde{E_\mathbf{C}}\otimes\widetilde{E_\mathbf{C}}+\widetilde{E_\mathbf{C}}),g,d)]+e^{\frac{1}{720}c_{2}(W_{i})}[{\rm ch}(-8+W_i)\\
    &-\tfrac{1}{30}c_{2}(W_{i})][{\widehat{A}(TM,\nabla^{TM})}{\rm ch}(\triangle(M)){\rm ch}(\triangle(E),g^{\triangle(E)},d)+2^{2k-1}{\widehat{A}(TM,\nabla^{TM})}\\
    &\cdot{\rm ch}(\triangle(E),g^{\triangle(E)},d)]\}^{(4k-1)}+q^2\{e^{\frac{1}{720}c_{2}(W_{i})}\widehat{A}(TM,\nabla^{TM})[{\rm ch}(\triangle(M)\\
    &\otimes(2\widetilde{T_\mathbf{C}M}+\wedge^2\widetilde{T_\mathbf{C}M}
+\widetilde{T_\mathbf{C}M}\otimes\widetilde{T_\mathbf{C}M}+S^2\widetilde{T_\mathbf{C}M})){\rm ch}(\triangle(E),g^{\triangle(E)},d)\\
&+{\rm ch}(\triangle(M)\otimes2\widetilde{T_{\mathbf{C}}M}){\rm ch}(\triangle(E)\otimes(2\wedge^2\widetilde{E_\mathbf{C}}-\widetilde{E_\mathbf{C}}\otimes\widetilde{E_\mathbf{C}}+\widetilde{E_\mathbf{C}}),g,d)\\
&+{\rm ch}(\triangle(M)){\rm ch}(\triangle(E)\otimes(\wedge^2\widetilde{E_\mathbf{C}}\otimes\wedge^2\widetilde{E_\mathbf{C}}
+2\wedge^4\widetilde{E_\mathbf{C}}-2\widetilde{E_\mathbf{C}}\otimes\wedge^3\widetilde{E_\mathbf{C}}\\
&+2\widetilde{E_\mathbf{C}}\otimes\wedge^2\widetilde{E_\mathbf{C}}-\widetilde{E_\mathbf{C}}\otimes\widetilde{E_\mathbf{C}}\otimes\widetilde{E_\mathbf{C}}
+\widetilde{E_\mathbf{C}}+\wedge^2\widetilde{E_\mathbf{C}}),g,d)]+2^{2k-1}e^{\frac{1}{720}c_{2}(W_{i})}\\
&\cdot \widehat{A}(TM,\nabla^{TM})[{\rm ch}(\wedge^4\widetilde{T_\mathbf{C}M}+\wedge^2\widetilde{T_\mathbf{C}M}\otimes\widetilde{T_\mathbf{C}M}+\widetilde{T_\mathbf{C}M}\otimes\widetilde{T_\mathbf{C}M}+S^2\widetilde{T_\mathbf{C}M}\\
&+\widetilde{T_\mathbf{C}M}){\rm ch}(\triangle(E),g^{\triangle(E)},d)
+{\rm ch}(\widetilde{T_{\mathbf{C}}M}+\wedge^2\widetilde{T_{\mathbf{C}}M}){\rm ch}(\triangle(E)\otimes(2\wedge^2\widetilde{E_\mathbf{C}}\\
&-\widetilde{E_\mathbf{C}}\otimes\widetilde{E_\mathbf{C}}+\widetilde{E_\mathbf{C}}),g,d)
+{\rm ch}(\triangle(E)\otimes(\wedge^2\widetilde{E_\mathbf{C}}\otimes\wedge^2\widetilde{E_\mathbf{C}}
+2\wedge^4\widetilde{E_\mathbf{C}}-2\widetilde{E_\mathbf{C}}   \end{aligned}
\end{equation}
\begin{equation}
\begin{aligned}
&\otimes\wedge^3\widetilde{E_\mathbf{C}}
+2\widetilde{E_\mathbf{C}}\otimes\wedge^2\widetilde{E_\mathbf{C}}
-\widetilde{E_\mathbf{C}}\otimes\widetilde{E_\mathbf{C}}\otimes\widetilde{E_\mathbf{C}}
+\widetilde{E_\mathbf{C}}+\wedge^2\widetilde{E_\mathbf{C}}),g,d)]\\
&+e^{\frac{1}{720}c_{2}(W_{i})}[{\rm ch}(-8+W_i)-\tfrac{1}{30}c_{2}(W_{i})][\widehat{A}(TM,\nabla^{TM}){\rm ch}(\triangle(M)\otimes 2\widetilde{T_{\mathbf{C}}M})\\
&\cdot{\rm ch}(\triangle(E),g^{\triangle(E)},d)+\widehat{A}(TM,\nabla^{TM}){\rm ch}(\triangle(M)){\rm ch}(\Delta(E)\otimes(2\wedge^2\widetilde{E_\mathbf{C}}-\widetilde{E_\mathbf{C}}\\
    &\otimes\widetilde{E_\mathbf{C}}+\widetilde{E_\mathbf{C}}),g,d)+2^{2k-1}\widehat{A}(TM,\nabla^{TM}){\rm ch}(\widetilde{T_{\mathbf{C}}M}+\wedge^2\widetilde{T_{\mathbf{C}}M}){\rm ch}(\triangle(E),g^{\triangle(E)},d)\\
    &+2^{2k-1}\widehat{A}(TM,\nabla^{TM}){\rm ch}(\Delta(E)\otimes(2\wedge^2\widetilde{E_\mathbf{C}}-\widetilde{E_\mathbf{C}}
    \otimes\widetilde{E_\mathbf{C}}+\widetilde{E_\mathbf{C}}),g,d)]+e^{\frac{1}{720}c_{2}(W_{i})}\\
    &\cdot[{\rm ch}(20-8W_i+\overline{W_{i}})-\tfrac{1}{30}c_{2}(W_{i}){\rm ch}(-8+W_i)+\tfrac{1}{2}(-6+\tfrac{1}{30}c_{2}(W_{i}))\cdot\tfrac{1}{30}c_{2}(W_{i})]\\
    &\cdot[{\widehat{A}(TM,\nabla^{TM})}{\rm ch}(\triangle(M)){\rm ch}(\triangle(E),g^{\triangle(E)},d) +2^{2k-1}{\widehat{A}(TM,\nabla^{TM})}\\
    &\cdot{\rm ch}(\triangle(E),g^{\triangle(E)},d)]\}^{(4k-1)}.\nonumber
\end{aligned}
\end{equation}

It is well known that modular forms over $\mathrm{SL}(2,\mathbf{Z})$ can be expressed as polynomials of the Einsentein series $E_{4}(\tau)$ and $E_{6}(\tau),$ where
\begin{align}
&E_{4}(\tau)=1+240q+2160q^{2}+6720q^{3}+\cdot\cdot\cdot,\\
&E_{6}(\tau)=1-504q-16632q^{2}-122976q^{3}+\cdot\cdot\cdot.
\end{align}
Their weights are $4$ and $6$ respectively.

When ${\rm dim}M=7$, then $Q(\nabla^{TM},P_{i},g,d,\tau)$ is a modular form over $SL(2,{\bf Z})$ with the weight $8$, it must be a multiple of
\begin{align}
E_{4}(\tau)^{2}=1+480q+61920q^{2}+\cdot\cdot\cdot.
\end{align}

Theorem 3.3 follows from a comparison of the coefficients of $q$ and $q^2$ in (3.37) and (3.40).
\end{proof}

\begin{thm}
For $11$-dimensional spin manifold, when $c_3(E,g,d)=0,$ we conclude that
  \begin{equation}
   \begin{aligned}
    &\{ e^{\frac{1}{720}c_{2}(W_{i})}{\widehat{A}(TM,\nabla^{TM})}[{\rm ch}(\triangle(M)\otimes 2\widetilde{T_{\mathbf{C}}M}){\rm ch}(\triangle(E),g^{\triangle(E)},d)+{\rm ch}(\triangle(M))\\
    &\cdot{\rm ch}(\Delta(E)\otimes(2\wedge^2\widetilde{E_\mathbf{C}}-\widetilde{E_\mathbf{C}}
    \otimes\widetilde{E_\mathbf{C}}+\widetilde{E_\mathbf{C}}),g,d)]+32e^{\frac{1}{720}c_{2}(W_{i})}\widehat{A}(TM,\nabla^{TM})\\
    &\cdot[{\rm ch}(\widetilde{T_{\mathbf{C}}M}+\wedge^2\widetilde{T_{\mathbf{C}}M}){\rm ch}(\triangle(E),g^{\triangle(E)},d)+{\rm ch}(\Delta(E)\otimes(2\wedge^2\widetilde{E_\mathbf{C}}-\widetilde{E_\mathbf{C}}\\
    & \otimes\widetilde{E_\mathbf{C}}+\widetilde{E_\mathbf{C}}),g,d)]+e^{\frac{1}{720}c_{2}(W_{i})}[{\rm ch}(-8+W_i)-\tfrac{1}{30}c_{2}(W_{i})][{\widehat{A}(TM,\nabla^{TM})}\\
    &\cdot{\rm ch}(\triangle(M)){\rm ch}(\triangle(E),g^{\triangle(E)},d)+32{\widehat{A}(TM,\nabla^{TM})}{\rm ch}(\triangle(E),g^{\triangle(E)},d)]\}^{(11)}\\
  &=-264\{ e^{\frac{1}{720}c_{2}(W_{i})}[{\widehat{A}(TM,\nabla^{TM})}{\rm ch}(\triangle(M)){\rm ch}(\triangle(E),g^{\triangle(E)},d) +32{\widehat{A}(TM,\nabla^{TM})}\\
    &\cdot{\rm ch}(\triangle(E),g^{\triangle(E)},d)]\}^{(11)},
   \end{aligned}
  \end{equation}
  and
  \begin{equation}
    \begin{aligned}
&\{e^{\frac{1}{720}c_{2}(W_{i})}\widehat{A}(TM,\nabla^{TM})[{\rm ch}(\triangle(M)\otimes(2\widetilde{T_\mathbf{C}M}+\wedge^2\widetilde{T_\mathbf{C}M}
+\widetilde{T_\mathbf{C}M}\otimes\widetilde{T_\mathbf{C}M}\\
&+S^2\widetilde{T_\mathbf{C}M})){\rm ch}(\triangle(E),g^{\triangle(E)},d)+{\rm ch}(\triangle(M)\otimes2\widetilde{T_{\mathbf{C}}M}){\rm ch}(\triangle(E)\otimes(2\wedge^2\widetilde{E_\mathbf{C}}\\
&-\widetilde{E_\mathbf{C}}\otimes\widetilde{E_\mathbf{C}}+\widetilde{E_\mathbf{C}}),g,d)+{\rm ch}(\triangle(M)){\rm ch}(\triangle(E)\otimes(\wedge^2\widetilde{E_\mathbf{C}}\otimes\wedge^2\widetilde{E_\mathbf{C}}
+2\wedge^4\widetilde{E_\mathbf{C}}\\
&-2\widetilde{E_\mathbf{C}}\otimes\wedge^3\widetilde{E_\mathbf{C}}
+2\widetilde{E_\mathbf{C}}\otimes\wedge^2\widetilde{E_\mathbf{C}}-\widetilde{E_\mathbf{C}}\otimes\widetilde{E_\mathbf{C}}\otimes\widetilde{E_\mathbf{C}}
+\widetilde{E_\mathbf{C}}+\wedge^2\widetilde{E_\mathbf{C}}),g,d)]\\
&+32e^{\frac{1}{720}c_{2}(W_{i})}\widehat{A}(TM,\nabla^{TM})[{\rm ch}(\wedge^4\widetilde{T_\mathbf{C}M}+\wedge^2\widetilde{T_\mathbf{C}M}\otimes\widetilde{T_\mathbf{C}M}+\widetilde{T_\mathbf{C}M}\otimes\widetilde{T_\mathbf{C}M}\\
&+S^2\widetilde{T_\mathbf{C}M}+\widetilde{T_\mathbf{C}M}){\rm ch}(\triangle(E),g^{\triangle(E)},d)
+{\rm ch}(\widetilde{T_{\mathbf{C}}M}+\wedge^2\widetilde{T_{\mathbf{C}}M}){\rm ch}(\triangle(E)\\
&\otimes(2\wedge^2\widetilde{E_\mathbf{C}}-\widetilde
{E_\mathbf{C}}\otimes\widetilde{E_\mathbf{C}}+\widetilde{E_\mathbf{C}}),g,d)
+{\rm ch}(\triangle(E)\otimes(\wedge^2\widetilde{E_\mathbf{C}}\otimes\wedge^2
\widetilde{E_\mathbf{C}}
+2\wedge^4\widetilde{E_\mathbf{C}}\\
&-2\widetilde{E_\mathbf{C}}\otimes\wedge^3
\widetilde{E_\mathbf{C}}
+2\widetilde{E_\mathbf{C}}\otimes\wedge^2\widetilde{E_\mathbf{C}}
-\widetilde{E_\mathbf{C}}\otimes\widetilde{E_\mathbf{C}}\otimes\widetilde{E_\mathbf{C}}
+\widetilde{E_\mathbf{C}}+\wedge^2\widetilde{E_\mathbf{C}}),g,d)]\\
&+e^{\frac{1}{720}c_{2}(W_{i})}[{\rm ch}(-8+W_i)-\tfrac{1}{30}c_{2}(W_{i})][\widehat{A}(TM,\nabla^{TM}){\rm ch}(\triangle(M)\otimes 2\widetilde{T_{\mathbf{C}}M})\\
&\cdot{\rm ch}(\triangle(E),g^{\triangle(E)},d)+\widehat{A}(TM,\nabla^{TM}){\rm ch}(\triangle(M)){\rm ch}(\Delta(E)\otimes(2\wedge^2\widetilde{E_\mathbf{C}}-\widetilde{E_\mathbf{C}}\\
    &\otimes\widetilde{E_\mathbf{C}}+\widetilde{E_\mathbf{C}}),g,d)+32\widehat{A}(TM,\nabla^{TM}){\rm ch}(\widetilde{T_{\mathbf{C}}M}+\wedge^2\widetilde{T_{\mathbf{C}}M}){\rm ch}(\triangle(E),g^{\triangle(E)},d)\\
    &+32\widehat{A}(TM,\nabla^{TM}){\rm ch}(\Delta(E)\otimes(2\wedge^2\widetilde{E_\mathbf{C}}-\widetilde{E_\mathbf{C}}
    \otimes\widetilde{E_\mathbf{C}}+\widetilde{E_\mathbf{C}}),g,d)]+e^{\frac{1}{720}c_{2}(W_{i})}\\
    &\cdot[{\rm ch}(20-8W_i+\overline{W_{i}})-\tfrac{1}{30}c_{2}(W_{i}){\rm ch}(-8+W_i)+\tfrac{1}{2}(-6+\tfrac{1}{30}c_{2}(W_{i}))\cdot\tfrac{1}{30}c_{2}(W_{i})]\\
    &\cdot[{\widehat{A}(TM,\nabla^{TM})}{\rm ch}(\triangle(M)){\rm ch}(\triangle(E),g^{\triangle(E)},d) +32{\widehat{A}(TM,\nabla^{TM})}\\
    &\cdot{\rm ch}(\triangle(E),g^{\triangle(E)},d)]\}^{(11)}=-135432 \{ e^{\frac{1}{720}c_{2}(W_{i})}[{\widehat{A}(TM,\nabla^{TM})}{\rm ch}(\triangle(M))\\
&\cdot{\rm ch}(\triangle(E),g^{\triangle(E)},d) +32{\widehat{A}(TM,\nabla^{TM})}{\rm ch}(\triangle(E),g^{\triangle(E)},d)]\}^{(11)}.
    \end{aligned}
  \end{equation}
\end{thm}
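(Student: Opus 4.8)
The plan is to repeat, for $\dim M=11$ (that is, $k=3$), the argument used to establish Theorem 3.3. First I would apply Theorem 3.2 with $k=3$: since $c_{3}(E,g,d)=0$ and $g$ admits a lift $g^{\triangle}\colon M\to\mathrm{Spin}(N)$, the characteristic form $Q(\nabla^{TM},P_{i},g,d,\tau)$ is a modular form over $SL(2,\mathbf{Z})$ of weight $2k+4=10$. The essential observation is that its $q$-expansion coincides with the one computed in the proof of Theorem 3.3, the sole change being that the overall factor $2^{2k-1}$ now equals $2^{5}=32$ in place of $8$. Reading off the coefficients of $q^{0}$, $q^{1}$ and $q^{2}$ in that expansion produces exactly the degree-$(11)$ characteristic-form expressions that occur on the left-hand sides of the two asserted identities, the constant ($q^{0}$) term being
\begin{align}
\{e^{\frac{1}{720}c_{2}(W_{i})}[\widehat{A}(TM,\nabla^{TM}){\rm ch}(\triangle(M)){\rm ch}(\triangle(E),g^{\triangle(E)},d)+32\widehat{A}(TM,\nabla^{TM}){\rm ch}(\triangle(E),g^{\triangle(E)},d)]\}^{(11)}.\nonumber
\end{align}

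Next I would invoke the structure of the ring of modular forms over $SL(2,\mathbf{Z})$. The space of modular forms of weight $10$ is one-dimensional and is spanned by $E_{4}(\tau)E_{6}(\tau)$. Using the expansions of $E_{4}$ and $E_{6}$ recalled in the proof of Theorem 3.3, a direct multiplication gives
\begin{align}
E_{4}(\tau)E_{6}(\tau)=1-264q-135432q^{2}+\cdots.\nonumber
\end{align}
Consequently $Q(\nabla^{TM},P_{i},g,d,\tau)$ is a constant multiple of $E_{4}E_{6}$, and this constant is determined by the $q^{0}$-coefficient displayed above.

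Finally I would equate the two $q$-series coefficient by coefficient. Matching the coefficients of $q^{1}$ yields the first identity, the numerical factor $-264$ being precisely the $q$-coefficient of $E_{4}E_{6}$; matching the coefficients of $q^{2}$ yields the second identity, with $-135432$ the $q^{2}$-coefficient of $E_{4}E_{6}$. The only genuinely structural input is the one-dimensionality of the weight-$10$ space together with the expansion of its generator; everything else duplicates the symbolic bookkeeping already carried out in the weight-$8$ case. I therefore expect the main obstacle to be purely computational, namely the careful extraction of the degree-$(11)$ components of the characteristic forms and the tracking of the numerical coefficients, with no new conceptual difficulty beyond that handled in Theorem 3.3.
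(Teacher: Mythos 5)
Your proposal is correct and follows the paper's own proof essentially verbatim: the paper likewise notes that for $k=3$ the form $Q(\nabla^{TM},P_{i},g,d,\tau)$ has weight $10$, hence is a multiple of $E_{4}(\tau)E_{6}(\tau)=1-264q-135432q^{2}+\cdots$, and obtains both identities by comparing the constant term with the coefficients of $q$ and $q^{2}$ in the expansion already computed in the proof of Theorem 3.3 (with $2^{2k-1}=32$). No further comment is needed.
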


\begin{proof}
When $k=3$, $Q(\nabla^{TM}, P_{i}, g, d, \tau)$ is a modular form on $\mathrm{SL}(2,\mathbf{Z})$ of weight $10.$
Clearly,
\begin{align}
E_{4}(\tau)E_{6}(\tau)=1-264q-135432q^{2}+\cdot\cdot\cdot.
\end{align}

By comparing the constant term and the coefficients of $q,$~$q^{2}$  we can get Theorem 3.4.
\end{proof}

\begin{thm}
For $15$-dimensional spin manifold, when $c_3(E,g,d)=0,$ we conclude that
   \begin{equation}
    \begin{aligned}
&\{e^{\frac{1}{720}c_{2}(W_{i})}\widehat{A}(TM,\nabla^{TM})[{\rm ch}(\triangle(M)\otimes(2\widetilde{T_\mathbf{C}M}+\wedge^2\widetilde{T_\mathbf{C}M}
+\widetilde{T_\mathbf{C}M}\otimes\widetilde{T_\mathbf{C}M}\\
&+S^2\widetilde{T_\mathbf{C}M})){\rm ch}(\triangle(E),g^{\triangle(E)},d)+{\rm ch}(\triangle(M)\otimes2\widetilde{T_{\mathbf{C}}M}){\rm ch}(\triangle(E)\otimes(2\wedge^2\widetilde{E_\mathbf{C}}\\
&-\widetilde{E_\mathbf{C}}\otimes\widetilde{E_\mathbf{C}}+\widetilde{E_\mathbf{C}}),g,d)+{\rm ch}(\triangle(M)){\rm ch}(\triangle(E)\otimes(\wedge^2\widetilde{E_\mathbf{C}}\otimes\wedge^2\widetilde{E_\mathbf{C}}
+2\wedge^4\widetilde{E_\mathbf{C}}\\
&-2\widetilde{E_\mathbf{C}}\otimes\wedge^3\widetilde{E_\mathbf{C}}
+2\widetilde{E_\mathbf{C}}\otimes\wedge^2\widetilde{E_\mathbf{C}}-\widetilde{E_\mathbf{C}}\otimes\widetilde{E_\mathbf{C}}\otimes\widetilde{E_\mathbf{C}}
+\widetilde{E_\mathbf{C}}+\wedge^2\widetilde{E_\mathbf{C}}),g,d)]   \end{aligned}
\end{equation}
\begin{equation}
\begin{aligned}
&+128e^{\frac{1}{720}c_{2}(W_{i})}\widehat{A}(TM,\nabla^{TM})[{\rm ch}(\wedge^4\widetilde{T_\mathbf{C}M}+\wedge^2\widetilde{T_\mathbf{C}M}\otimes\widetilde{T_\mathbf{C}M}+\widetilde{T_\mathbf{C}M}\otimes\widetilde{T_\mathbf{C}M}\\
&+S^2\widetilde{T_\mathbf{C}M}+\widetilde{T_\mathbf{C}M}){\rm ch}(\triangle(E),g^{\triangle(E)},d)
+{\rm ch}(\widetilde{T_{\mathbf{C}}M}+\wedge^2\widetilde{T_{\mathbf{C}}M}){\rm ch}(\triangle(E)\\
&\otimes(2\wedge^2\widetilde{E_\mathbf{C}}-\widetilde
{E_\mathbf{C}}\otimes\widetilde{E_\mathbf{C}}+\widetilde{E_\mathbf{C}}),g,d)
+{\rm ch}(\triangle(E)\otimes(\wedge^2\widetilde{E_\mathbf{C}}\otimes\wedge^2
\widetilde{E_\mathbf{C}}
+2\wedge^4\widetilde{E_\mathbf{C}}\\
&-2\widetilde{E_\mathbf{C}}\otimes\wedge^3
\widetilde{E_\mathbf{C}}
+2\widetilde{E_\mathbf{C}}\otimes\wedge^2\widetilde{E_\mathbf{C}}
-\widetilde{E_\mathbf{C}}\otimes\widetilde{E_\mathbf{C}}\otimes\widetilde{E_\mathbf{C}}
+\widetilde{E_\mathbf{C}}+\wedge^2\widetilde{E_\mathbf{C}}),g,d)]\\
&+e^{\frac{1}{720}c_{2}(W_{i})}[{\rm ch}(-8+W_i)-\tfrac{1}{30}c_{2}(W_{i})][\widehat{A}(TM,\nabla^{TM}){\rm ch}(\triangle(M)\otimes 2\widetilde{T_{\mathbf{C}}M})\\
&\cdot{\rm ch}(\triangle(E),g^{\triangle(E)},d)+\widehat{A}(TM,\nabla^{TM}){\rm ch}(\triangle(M)){\rm ch}(\Delta(E)\otimes(2\wedge^2\widetilde{E_\mathbf{C}}-\widetilde{E_\mathbf{C}}\\
    &\otimes\widetilde{E_\mathbf{C}}+\widetilde{E_\mathbf{C}}),g,d)+128\widehat{A}(TM,\nabla^{TM}){\rm ch}(\widetilde{T_{\mathbf{C}}M}+\wedge^2\widetilde{T_{\mathbf{C}}M}){\rm ch}(\triangle(E),g^{\triangle(E)},d)\\
    &+128\widehat{A}(TM,\nabla^{TM}){\rm ch}(\Delta(E)\otimes(2\wedge^2\widetilde{E_\mathbf{C}}-\widetilde{E_\mathbf{C}}
    \otimes\widetilde{E_\mathbf{C}}+\widetilde{E_\mathbf{C}}),g,d)]+e^{\frac{1}{720}c_{2}(W_{i})}\\
    &\cdot[{\rm ch}(20-8W_i+\overline{W_{i}})-\tfrac{1}{30}c_{2}(W_{i}){\rm ch}(-8+W_i)+\tfrac{1}{2}(-6+\tfrac{1}{30}c_{2}(W_{i}))\cdot\tfrac{1}{30}c_{2}(W_{i})]\\
    &\cdot[{\widehat{A}(TM,\nabla^{TM})}{\rm ch}(\triangle(M)){\rm ch}(\triangle(E),g^{\triangle(E)},d) +128{\widehat{A}(TM,\nabla^{TM})}\\
    &\cdot{\rm ch}(\triangle(E),g^{\triangle(E)},d)]\}^{(15)}=196560 \{ e^{\frac{1}{720}c_{2}(W_{i})}[{\widehat{A}(TM,\nabla^{TM})}{\rm ch}(\triangle(M))\\
&\cdot{\rm ch}(\triangle(E),g^{\triangle(E)},d) +128{\widehat{A}(TM,\nabla^{TM})}{\rm ch}(\triangle(E),g^{\triangle(E)},d)]\}^{(15)}\\
 &-24\{ e^{\frac{1}{720}c_{2}(W_{i})}{\widehat{A}(TM,\nabla^{TM})}[{\rm ch}(\triangle(M)\otimes 2\widetilde{T_{\mathbf{C}}M}){\rm ch}(\triangle(E),g^{\triangle(E)},d)\\
    &+{\rm ch}(\triangle(M)){\rm ch}(\Delta(E)\otimes(2\wedge^2\widetilde{E_\mathbf{C}}-\widetilde{E_\mathbf{C}}
    \otimes\widetilde{E_\mathbf{C}}+\widetilde{E_\mathbf{C}}),g,d)]+128e^{\frac{1}{720}c_{2}(W_{i})}\\
    &\cdot\widehat{A}(TM,\nabla^{TM})[{\rm ch}(\widetilde{T_{\mathbf{C}}M}+\wedge^2\widetilde{T_{\mathbf{C}}M}){\rm ch}(\triangle(E),g^{\triangle(E)},d)+{\rm ch}(\Delta(E)\otimes(2\wedge^2\widetilde{E_\mathbf{C}}\\
    &-\widetilde{E_\mathbf{C}} \otimes\widetilde{E_\mathbf{C}}+\widetilde{E_\mathbf{C}}),g,d)]+e^{\frac{1}{720}c_{2}(W_{i})}[{\rm ch}(-8+W_i)-\tfrac{1}{30}c_{2}(W_{i})][{\widehat{A}(TM,\nabla^{TM})}\\
    &\cdot{\rm ch}(\triangle(M)){\rm ch}(\triangle(E),g^{\triangle(E)},d)+128{\widehat{A}(TM,\nabla^{TM})}{\rm ch}(\triangle(E),g^{\triangle(E)},d)]\}^{(15)}.\nonumber
    \end{aligned}
  \end{equation}
\end{thm}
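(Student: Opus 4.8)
The plan is to proceed exactly as in the proofs of Theorems 3.3 and 3.4, the single new feature being that the ambient space of modular forms is now two-dimensional rather than one-dimensional. Since $\dim M = 15$ forces $k=4$, Theorem 3.2 shows that $Q(\nabla^{TM}, P_i, g, d, \tau)$ is a modular form over $SL(2,\mathbf{Z})$ of weight $2k+4=12$. I will then invoke the classical structure of the weight-$12$ space: it is two-dimensional, spanned by $E_4(\tau)^3$ and $E_6(\tau)^2$ (the only monomials in $E_4,E_6$ of weight $12$), or equivalently by $E_4^3$ and the modular discriminant $\Delta(\tau)=q\prod_{n=1}^{\infty}(1-q^n)^{24}$, the two bases being related by $1728\,\Delta=E_4^3-E_6^2$.

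First I would record the relevant $q$-expansions. Cubing $E_4=1+240q+2160q^2+\cdots$ gives $E_4^3=1+720q+179280q^2+\cdots$, while the discriminant has the standard expansion $\Delta=q-24q^2+\cdots$. Denote the $q$-expansion of $Q$ by $a_0+a_1q+a_2q^2+\cdots$, where the degree-$15$ characteristic-form coefficients $a_0,a_1,a_2$ are read off directly from the expansion (3.37) upon setting $2^{2k-1}=2^{7}=128$; in particular $a_0$ is the constant term $\{e^{\frac{1}{720}c_{2}(W_{i})}[\widehat{A}(TM,\nabla^{TM}){\rm ch}(\triangle(M)){\rm ch}(\triangle(E),g^{\triangle(E)},d)+128\,\widehat{A}(TM,\nabla^{TM}){\rm ch}(\triangle(E),g^{\triangle(E)},d)]\}^{(15)}$, and $a_1,a_2$ are the degree-$15$ parts of the coefficients of $q$ and $q^2$ in (3.37).

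Because the weight-$12$ space is two-dimensional, $Q$ is completely determined by its first two Fourier coefficients $a_0$ and $a_1$. Matching the constant and $q$-coefficients in the basis $\{E_4^3,\Delta\}$ forces $Q=a_0E_4^3+(a_1-720a_0)\Delta$, and then comparing the coefficients of $q^2$ yields $a_2=179280a_0-24(a_1-720a_0)=196560a_0-24a_1$. Substituting the explicit characteristic forms for $a_0,a_1,a_2$ gives precisely the asserted identity; the same numbers $196560$ and $-24$ arise from the basis $\{E_4^3,E_6^2\}$ after the corresponding $2\times2$ elimination, using $E_6^2=1-1008q+220752q^2+\cdots$.

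The main difficulty is purely organizational: one must extract $a_1$ and $a_2$ from (3.37) while tracking the three sources of $q$-dependence---the exponential prefactor $e^{\frac{1}{24}E_{2}(\tau)\cdot\frac{1}{30}c_{2}(W_{i})}$, the twisted virtual bundles $\Theta_j(T_{\mathbf{C}}M)$ and $Q_j(E)$, and the $E_8$-bundle factor $\varphi(\tau)^{(8)}{\rm ch}(\mathcal{V}_i)=1+q\,{\rm ch}(-8+W_i)+q^2\,{\rm ch}(20-8W_i+\overline{W_i})+\cdots$---and assembling the resulting products into the displayed characteristic forms. No genuinely new computation is required, however: these coefficients are structurally identical to those already produced for Theorems 3.3 and 3.4, with only the numerical factor $2^{2k-1}$ changing from $8$ to $128$, so the proof reduces to substituting $128$ and reading off the weight-$12$ linear relation among $a_0$, $a_1$, $a_2$.
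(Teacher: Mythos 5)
Your proposal is correct and follows essentially the same route as the paper: both use that the weight-$12$ space is two-dimensional, determine $Q(\nabla^{TM},P_i,g,d,\tau)$ from its constant and $q$-coefficients read off from (3.37) with $2^{2k-1}=128$, and then compare $q^2$-coefficients to obtain $a_2=196560\,a_0-24\,a_1$. Your use of the basis $\{E_4^3,\Delta\}$ in place of the paper's $\{E_4^3,E_6^2\}$ merely streamlines the $2\times 2$ elimination and yields the same constants.
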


\begin{proof}
$Q(\nabla^{TM},P_{i},g,d,\tau)$ is a modular form of weight $12$ over $\mathrm{SL}(2,\mathbf{Z}),$ so
\begin{align}
Q(\nabla^{TM},P_{i},g,d,\tau)&=\lambda_{1}E_{4}(\tau)^{3}+\lambda_{2}E_{6}(\tau)^{2}\\
&=1+q(720\lambda_{1}-1008\lambda_{2})+q^2(179280\lambda_{1}+220752\lambda_{2})+\cdot\cdot\cdot.\nonumber
\end{align}
where $\lambda_1$, $\lambda_2$ is degree $12$ forms.

By (3.37) and (3.45) , we get two equations about $\lambda_1$, $\lambda_2$
\begin{equation}
\begin{aligned}
\lambda_1
=&\tfrac{1008}{1728}\{ e^{\frac{1}{720}c_{2}(W_{i})}[{\widehat{A}(TM,\nabla^{TM})}{\rm ch}(\triangle(M)){\rm ch}(\triangle(E),g^{\triangle(E)},d) +128\\
&\cdot{\widehat{A}(TM,\nabla^{TM})}{\rm ch}(\triangle(E),g^{\triangle(E)},d)]\}^{(15)}+\tfrac{1}{1728}\{ e^{\frac{1}{720}c_{2}(W_{i})}{\widehat{A}(TM,\nabla^{TM})}\\
 &\cdot[{\rm ch}(\triangle(M)\otimes 2\widetilde{T_{\mathbf{C}}M}){\rm ch}(\triangle(E),g^{\triangle(E)},d)+{\rm ch}(\triangle(M)){\rm ch}(\Delta(E)\otimes(2\wedge^2\widetilde{E_\mathbf{C}}\\
    &-\widetilde{E_\mathbf{C}}
    \otimes\widetilde{E_\mathbf{C}}+\widetilde{E_\mathbf{C}}),g,d)]+128e^{\frac{1}{720}c_{2}(W_{i})}\cdot\widehat{A}(TM,\nabla^{TM})[{\rm ch}(\widetilde{T_{\mathbf{C}}M}+\wedge^2\widetilde{T_{\mathbf{C}}M})\\
    &\cdot{\rm ch}(\triangle(E),g^{\triangle(E)},d)+{\rm ch}(\Delta(E)\otimes(2\wedge^2\widetilde{E_\mathbf{C}}-\widetilde{E_\mathbf{C}} \otimes\widetilde{E_\mathbf{C}}+\widetilde{E_\mathbf{C}}),g,d)]\\
    &+e^{\frac{1}{720}c_{2}(W_{i})}[{\rm ch}(-8+W_i)-\tfrac{1}{30}c_{2}(W_{i})][{\widehat{A}(TM,\nabla^{TM})}{\rm ch}(\triangle(M))\\
    &\cdot{\rm ch}(\triangle(E),g^{\triangle(E)},d)+128{\widehat{A}(TM,\nabla^{TM})}{\rm ch}(\triangle(E),g^{\triangle(E)},d)]\}^{(15)},
    \end{aligned}
  \end{equation}
  and
  \begin{equation}
\begin{aligned}
\lambda_2
=&\tfrac{720}{1728}\{ e^{\frac{1}{720}c_{2}(W_{i})}[{\widehat{A}(TM,\nabla^{TM})}{\rm ch}(\triangle(M)){\rm ch}(\triangle(E),g^{\triangle(E)},d) +128\\
&\cdot{\widehat{A}(TM,\nabla^{TM})}{\rm ch}(\triangle(E),g^{\triangle(E)},d)]\}^{(15)}-\tfrac{1}{1728}\{ e^{\frac{1}{720}c_{2}(W_{i})}{\widehat{A}(TM,\nabla^{TM})}\\
 &\cdot[{\rm ch}(\triangle(M)\otimes 2\widetilde{T_{\mathbf{C}}M}){\rm ch}(\triangle(E),g^{\triangle(E)},d)+{\rm ch}(\triangle(M)){\rm ch}(\Delta(E)\otimes(2\wedge^2\widetilde{E_\mathbf{C}}\\
    &-\widetilde{E_\mathbf{C}}
    \otimes\widetilde{E_\mathbf{C}}+\widetilde{E_\mathbf{C}}),g,d)]+128e^{\frac{1}{720}c_{2}(W_{i})}\cdot\widehat{A}(TM,\nabla^{TM})[{\rm ch}(\widetilde{T_{\mathbf{C}}M}+\wedge^2\widetilde{T_{\mathbf{C}}M})\\
    &\cdot{\rm ch}(\triangle(E),g^{\triangle(E)},d)+{\rm ch}(\Delta(E)\otimes(2\wedge^2\widetilde{E_\mathbf{C}}-\widetilde{E_\mathbf{C}} \otimes\widetilde{E_\mathbf{C}}+\widetilde{E_\mathbf{C}}),g,d)]\\
    &+e^{\frac{1}{720}c_{2}(W_{i})}[{\rm ch}(-8+W_i)-\tfrac{1}{30}c_{2}(W_{i})][{\widehat{A}(TM,\nabla^{TM})}{\rm ch}(\triangle(M))\\
    &\cdot{\rm ch}(\triangle(E),g^{\triangle(E)},d)+128{\widehat{A}(TM,\nabla^{TM})}{\rm ch}(\triangle(E),g^{\triangle(E)},d)]\}^{(15)}.
    \end{aligned}
  \end{equation}

Comparing the coefficients yields two equations.
\end{proof}

\subsection{Modular forms and Witten genus in $(4k-1)$-dimensional spin$^c$ manifolds}\hfill\\

Let $M$ be a closed oriented ${\rm spin}^{c}$ manifold of dimension $4k-1$.
Set $L$ be the complex line bundle associated to the given ${\rm spin^{c}}$ structure on $M.$
Denote by $c=c_1(L)$ the first Chern class of $L,$~$y=-\frac{\sqrt{-1}}{2\pi}c.$
Also, we use $L_{\bf{R}}$ for the notation of $L,$ when it is viewed as an oriented real plane bundle.
Let $\Theta(T_{\mathbf{C}}M,L_{\bf{R}}\otimes\bf{C})$ be the virtual complex vector bundle over $M$ defined by
\begin{equation}
    \begin{aligned}
        \Theta(T_{\mathbf{C}}M,L_{\bf{R}}\otimes\mathbf{C})=&\bigotimes _{n=1}^{\infty}S_{q^n}(\widetilde{T_{\mathbf{C}}M})\otimes
\bigotimes _{m=1}^{\infty}\wedge_{q^m}(\widetilde{L_{\bf{R}}\otimes\mathbf{C}})\\
&\otimes
\bigotimes _{r=1}^{\infty}\wedge_{-q^{r-\frac{1}{2}}}(\widetilde{L_{\bf{R}}\otimes\mathbf{C}})\otimes
\bigotimes _{s=1}^{\infty}\wedge_{q^{s-\frac{1}{2}}}(\widetilde{L_{\bf{R}}\otimes\mathbf{C}}).
    \end{aligned}
\end{equation}

When $c = 0$, the bundle $\Theta(T_{\mathbf{C}}M) = \bigotimes_{n=1}^{\infty} S_{q^{n}}(\widetilde{T_{\mathbf{C}}M})$ is called the Witten bundle over $M$. Following \cite{HY}, the corresponding Witten form in odd dimensions is defined as
\begin{equation}
\begin{aligned}
\widetilde{Q}(\nabla^{TM},g,d,\tau)=\{\widehat{A}(TM,\nabla^{TM}){\rm ch}(\Theta(T_{\mathbf{C}}M)){\rm ch}(Q(E),g^{Q(E)},d,\tau)\}^{(4k-1)}
\end{aligned}
\end{equation}
We begin by generalizing the form $\widetilde{Q}(\nabla^{TM},g,d,\tau)$ to the setting involving a complex line bundle $L$ and $E_{8}$ bundles.
This is achieved by considering
\begin{equation}
\begin{aligned}
\widetilde{Q}(\nabla^{TM},\nabla^{L},P_{i},g,d,\tau)=&\{e^{\frac{1}{24}E_{2}(\tau)\cdot\frac{1}{30}c_{2}(W_{i})} \widehat{A}(TM,\nabla^{TM}){\rm ch}(\Theta(T_{\mathbf{C}}M,L_{\bf{R}}\otimes\mathbf{C}))\\
&\cdot{\rm exp}(\frac{c}{2}){\rm ch}(Q(E),g^{Q(E)},d,\tau)\varphi(\tau)^{(8)}ch(\mathcal{V}_{i})\}^{(4k-1)}.
\end{aligned}
\end{equation}

Let $p_1$ denote the first Pontryagin class.
We obtain the following theorem.
\begin{thm}
Let ${\rm dim}M=4k-1,$ for $k\leq 4.$  If $3p_1(L)-p_1(M)=0$ and $c_3(E,g,d)=0,$ then $\widetilde{Q}(\nabla^{TM},\nabla^{L},P_{i},g,d,\tau)$ is a modular form over $\mathrm{SL}(2,\mathbf{Z})$ with the weight $2k+4.$
\end{thm}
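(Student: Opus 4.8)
The plan is to mirror the proof of Theorem 3.1, treating this spin$^{c}$ form as a twisted Witten-type form and isolating the two independent anomalies that the hypotheses are designed to cancel. First I would factor
\[
\widetilde{Q}(\nabla^{TM},\nabla^{L},P_{i},g,d,\tau)=\widetilde{Q}(M,\nabla^{L},P_{i},\tau)\cdot{\rm ch}(Q(E),g^{Q(E)},d,\tau)^{(4r-1)},
\]
where $\widetilde{Q}(M,\nabla^{L},P_{i},\tau)$ is the degree-$4p$ geometric factor with $4p+(4r-1)=4k-1$, hence $p+r=k$, and the second factor is, by Lemma 3.2 together with the hypothesis $c_{3}(E,g,d)=0$, a modular form of weight $2r$ over $SL(2,\mathbf{Z})$. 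It then suffices to show the geometric factor transforms with weight $2p+4$.

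Next I would express $\widetilde{Q}(M,\nabla^{L},P_{i},\tau)$ through Jacobi theta functions, following the odd-dimensional Witten-form framework of \cite{HY}. Writing $\{\pm 2\pi\sqrt{-1}x_{j}\}_{1\le j\le 2k-1}$ for the formal Chern roots of $T_{\mathbf{C}}M$ and $y=-\tfrac{\sqrt{-1}}{2\pi}c_{1}(L)$, the symmetric powers in $\Theta(T_{\mathbf{C}}M,L_{\mathbf{R}}\otimes\mathbf{C})$ combine with $\widehat{A}(TM,\nabla^{TM})$ into $\prod_{j}\tfrac{x_{j}\theta'(0,\tau)}{\theta(x_{j},\tau)}$; the three families of exterior powers of $\widetilde{L_{\mathbf{R}}\otimes\mathbf{C}}$ together with $\exp(\tfrac{c}{2})$ produce a line-bundle factor that I would rewrite, using the Jacobi identity $\theta'(0,\tau)=\pi\theta_{1}(0,\tau)\theta_{2}(0,\tau)\theta_{3}(0,\tau)$ and the duplication formula for $\theta$, in terms of $\tfrac{\theta_{1}(y,\tau)\theta_{2}(y,\tau)\theta_{3}(y,\tau)}{\theta_{1}(0,\tau)\theta_{2}(0,\tau)\theta_{3}(0,\tau)}$; and the $E_{8}$ data contribute $\tfrac12\big(\prod_{l}\theta_{1}(y_{l}^{i},\tau)+\prod_{l}\theta_{2}(y_{l}^{i},\tau)+\prod_{l}\theta_{3}(y_{l}^{i},\tau)\big)$. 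The prefactor $e^{\frac1{24}E_{2}(\tau)\cdot\frac1{30}c_{2}(W_{i})}$ is carried along unchanged.

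With this expression, $T$-invariance is immediate from (2.16)--(2.28): under $\tau\mapsto\tau+1$ the functions $\theta,\theta_{1}$ acquire a common phase $e^{\pi\sqrt{-1}/4}$ cancelling in every ratio, $\theta_{2}\leftrightarrow\theta_{3}$ are interchanged but occur symmetrically in both the line-bundle and the $E_{8}$ factor, and $E_{2}(\tau+1)=E_{2}(\tau)$, so $\widetilde{Q}(M,\nabla^{L},P_{i},\tau+1)=\widetilde{Q}(M,\nabla^{L},P_{i},\tau)$. The substance is $\tau\mapsto-1/\tau$. Applying (2.16)--(2.28) rescales every argument ($x_{j}\mapsto\tau x_{j}$, $y\mapsto\tau y$, $y_{l}^{i}\mapsto\tau y_{l}^{i}$), assembles the $(\tau/\sqrt{-1})^{1/2}$ powers into the weight, and generates two anomalous exponentials $e^{\pi\sqrt{-1}\tau(\cdot)}$. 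The $E_{8}$ anomaly $e^{\pi\sqrt{-1}\tau\sum_{l}(y_{l}^{i})^{2}}$ is cancelled, exactly as in Theorem 3.1, by the $-\tfrac{6\sqrt{-1}\tau}{\pi}$ term in $E_{2}(-1/\tau)$ through the relation $\sum_{l}(2\pi\sqrt{-1}y_{l}^{i})^{2}=-\tfrac1{30}c_{2}(W_{i})$. The tangent/line anomaly $e^{\pi\sqrt{-1}\tau(3y^{2}-\sum_{j}x_{j}^{2})}$ collects one negative contribution from each $\theta(x_{j})$ and three positive contributions from $\theta_{1}(y),\theta_{2}(y),\theta_{3}(y)$; since $p_{1}(M)$ and $p_{1}(L)$ are represented by $-4\pi^{2}\sum_{j}x_{j}^{2}$ and $-4\pi^{2}y^{2}$, the hypothesis $3p_{1}(L)-p_{1}(M)=0$ is precisely $\sum_{j}x_{j}^{2}=3y^{2}$, which annihilates this exponential. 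After both anomalies vanish the transformed form equals the original with all Chern roots scaled by $\tau$; degree-$4p$ extraction supplies $\tau^{2p}$, the residual $(\tau/\sqrt{-1})^{1/2}$ powers (eight surviving ones from the $E_{8}$ theta factors after the tangent/line theta functions pair off between numerator and denominator) supply $\tau^{4}$, giving $\widetilde{Q}(M,\nabla^{L},P_{i},-1/\tau)=\tau^{2p+4}\widetilde{Q}(M,\nabla^{L},P_{i},\tau)$, and multiplication by the weight-$2r$ factor yields total weight $2p+2r+4=2k+4$.

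I expect the main obstacle to be the $S$-transformation of the line-bundle factor. Naively $\exp(\tfrac{c}{2})$ leaves a $\tau$-independent $\cos\pi y$-type prefactor alongside $\tfrac{\theta_{1}(y,\tau)\theta_{2}(y,\tau)\theta_{3}(y,\tau)}{\theta_{1}(0,\tau)\theta_{2}(0,\tau)\theta_{3}(0,\tau)}$, and such a factor, being unchanged under $\tau\mapsto-1/\tau$, would fail to match the rescaled argument $\tau y$ produced by the theta transformations and would obstruct the Liu-type argument. The crux is therefore to verify, via the Jacobi and duplication identities, that $\exp(\tfrac{c}{2})$ reorganizes the three exterior families into a genuine Jacobi form in $y$ carrying no $\tau$-independent elementary factor, and whose anomaly exponent is exactly the $3y^{2}$ matching $3p_{1}(L)$; this is the step that truly uses the spin$^{c}$ structure and the precise cited construction of \cite{HY,GWL}. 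The restriction $k\le4$ enters only to keep the weight $2k+4\le12$, so that the $c_{2}(W_{i})$-level description of the $E_{8}$ character suffices.
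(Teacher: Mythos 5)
Your overall strategy is the right one and is the one the paper intends: Theorem 3.6 is stated without proof, and the implied argument is exactly the adaptation of the proof of Theorem 3.2 that you describe --- factor off ${\rm ch}(Q(E),g^{Q(E)},d,\tau)^{(4r-1)}$, which Lemma 3.1 makes a weight-$2r$ modular form when $c_3(E,g,d)=0$, express the remaining geometric factor through theta functions, cancel the $E_8$ anomaly $e^{\pi\sqrt{-1}\tau\sum_l (y_l^i)^2}$ against the $-\frac{6\sqrt{-1}\tau}{\pi}$ term of $E_2(-\frac{1}{\tau})$ via $\sum_l(2\pi\sqrt{-1}y_l^i)^2=-\frac{1}{30}c_2(W_i)$, cancel the tangent/line anomaly via $3p_1(L)-p_1(M)=0$, and count the weight as $2p+4+2r=2k+4$. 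All of that is correct.

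The gap is at the point you yourself identify as the crux, and the resolution you propose there is wrong. With $c=2\pi\sqrt{-1}y$, the Chern character of the three exterior-power families of $\widetilde{L_{\mathbf R}\otimes\mathbf C}$ in $\Theta(T_{\mathbf C}M,L_{\mathbf R}\otimes\mathbf C)$, multiplied by $\exp(\frac{c}{2})$, equals
\begin{equation*}
\frac{e^{\pi\sqrt{-1}y}}{\cos(\pi y)}\cdot\frac{\theta_1(y,\tau)\,\theta_2(y,\tau)\,\theta_3(y,\tau)}{\theta_1(0,\tau)\,\theta_2(0,\tau)\,\theta_3(0,\tau)},
\end{equation*}
so the $\tau$-independent elementary factor $\frac{e^{\pi\sqrt{-1}y}}{\cos(\pi y)}=1+\sqrt{-1}\tan(\pi y)$ that you hope to remove by Jacobi and duplication identities is genuinely present: no identity eliminates it, it does not rescale under $\tau\mapsto-\frac{1}{\tau}$, and on its face it would wreck the homogeneity argument, exactly as you fear. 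The theorem is nonetheless true, but for a parity reason you do not invoke: $\sqrt{-1}\tan(\pi y)$ is odd in $y$, hence contributes only to components of the geometric factor of degree $\equiv 2\pmod 4$, whereas ${\rm ch}(Q(E),g^{Q(E)},d,\tau)$, being $-\frac{2^{N/2}}{8\pi^2}\int_0^1{\rm Tr}[g^{-1}dg\,A]\,du$ with $A$ an odd power series in the $2$-form $R_u$, is concentrated in degrees $3,7,11,\dots$, i.e.\ $\equiv 3\pmod 4$. Consequently only the degree-$\equiv 0\pmod 4$ part of the geometric factor survives in the degree-$(4k-1)$ component; there $\exp(\frac{c}{2})$ may be replaced by $\cosh(\frac{c}{2})=\cos(\pi y)$, which cancels the $\cos(\pi y)$ in the denominator exactly and leaves the pure theta quotient $\frac{\theta_1(y,\tau)\theta_2(y,\tau)\theta_3(y,\tau)}{\theta_1(0,\tau)\theta_2(0,\tau)\theta_3(0,\tau)}$ to which your $S$-transformation computation then applies verbatim. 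Without this observation the step you flag as "the crux" cannot be completed as you describe it.
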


\begin{thm}
For $7$-dimensional spin manifold, when $3p_1(L)-p_1(M)=0$ and $c_3(E,g,d)=0,$ we conclude that
 \begin{equation}
\begin{aligned}
& \{ {\widehat{A}(TM,\nabla^{TM})}{\rm exp}(\frac{c}{2}){\rm ch}(\triangle(E),g^{\triangle(E)},d)[-e^{\frac{1}{720}c_{2}(W_{i})}\cdot\tfrac{1}{30}c_{2}(W_{i})+e^{\frac{1}{720}c_{2}(W_{i})}\\
&\cdot{\rm ch}(-8+W_i)+e^{\frac{1}{720}c_{2}(W_{i})}{\rm ch}(\widetilde{T_\mathbf{C}M}+\widetilde{E_\mathbf{C}}-\widetilde{E_\mathbf{C}}
    \otimes\widetilde{E_\mathbf{C}}+2\wedge^2\widetilde{E_\mathbf{C}}+\widetilde{L_{\bf{R}}\otimes\mathbf{C}}\\
    &-(\widetilde{L_{\bf{R}}\otimes\mathbf{C}})
    \otimes(\widetilde{L_{\bf{R}}\otimes\mathbf{C}})+2\wedge^2\widetilde{L_{\bf{R}}\otimes\mathbf{C}})]\}^{(7)}=480\{ e^{\frac{1}{720}c_{2}(W_{i})}{\widehat{A}(TM,\nabla^{TM})}\\
    &\cdot{\rm exp}(\frac{c}{2}){\rm ch}(\triangle(E),g^{\triangle(E)},d) \}^{(7)},
\end{aligned}
\end{equation}
  and
 \begin{equation}
\begin{aligned}
&\{ {\widehat{A}(TM,\nabla^{TM})}{\rm exp}(\frac{c}{2}){\rm ch}(\triangle(E),g^{\triangle(E)},d)[e^{\frac{1}{720}c_{2}(W_{i})}\cdot\tfrac{1}{2}(-6+\tfrac{1}{30}c_{2}(W_{i}))\cdot\tfrac{1}{30}c_{2}(W_{i})\\
&-e^{\frac{1}{720}c_{2}(W_{i})}\cdot\tfrac{1}{30}c_{2}(W_{i}){\rm ch}(-8+W_i)+e^{\frac{1}{720}c_{2}(W_{i})}{\rm ch}(20-8W_i+\overline{W_{i}})-e^{\frac{1}{720}c_{2}(W_{i})}\\
&\cdot\tfrac{1}{30}c_{2}(W_{i}){\rm ch}(\widetilde{T_\mathbf{C}M}+\widetilde{E_\mathbf{C}}-\widetilde{E_\mathbf{C}}
    \otimes\widetilde{E_\mathbf{C}}+2\wedge^2\widetilde{E_\mathbf{C}}+\widetilde{L_{\bf{R}}\otimes\mathbf{C}}-(\widetilde{L_{\bf{R}}\otimes\mathbf{C}})\\
    & \otimes(\widetilde{L_{\bf{R}}\otimes\mathbf{C}})+2\wedge^2\widetilde{L_{\bf{R}}\otimes\mathbf{C}})+e^{\frac{1}{720}c_{2}(W_{i})}{\rm ch}(-8+W_i){\rm ch}(\widetilde{T_\mathbf{C}M}+\widetilde{E_\mathbf{C}}\\
&-\widetilde{E_\mathbf{C}}
    \otimes\widetilde{E_\mathbf{C}}+2\wedge^2\widetilde{E_\mathbf{C}}+\widetilde{L_{\bf{R}}\otimes\mathbf{C}}-(\widetilde{L_{\bf{R}}\otimes\mathbf{C}})
    \otimes(\widetilde{L_{\bf{R}}\otimes\mathbf{C}})+2\wedge^2\widetilde{L_{\bf{R}}\otimes\mathbf{C}})\\
    &+e^{\frac{1}{720}c_{2}(W_{i})}{\rm ch}(\widetilde{T_\mathbf{C}M}+S^2\widetilde{T_\mathbf{C}M}+\widetilde{T_\mathbf{C}M}\otimes(\widetilde{E_\mathbf{C}}-\widetilde{E_\mathbf{C}}
    \otimes\widetilde{E_\mathbf{C}}+2\wedge^2\widetilde{E_\mathbf{C}}+\widetilde{L_{\bf{R}}\otimes\mathbf{C}}\\
    &-(\widetilde{L_{\bf{R}}\otimes\mathbf{C}})
    \otimes(\widetilde{L_{\bf{R}}\otimes\mathbf{C}})+2\wedge^2\widetilde{L_{\bf{R}}\otimes\mathbf{C}})+\wedge^2\widetilde{E_\mathbf{C}}\otimes\wedge^2\widetilde{E_\mathbf{C}}
+2\wedge^4\widetilde{E_\mathbf{C}}-2\widetilde{E_\mathbf{C}}\\
&\otimes\wedge^3\widetilde{E_\mathbf{C}}
+2\widetilde{E_\mathbf{C}}\otimes\wedge^2\widetilde{E_\mathbf{C}}-\widetilde{E_\mathbf{C}}\otimes
\widetilde{E_\mathbf{C}}\otimes\widetilde{E_\mathbf{C}}+\widetilde{E_\mathbf{C}}
+\wedge^2\widetilde{E_\mathbf{C}}+(2\wedge^2\widetilde{E_\mathbf{C}}-\widetilde{E_\mathbf{C}}\\
&\otimes\widetilde{E_\mathbf{C}}+\widetilde{E_\mathbf{C}})\otimes(2\wedge^2\widetilde{L_{\bf{R}}\otimes\mathbf{C}}-(\widetilde{L_{\bf{R}}\otimes\mathbf{C}})\otimes(\widetilde{L_{\bf{R}}\otimes\mathbf{C}})+\widetilde{L_{\bf{R}}\otimes\mathbf{C}})+\wedge^2\widetilde{L_{\bf{R}}\otimes\mathbf{C}}\\
&\otimes\wedge^2\widetilde{L_{\bf{R}}
\otimes\mathbf{C}}+2\wedge^4\widetilde{L_{\bf{R}}\otimes\mathbf{C}}-2\widetilde{L_{\bf{R}}\otimes\mathbf{C}}\otimes\wedge^3\widetilde{L_{\bf{R}}
\otimes\mathbf{C}}+2\widetilde{L_{\bf{R}}\otimes\mathbf{C}}\otimes\wedge^2\widetilde{L_{\bf{R}}\otimes\mathbf{C}}\\
&-\widetilde{L_{\bf{R}}\otimes\mathbf{C}}\otimes\widetilde{L_{\bf{R}}
\otimes\mathbf{C}}\otimes\widetilde{L_{\bf{R}}
\otimes\mathbf{C}}+\widetilde{L_{\bf{R}}
\otimes\mathbf{C}}+\wedge^2\widetilde{L_{\bf{R}}\otimes\mathbf{C}})]\}^{(7)}=61920\{ e^{\frac{1}{720}c_{2}(W_{i})}\\
    &\cdot{\widehat{A}(TM,\nabla^{TM})}{\rm exp}(\frac{c}{2}){\rm ch}(\triangle(E),g^{\triangle(E)},d) \}^{(7)}.
\end{aligned}
\end{equation}
\end{thm}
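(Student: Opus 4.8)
The plan is to reproduce the argument of Theorem 3.3 in the spin$^c$ setting. First I would apply Theorem 3.6 with $k=2$: since $\dim M = 7 = 4\cdot 2 - 1$ and the hypotheses $3p_1(L)-p_1(M)=0$ and $c_3(E,g,d)=0$ hold, $\widetilde{Q}(\nabla^{TM},\nabla^{L},P_i,g,d,\tau)$ is a modular form over $SL(2,\mathbf{Z})$ of weight $2k+4=8$. The space of weight-$8$ modular forms over $SL(2,\mathbf{Z})$ is one-dimensional, spanned by $E_4(\tau)^2 = 1+480q+61920q^2+\cdots$ as in (3.40). Consequently $\widetilde{Q}$ equals its own constant term multiplied by $E_4(\tau)^2$, and the two asserted identities are obtained by matching the coefficients of $q^1$ and $q^2$ on both sides.

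The one genuinely new input is the $q$-expansion of $\text{ch}(\Theta(T_{\mathbf{C}}M,L_{\mathbf{R}}\otimes\mathbf{C}))$ through order $q^2$, which replaces the spin-case expansion. Writing $\widetilde{T}=\widetilde{T_{\mathbf{C}}M}$ and $\widetilde{L}=\widetilde{L_{\mathbf{R}}\otimes\mathbf{C}}$, I would expand the four tensor factors $\bigotimes_n S_{q^n}(\widetilde{T})$, $\bigotimes_m\wedge_{q^m}(\widetilde{L})$, $\bigotimes_r\wedge_{-q^{r-1/2}}(\widetilde{L})$ and $\bigotimes_s\wedge_{q^{s-1/2}}(\widetilde{L})$ via the Chern-character rules $\text{ch}(\wedge_t(E))=\prod_i(1+e^{\omega_i}t)$ and $\text{ch}(S_t(E))=\prod_i(1-e^{\omega_i}t)^{-1}$ recorded in Section 2. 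The essential mechanism is that the half-integer powers $q^{r-1/2}$ contributed by the $\wedge_{-q^{r-1/2}}$ and $\wedge_{q^{s-1/2}}$ factors cancel, so the product is an honest power series in $q$; a direct check gives its coefficient of $q$ as $\widetilde{T}+\widetilde{L}-\widetilde{L}\otimes\widetilde{L}+2\wedge^2\widetilde{L}$, matching the virtual bundle appearing in the first identity.

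With this expansion in hand, I would assemble the full $q$-series for $\widetilde{Q}$ by multiplying five factors: the exponential $e^{\frac{1}{24}E_2(\tau)\cdot\frac{1}{30}c_2(W_i)}$ from (3.33), the new $\widehat{A}(TM,\nabla^{TM})\,\text{ch}(\Theta(T_{\mathbf{C}}M,L_{\mathbf{R}}\otimes\mathbf{C}))\,\text{exp}(\tfrac{c}{2})$ expansion just described, the odd Chern-character factor $\text{ch}(Q(E),g^{Q(E)},d,\tau)$ from (3.35), and the $E_8$ factor $\varphi(\tau)^{(8)}ch(\mathcal{V}_i)$ from (3.36). Reading off the constant term isolates $e^{\frac{1}{720}c_2(W_i)}\widehat{A}(TM,\nabla^{TM})\,\text{exp}(\tfrac{c}{2})\,\text{ch}(\triangle(E),g^{\triangle(E)},d)$, which is precisely the right-hand factor of both displayed equations. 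Multiplying this constant term by $480$ and by $61920$ and equating with the $q^1$- and $q^2$-coefficients of the assembled series then produces the two formulas of the theorem.

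The hard part will be purely the bookkeeping in the order-$q^2$ computation: one must carry the product of five $q$-series out to $q^2$ while correctly tracking every virtual tensor product, including the mixed terms $\widetilde{T}\otimes(\cdots)$ and the cross terms between $\widetilde{E_{\mathbf{C}}}$ and $\widetilde{L}$ that first appear at this order. I would also verify the half-integer-power cancellation explicitly at order $q^2$, since it is exactly what guarantees that the $E_4(\tau)^2$ comparison is legitimate.
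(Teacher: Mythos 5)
Your proposal is correct and follows the paper's own route: expand $\widetilde{Q}(\nabla^{TM},\nabla^{L},P_i,g,d,\tau)$ to order $q^2$ (the only new ingredient being the $q$-expansion of ${\rm ch}(\Theta(T_{\mathbf{C}}M,L_{\mathbf{R}}\otimes\mathbf{C}))$, whose half-integer powers cancel exactly as you describe) and then compare coefficients against the unique normalized modular form of the relevant weight. Moreover, your identification of the weight as $2k+4=8$ and of the comparison form as $E_4(\tau)^2=1+480q+61920q^2+\cdots$ is the correct one -- it is what the stated coefficients $480$ and $61920$ force -- whereas the paper's printed proof of this theorem says ``weight $10$'' and ``$\lambda E_4(\tau)E_6(\tau)$'' (a slip carried over from the $11$-dimensional case, whose coefficients are $-264$ and $-135432$), so your version in fact repairs that inconsistency.
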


\begin{proof}
Direct computation show
\begin{equation}
\begin{aligned}
&\widetilde{Q}(\nabla^{TM},\nabla^{L},P_{i},g,d,\tau)\\
=&\{ e^{\frac{1}{720}c_{2}(W_{i})}{\widehat{A}(TM,\nabla^{TM})}{\rm exp}(\frac{c}{2}){\rm ch}(\triangle(E),g^{\triangle(E)},d) \}^{(4k-1)}+q \{ {\widehat{A}(TM,\nabla^{TM})}\\
&\cdot{\rm exp}(\frac{c}{2}){\rm ch}(\triangle(E),g^{\triangle(E)},d)[-e^{\frac{1}{720}c_{2}(W_{i})}\cdot\tfrac{1}{30}c_{2}(W_{i})+e^{\frac{1}{720}c_{2}(W_{i})}{\rm ch}(-8+W_i)\\
&+e^{\frac{1}{720}c_{2}(W_{i})}{\rm ch}(\widetilde{T_\mathbf{C}M}+\widetilde{E_\mathbf{C}}-\widetilde{E_\mathbf{C}}
    \otimes\widetilde{E_\mathbf{C}}+2\wedge^2\widetilde{E_\mathbf{C}}+\widetilde{L_{\bf{R}}\otimes\mathbf{C}}-(\widetilde{L_{\bf{R}}\otimes\mathbf{C}})\\
    & \otimes(\widetilde{L_{\bf{R}}\otimes\mathbf{C}})+2\wedge^2\widetilde{L_{\bf{R}}\otimes\mathbf{C}})]\}^{(4k-1)}+q^2\{ {\widehat{A}(TM,\nabla^{TM})}{\rm exp}(\frac{c}{2})   \end{aligned}
\end{equation}
\begin{equation}
\begin{aligned}
&\cdot{\rm ch}(\triangle(E),g^{\triangle(E)},d)[e^{\frac{1}{720}c_{2}(W_{i})}\cdot\tfrac{1}{2}(-6+\tfrac{1}{30}c_{2}(W_{i}))\cdot\tfrac{1}{30}c_{2}(W_{i})-e^{\frac{1}{720}c_{2}(W_{i})}\\
&\cdot\tfrac{1}{30}c_{2}(W_{i}){\rm ch}(-8+W_i)+e^{\frac{1}{720}c_{2}(W_{i})}{\rm ch}(20-8W_i+\overline{W_{i}})-e^{\frac{1}{720}c_{2}(W_{i})}\cdot\tfrac{1}{30}c_{2}(W_{i})\\
&\cdot{\rm ch}(\widetilde{T_\mathbf{C}M}+\widetilde{E_\mathbf{C}}-\widetilde{E_\mathbf{C}}
    \otimes\widetilde{E_\mathbf{C}}+2\wedge^2\widetilde{E_\mathbf{C}}+\widetilde{L_{\bf{R}}\otimes\mathbf{C}}-(\widetilde{L_{\bf{R}}\otimes\mathbf{C}})\otimes(\widetilde{L_{\bf{R}}\otimes\mathbf{C}})\\
    &+2\wedge^2\widetilde{L_{\bf{R}}\otimes\mathbf{C}})+e^{\frac{1}{720}c_{2}(W_{i})}{\rm ch}(-8+W_i){\rm ch}(\widetilde{T_\mathbf{C}M}+\widetilde{E_\mathbf{C}}-\widetilde{E_\mathbf{C}}
    \otimes\widetilde{E_\mathbf{C}}+2\wedge^2\widetilde{E_\mathbf{C}}\\
    &+\widetilde{L_{\bf{R}}\otimes\mathbf{C}}-(\widetilde{L_{\bf{R}}\otimes\mathbf{C}})
    \otimes(\widetilde{L_{\bf{R}}\otimes\mathbf{C}})+2\wedge^2\widetilde{L_{\bf{R}}\otimes\mathbf{C}})+e^{\frac{1}{720}c_{2}(W_{i})}{\rm ch}(\widetilde{T_\mathbf{C}M}\\
    &+S^2\widetilde{T_\mathbf{C}M}+\widetilde{T_\mathbf{C}M}\otimes(\widetilde{E_\mathbf{C}}-\widetilde{E_\mathbf{C}}
    \otimes\widetilde{E_\mathbf{C}}+2\wedge^2\widetilde{E_\mathbf{C}}+\widetilde{L_{\bf{R}}\otimes\mathbf{C}}-(\widetilde{L_{\bf{R}}\otimes\mathbf{C}})\\
    &\otimes(\widetilde{L_{\bf{R}}\otimes\mathbf{C}})+2\wedge^2\widetilde{L_{\bf{R}}\otimes\mathbf{C}})+\wedge^2\widetilde{E_\mathbf{C}}\otimes\wedge^2\widetilde{E_\mathbf{C}}
+2\wedge^4\widetilde{E_\mathbf{C}}-2\widetilde{E_\mathbf{C}}\otimes\wedge^3\widetilde{E_\mathbf{C}}\\
&+2\widetilde{E_\mathbf{C}}\otimes\wedge^2\widetilde{E_\mathbf{C}}-\widetilde{E_\mathbf{C}}\otimes
\widetilde{E_\mathbf{C}}\otimes\widetilde{E_\mathbf{C}}+\widetilde{E_\mathbf{C}}
+\wedge^2\widetilde{E_\mathbf{C}}+(2\wedge^2\widetilde{E_\mathbf{C}}-\widetilde{E_\mathbf{C}}
    \otimes\widetilde{E_\mathbf{C}}\\
    &+\widetilde{E_\mathbf{C}})\otimes(2\wedge^2\widetilde{L_{\bf{R}}\otimes\mathbf{C}}-(\widetilde{L_{\bf{R}}\otimes\mathbf{C}})
    \otimes(\widetilde{L_{\bf{R}}\otimes\mathbf{C}})+\widetilde{L_{\bf{R}}\otimes\mathbf{C}})+\wedge^2\widetilde{L_{\bf{R}}\otimes\mathbf{C}}\\
&\otimes\wedge^2\widetilde{L_{\bf{R}}
\otimes\mathbf{C}}+2\wedge^4\widetilde{L_{\bf{R}}\otimes\mathbf{C}}-2\widetilde{L_{\bf{R}}\otimes\mathbf{C}}\otimes\wedge^3\widetilde{L_{\bf{R}}
\otimes\mathbf{C}}
+2\widetilde{L_{\bf{R}}
\otimes\mathbf{C}}\otimes\wedge^2\widetilde{L_{\bf{R}}
\otimes\mathbf{C}}\\
&-\widetilde{L_{\bf{R}}
\otimes\mathbf{C}}\otimes\widetilde{L_{\bf{R}}
\otimes\mathbf{C}}\otimes\widetilde{L_{\bf{R}}
\otimes\mathbf{C}}+\widetilde{L_{\bf{R}}
\otimes\mathbf{C}}+\wedge^2\widetilde{L_{\bf{R}}\otimes\mathbf{C}})]\}^{(4k-1)}+\cdot\cdot\cdot.\nonumber
\end{aligned}
\end{equation}

For a $7$-dimensional manifold, $\widetilde{Q}(\nabla^{TM},\nabla^{L},P_{i},g,d,\tau)$ is a weight $10$ modular form on $\mathrm{SL}(2,\mathbf{Z}).$ Therefore, $\widetilde{Q}(\nabla^{TM},\nabla^{L},P_{i},g,d,\tau)=\lambda E_4(\tau)E_6(\tau).$
We thus obtain the Theorem 3.8.
\end{proof}

One can draw a similar conclusion for an $11$-dimensional manifold.
\begin{thm}
For $11$-dimensional spin manifold, when $3p_1(L)-p_1(M)=0$ and $c_3(E,g,d)=0,$ we conclude that
 \begin{equation}
\begin{aligned}
& \{ {\widehat{A}(TM,\nabla^{TM})}{\rm exp}(\frac{c}{2}){\rm ch}(\triangle(E),g^{\triangle(E)},d)[-e^{\frac{1}{720}c_{2}(W_{i})}\cdot\tfrac{1}{30}c_{2}(W_{i})+e^{\frac{1}{720}c_{2}(W_{i})}\\
&\cdot{\rm ch}(-8+W_i)+e^{\frac{1}{720}c_{2}(W_{i})}{\rm ch}(\widetilde{T_\mathbf{C}M}+\widetilde{E_\mathbf{C}}-\widetilde{E_\mathbf{C}}
    \otimes\widetilde{E_\mathbf{C}}+2\wedge^2\widetilde{E_\mathbf{C}}+\widetilde{L_{\bf{R}}\otimes\mathbf{C}}\\
    &-(\widetilde{L_{\bf{R}}\otimes\mathbf{C}})
    \otimes(\widetilde{L_{\bf{R}}\otimes\mathbf{C}})+2\wedge^2\widetilde{L_{\bf{R}}\otimes\mathbf{C}})]\}^{(11)}=-264\{ e^{\frac{1}{720}c_{2}(W_{i})}{\widehat{A}(TM,\nabla^{TM})}\\
    &\cdot{\rm exp}(\frac{c}{2}){\rm ch}(\triangle(E),g^{\triangle(E)},d) \}^{(11)},
\end{aligned}
\end{equation}
  and
 \begin{equation}
\begin{aligned}
&\{ {\widehat{A}(TM,\nabla^{TM})}{\rm exp}(\frac{c}{2}){\rm ch}(\triangle(E),g^{\triangle(E)},d)[e^{\frac{1}{720}c_{2}(W_{i})}\cdot\tfrac{1}{2}(-6+\tfrac{1}{30}c_{2}(W_{i}))\cdot\tfrac{1}{30}c_{2}(W_{i})\\
&-e^{\frac{1}{720}c_{2}(W_{i})}\cdot\tfrac{1}{30}c_{2}(W_{i}){\rm ch}(-8+W_i)+e^{\frac{1}{720}c_{2}(W_{i})}{\rm ch}(20-8W_i+\overline{W_{i}})-e^{\frac{1}{720}c_{2}(W_{i})}\\
&\cdot\tfrac{1}{30}c_{2}(W_{i}){\rm ch}(\widetilde{T_\mathbf{C}M}+\widetilde{E_\mathbf{C}}-\widetilde{E_\mathbf{C}}
    \otimes\widetilde{E_\mathbf{C}}+2\wedge^2\widetilde{E_\mathbf{C}}+\widetilde{L_{\bf{R}}\otimes\mathbf{C}}-(\widetilde{L_{\bf{R}}\otimes\mathbf{C}})\\
    & \otimes(\widetilde{L_{\bf{R}}\otimes\mathbf{C}})+2\wedge^2\widetilde{L_{\bf{R}}\otimes\mathbf{C}})+e^{\frac{1}{720}c_{2}(W_{i})}{\rm ch}(-8+W_i){\rm ch}(\widetilde{T_\mathbf{C}M}+\widetilde{E_\mathbf{C}}\\
&-\widetilde{E_\mathbf{C}}
    \otimes\widetilde{E_\mathbf{C}}+2\wedge^2\widetilde{E_\mathbf{C}}+\widetilde{L_{\bf{R}}\otimes\mathbf{C}}-(\widetilde{L_{\bf{R}}\otimes\mathbf{C}})
    \otimes(\widetilde{L_{\bf{R}}\otimes\mathbf{C}})+2\wedge^2\widetilde{L_{\bf{R}}\otimes\mathbf{C}}) \end{aligned}
\end{equation}
\begin{equation}
\begin{aligned}
    &+e^{\frac{1}{720}c_{2}(W_{i})}{\rm ch}(\widetilde{T_\mathbf{C}M}+S^2\widetilde{T_\mathbf{C}M}+\widetilde{T_\mathbf{C}M}\otimes(\widetilde{E_\mathbf{C}}-\widetilde{E_\mathbf{C}}
    \otimes\widetilde{E_\mathbf{C}}+2\wedge^2\widetilde{E_\mathbf{C}}+\widetilde{L_{\bf{R}}\otimes\mathbf{C}}\\
    &-(\widetilde{L_{\bf{R}}\otimes\mathbf{C}})
    \otimes(\widetilde{L_{\bf{R}}\otimes\mathbf{C}})+2\wedge^2\widetilde{L_{\bf{R}}\otimes\mathbf{C}})+\wedge^2\widetilde{E_\mathbf{C}}\otimes\wedge^2\widetilde{E_\mathbf{C}}
+2\wedge^4\widetilde{E_\mathbf{C}}-2\widetilde{E_\mathbf{C}}\\
&\otimes\wedge^3\widetilde{E_\mathbf{C}}
+2\widetilde{E_\mathbf{C}}\otimes\wedge^2\widetilde{E_\mathbf{C}}-\widetilde{E_\mathbf{C}}\otimes
\widetilde{E_\mathbf{C}}\otimes\widetilde{E_\mathbf{C}}+\widetilde{E_\mathbf{C}}
+\wedge^2\widetilde{E_\mathbf{C}}+(2\wedge^2\widetilde{E_\mathbf{C}}-\widetilde{E_\mathbf{C}}\\
&\otimes\widetilde{E_\mathbf{C}}+\widetilde{E_\mathbf{C}})\otimes(2\wedge^2\widetilde{L_{\bf{R}}\otimes\mathbf{C}}-(\widetilde{L_{\bf{R}}\otimes\mathbf{C}})\otimes(\widetilde{L_{\bf{R}}\otimes\mathbf{C}})+\widetilde{L_{\bf{R}}\otimes\mathbf{C}})+\wedge^2\widetilde{L_{\bf{R}}\otimes\mathbf{C}}\\
&\otimes\wedge^2\widetilde{L_{\bf{R}}
\otimes\mathbf{C}}+2\wedge^4\widetilde{L_{\bf{R}}\otimes\mathbf{C}}-2\widetilde{L_{\bf{R}}\otimes\mathbf{C}}\otimes\wedge^3\widetilde{L_{\bf{R}}
\otimes\mathbf{C}}+2\widetilde{L_{\bf{R}}\otimes\mathbf{C}}\otimes\wedge^2\widetilde{L_{\bf{R}}\otimes\mathbf{C}}\\
&-\widetilde{L_{\bf{R}}\otimes\mathbf{C}}\otimes\widetilde{L_{\bf{R}}
\otimes\mathbf{C}}\otimes\widetilde{L_{\bf{R}}
\otimes\mathbf{C}}+\widetilde{L_{\bf{R}}
\otimes\mathbf{C}}+\wedge^2\widetilde{L_{\bf{R}}\otimes\mathbf{C}})]\}^{(11)}=-135432\\
    &\cdot\{ e^{\frac{1}{720}c_{2}(W_{i})}{\widehat{A}(TM,\nabla^{TM})}{\rm exp}(\frac{c}{2}){\rm ch}(\triangle(E),g^{\triangle(E)},d) \}^{(11)}.\nonumber
\end{aligned}
\end{equation}
\end{thm}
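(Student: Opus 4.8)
The argument runs in complete parallel to the proofs of Theorems 3.3 and 3.4, now carried out for the spin$^c$ form $\widetilde{Q}(\nabla^{TM},\nabla^{L},P_{i},g,d,\tau)$. First I would apply Theorem 3.6: since $3p_1(L)-p_1(M)=0$ and $c_3(E,g,d)=0$ hold by hypothesis, the form $\widetilde{Q}(\nabla^{TM},\nabla^{L},P_{i},g,d,\tau)$ is a modular form over $SL(2,\mathbf{Z})$ of weight $2k+4$. For $\dim M=11$ we have $k=3$, so its weight equals $10$.

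Next I would exploit the structure of the graded ring of modular forms over $SL(2,\mathbf{Z})$. The space of weight-$10$ modular forms is one-dimensional and spanned by $E_4(\tau)E_6(\tau)$; hence there is a single degree-$11$ characteristic form $\lambda$ on $M$ with
\begin{align}
\widetilde{Q}(\nabla^{TM},\nabla^{L},P_{i},g,d,\tau)=\lambda\, E_4(\tau)E_6(\tau),\nonumber
\end{align}
this being an identity of $q$-series of top-degree forms. As recorded in the proof of Theorem 3.4, one has $E_4(\tau)E_6(\tau)=1-264q-135432q^2+\cdots$.

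What remains is purely a comparison of $q$-expansions. The $q$-expansion of $\widetilde{Q}(\nabla^{TM},\nabla^{L},P_{i},g,d,\tau)$ was already established in the proof of Theorem 3.8; it is the same power series, from which one now extracts the degree-$11$ component in each coefficient. Reading off its constant term identifies the leading coefficient form
\begin{align}
\lambda=\{e^{\frac{1}{720}c_{2}(W_{i})}\widehat{A}(TM,\nabla^{TM}){\rm exp}(\tfrac{c}{2}){\rm ch}(\triangle(E),g^{\triangle(E)},d)\}^{(11)}.\nonumber
\end{align}
Matching the coefficient of $q$ on both sides then yields the first displayed identity, carrying the numerical factor $-264$, while matching the coefficient of $q^2$ yields the second displayed identity, carrying the factor $-135432$.

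I do not expect any conceptual obstacle: Theorem 3.6 already supplies modularity, and the one-dimensionality of the weight-$10$ space forces the expression to be a scalar multiple of $E_4(\tau)E_6(\tau)$. The only delicate point is the faithful transcription, in degree $11$, of the $q$- and $q^2$-coefficients of the series from the proof of Theorem 3.8 — in particular keeping straight the $E_8$-bundle Chern characters ${\rm ch}(-8+W_i)$ and ${\rm ch}(20-8W_i+\overline{W_{i}})$, together with the two lowest terms $-\tfrac{1}{30}c_2(W_i)$ and $\tfrac{1}{2}(-6+\tfrac{1}{30}c_2(W_i))\cdot\tfrac{1}{30}c_2(W_i)$ of the exponential $e^{\frac{1}{24}E_2(\tau)\cdot\frac{1}{30}c_2(W_i)}$, as they are distributed across the various characteristic-form products.
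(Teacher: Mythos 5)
Your proposal is correct and is essentially the paper's own (largely implicit) argument: the paper proves the $7$-dimensional spin$^c$ case by writing out the $q$-expansion of $\widetilde{Q}(\nabla^{TM},\nabla^{L},P_{i},g,d,\tau)$ in general degree $(4k-1)$ and then disposes of the $11$-dimensional case with the remark that one argues similarly, i.e.\ exactly as you do, via Theorem 3.6 (weight $2k+4=10$ for $k=3$), the one-dimensionality of the weight-$10$ space spanned by $E_4(\tau)E_6(\tau)=1-264q-135432q^2+\cdots$, and comparison of the constant, $q$ and $q^2$ coefficients. The only caveat is bibliographic rather than mathematical: the expansion you need is the one displayed in the proof following Theorem 3.7 (whose closing lines contain the paper's own misprints about ``weight $10$'' in dimension $7$), not a separate computation attached to this theorem.
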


\subsection{Modular forms and Witten genus in $(4k+1)$-dimensional spin$^c$ manifolds}\hfill\\

Let $M$ be a closed oriented ${\rm spin}^{c}$ manifold of dimension $4k+1$.

Choose $\Theta^*(T_{\mathbf{C}}M,L_{\bf{R}}\otimes\mathbf{C})$ be the virtual complex vector bundle over $M$ defined by
\begin{equation}
\begin{aligned}
\Theta^*(T_{\mathbf{C}}M,L_{\bf{R}}\otimes\mathbf{C})=\bigotimes _{n=1}^{\infty}S_{q^n}(\widetilde{T_{\mathbf{C}}M})\otimes
\bigotimes _{m=1}^{\infty}\wedge_{-q^m}(\widetilde{L_{\bf{R}}\otimes\mathbf{C}}).
\end{aligned}
\end{equation}

For $c=0$, the Witten bundle is given by $\Theta^*(T_{\mathbf{C}}M, L_{\mathbf{R}}\otimes \mathbf{C}).$
With this bundle, we define the generalized Witten form by
\begin{equation}
\begin{aligned}
\bar{Q}(\nabla^{TM},\nabla^{L},P_{i},g,d,\tau)&=\{e^{\frac{1}{24}E_{2}(\tau)\cdot\frac{1}{30}c_{2}(W_{i})} \widehat{A}(TM,\nabla^{TM}){\rm ch}(\Theta^*(T_{\mathbf{C}}M,L_{\bf{R}}\otimes\mathbf{C}))\\
&\cdot{\rm exp}(\frac{c}{2}){\rm ch}(Q(E),g^{Q(E)},d,\tau)\varphi(\tau)^{(8)}ch(\mathcal{V}_{i})\}^{(4k+1)}.
\end{aligned}
\end{equation}

Based on this definition and
\begin{equation}
\begin{aligned}
&\bar{Q}(\nabla^{TM},\nabla^{L},P_{i},g,d,\tau)\\
=&\{ e^{\frac{1}{720}c_{2}(W_{i})}{\widehat{A}(TM,\nabla^{TM})}{\rm exp}(\frac{c}{2}){\rm ch}(\triangle(E),g^{\triangle(E)},d) \}^{(4k+1)}+q \{ {\widehat{A}(TM,\nabla^{TM})}\\
&\cdot{\rm exp}(\frac{c}{2}){\rm ch}(\triangle(E),g^{\triangle(E)},d)[-e^{\frac{1}{720}c_{2}(W_{i})}\cdot\tfrac{1}{30}c_{2}(W_{i})+e^{\frac{1}{720}c_{2}(W_{i})}{\rm ch}(-8+W_i)\\
&+e^{\frac{1}{720}c_{2}(W_{i})}{\rm ch}(\widetilde{T_\mathbf{C}M}+\widetilde{E_\mathbf{C}}-\widetilde{E_\mathbf{C}}
    \otimes\widetilde{E_\mathbf{C}}+2\wedge^2\widetilde{E_\mathbf{C}}-\widetilde{L_{\bf{R}}\otimes\mathbf{C}})]\}^{(4k+1)}\\
    &+q^2\{ {\widehat{A}(TM,\nabla^{TM})}{\rm exp}(\frac{c}{2}){\rm ch}(\triangle(E),g^{\triangle(E)},d)[e^{\frac{1}{720}c_{2}(W_{i})}\cdot\tfrac{1}{2}(-6+\tfrac{1}{30}c_{2}(W_{i}))\\
    &\cdot\tfrac{1}{30}c_{2}(W_{i})-e^{\frac{1}{720}c_{2}(W_{i})}\cdot\tfrac{1}{30}c_{2}(W_{i}){\rm ch}(-8+W_i)+e^{\frac{1}{720}c_{2}(W_{i})}{\rm ch}(20-8W_i+\overline{W_{i}}) \end{aligned}
\end{equation}
\begin{equation}
\begin{aligned}
    &-e^{\frac{1}{720}c_{2}(W_{i})}\cdot\tfrac{1}{30}c_{2}(W_{i}){\rm ch}(\widetilde{T_\mathbf{C}M}+\widetilde{E_\mathbf{C}}-\widetilde{E_\mathbf{C}}
    \otimes\widetilde{E_\mathbf{C}}+2\wedge^2\widetilde{E_\mathbf{C}}-\widetilde{L_{\bf{R}}\otimes\mathbf{C}})\\
    &+e^{\frac{1}{720}c_{2}(W_{i})}{\rm ch}(-8+W_i){\rm ch}(\widetilde{T_\mathbf{C}M}+\widetilde{E_\mathbf{C}}-\widetilde{E_\mathbf{C}}
    \otimes\widetilde{E_\mathbf{C}}+2\wedge^2\widetilde{E_\mathbf{C}}-\widetilde{L_{\bf{R}}\otimes\mathbf{C}})\\
    &+e^{\frac{1}{720}c_{2}(W_{i})}{\rm ch}(\widetilde{T_\mathbf{C}M}+S^2\widetilde{T_\mathbf{C}M}+\widetilde{T_\mathbf{C}M}\otimes(\widetilde{E_\mathbf{C}}-\widetilde{E_\mathbf{C}}
    \otimes\widetilde{E_\mathbf{C}}+2\wedge^2\widetilde{E_\mathbf{C}}\\
    &-\widetilde{L_{\bf{R}}\otimes\mathbf{C}})-\widetilde{L_{\bf{R}}\otimes\mathbf{C}}+\wedge^2\widetilde{L_{\bf{R}}\otimes\mathbf{C}}-\widetilde{L_{\bf{R}}\otimes\mathbf{C}} \otimes(2\wedge^2\widetilde{E_\mathbf{C}}-\widetilde{E_\mathbf{C}}
    \otimes\widetilde{E_\mathbf{C}}+\widetilde{E_\mathbf{C}})\\
    &+\wedge^2\widetilde{E_\mathbf{C}}\otimes\wedge^2\widetilde{E_\mathbf{C}}
+2\wedge^4\widetilde{E_\mathbf{C}}-2\widetilde{E_\mathbf{C}}\otimes\wedge^3\widetilde{E_\mathbf{C}}
+2\widetilde{E_\mathbf{C}}\otimes\wedge^2\widetilde{E_\mathbf{C}}-\widetilde{E_\mathbf{C}}\otimes
\widetilde{E_\mathbf{C}}\otimes\widetilde{E_\mathbf{C}}\\
&+\widetilde{E_\mathbf{C}}+\wedge^2\widetilde{E_\mathbf{C}})]\}^{(4k+1)}+\cdot\cdot\cdot,\nonumber
\end{aligned}
\end{equation}
 we obtain the following three theorems.

\begin{thm}
Let ${\rm dim}M=4k+1,$ for $k\leq 3.$ If $p_1(L)-p_1(M)=0$ and $c_3(E,g,d)=0,$ then $\bar{Q}(\nabla^{TM},\nabla^{L},P_{i},g,d,\tau)$ is a modular form over $\mathrm{SL}(2,\mathbf{Z})$ with the weight $2k+4.$
\end{thm}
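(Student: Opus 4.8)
The plan is to follow the template already used in the spin case and in the $(4k-1)$-dimensional spin$^{c}$ case, separating the factor carrying the map $g$ from a purely geometric factor. First I would write
\[
\bar{Q}(\nabla^{TM},\nabla^{L},P_{i},g,d,\tau)=\bar{Q}(M,L,P_{i},\tau)\cdot{\rm ch}(Q(E),g^{Q(E)},d,\tau)^{(4r-1)},
\]
where
\[
\bar{Q}(M,L,P_{i},\tau)=\Big\{e^{\frac{1}{24}E_{2}(\tau)\cdot\frac{1}{30}c_{2}(W_{i})}\widehat{A}(TM,\nabla^{TM}){\rm ch}(\Theta^{*}(T_{\mathbf{C}}M,L_{\bf{R}}\otimes\mathbf{C})){\rm exp}(\tfrac{c}{2})\varphi(\tau)^{(8)}ch(\mathcal{V}_{i})\Big\}^{(4p+2)}
\]
is the even-degree geometric factor, extracted in degree $4p+2$, and the odd Chern character is extracted in degree $4r-1$, with $p+r=k$ so that the total degree equals $4k+1$. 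The factor depending on $g$ is already under control: by the Lemma, when $c_{3}(E,g,d)=0$ it is invariant under $\tau\mapsto\tau+1$ and transforms by $\tau^{2r}$ under $\tau\mapsto-1/\tau$. It therefore suffices to prove that $\bar{Q}(M,L,P_{i},\tau)$, in degree $4p+2$, is a modular form of weight $2p+4$.

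For the geometric factor I would introduce the formal Chern roots $\{\pm 2\pi\sqrt{-1}x_{j}\}_{1\le j\le 2k}$ of $T_{\mathbf{C}}M$ (together with the zero root forced by the odd dimension) and $y=-\frac{\sqrt{-1}}{2\pi}c$ for $L$, and rewrite the factor through the Jacobi theta functions: the subbundle $\bigotimes_{n}S_{q^{n}}(\widetilde{T_{\mathbf{C}}M})$ produces, in combination with $\widehat{A}(TM,\nabla^{TM})$, the Witten product $\prod_{j}\frac{x_{j}\theta'(0,\tau)}{\theta(x_{j},\tau)}$; the single factor $\bigotimes_{m}\wedge_{-q^{m}}(\widetilde{L_{\bf{R}}\otimes\mathbf{C}})$ together with ${\rm exp}(\frac{c}{2})$ produces a factor built from $\theta(y,\tau)$ and $\theta'(0,\tau)$; and $\varphi(\tau)^{(8)}ch(\mathcal{V}_{i})$ is the weight-$4$ combination $\frac{1}{2}\big(\prod_{l}\theta_{1}(y_{l}^{i},\tau)+\prod_{l}\theta_{2}(y_{l}^{i},\tau)+\prod_{l}\theta_{3}(y_{l}^{i},\tau)\big)$. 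In this normalization the degree-$2a$ component of the $T_{\mathbf{C}}M$ part has weight $a$, the $\theta'(0,\tau)$ in the denominator of the $L$ part lowers the naive weight by one so that its degree-$2a$ component has weight $a-1$, and the $E_{8}$ part has weight $4$; adding degrees to $4p+2$ yields total weight $2p+4$.

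Next I would apply the transformation laws (2.16)--(2.28) for the theta functions and their derivatives together with $E_{2}(\tau+1)=E_{2}(\tau)$ and $E_{2}(-1/\tau)=\tau^{2}E_{2}(\tau)-\frac{6\sqrt{-1}\tau}{\pi}$. Invariance under $\tau\mapsto\tau+1$ is immediate, since the $e^{\pi\sqrt{-1}/4}$ factors cancel between $\theta'(0,\tau)$ and each $\theta(\cdot,\tau)$, the interchange $\theta_{2}\leftrightarrow\theta_{3}$ fixes the $E_{8}$ combination, and both $E_{2}$ and the $g$-factor are invariant. Under $\tau\mapsto-1/\tau$ each group of factors contributes the expected power of $\tau$ and an exponential anomaly of the form $e^{\pi\sqrt{-1}\tau(\cdots)}$: the $T_{\mathbf{C}}M$ part yields a term proportional to $p_{1}(M)$, the $L$ part a term proportional to $p_{1}(L)$, and the $E_{8}$ part a term proportional to $\sum_{l}(2\pi\sqrt{-1}y_{l}^{i})^{2}=-\frac{1}{30}c_{2}(W_{i})$. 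The hypothesis $p_{1}(L)-p_{1}(M)=0$ cancels the first two anomalies against one another, while the prefactor $e^{\frac{1}{24}E_{2}(\tau)\cdot\frac{1}{30}c_{2}(W_{i})}$ is designed so that the inhomogeneous term $-\frac{6\sqrt{-1}\tau}{\pi}$ in the transformation of $E_{2}$ absorbs the $E_{8}$ anomaly. What remains is exactly $\tau^{2p+4}$, and multiplying by $\tau^{2r}$ from the Lemma gives $\tau^{2p+2r+4}=\tau^{2k+4}$.

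The main obstacle will be the precise bookkeeping of this simultaneous anomaly cancellation. One must check that the coefficient of $p_{1}(L)$ produced by the single factor $\bigotimes_{m}\wedge_{-q^{m}}(\widetilde{L_{\bf{R}}\otimes\mathbf{C}})$ in $\Theta^{*}$ matches exactly the coefficient of $p_{1}(M)$ coming from $T_{\mathbf{C}}M$, which is what makes the correct hypothesis $p_{1}(L)-p_{1}(M)=0$ rather than the $3p_{1}(L)-p_{1}(M)=0$ forced by the three $L$-factors of the $(4k-1)$-dimensional spin$^{c}$ bundle; one must also verify that the $E_{2}$-prefactor cancels the $E_{8}$ term to all orders in $q$. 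Once the three anomalies are seen to vanish together, modularity of weight $2k+4$ follows, and the restriction $k\le 3$ is simply the range of dimensions in which the $q$-expansion of $\bar{Q}$ displayed above is subsequently used to read off explicit cancellation formulas by comparison with $E_{4}$ and $E_{6}$.
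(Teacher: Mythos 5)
Your overall strategy is the correct one and is exactly the template the paper itself follows for Theorem 3.2 (the paper offers no separate argument for this statement beyond ``we obtain the following three theorems''): split off ${\rm ch}(Q(E),g^{Q(E)},d,\tau)^{(4r-1)}$, which Lemma 3.1 controls, rewrite the remaining geometric factor through theta functions, and cancel the three exponential anomalies using $p_1(L)-p_1(M)=0$ together with the $E_2$-prefactor. Your explanation of why the hypothesis is $p_1(L)-p_1(M)=0$ here, versus $3p_1(L)-p_1(M)=0$ for the three $L$-factors of the $(4k-1)$-dimensional spin$^c$ bundle, is also right, as is the weight count $2r+2p+4=2k+4$.

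There is, however, one step that fails as literally written: the claim that ${\rm exp}(\frac{c}{2})\,{\rm ch}\big(\bigotimes_{m}\wedge_{-q^{m}}(\widetilde{L_{\bf{R}}\otimes\mathbf{C}})\big)$ is ``a factor built from $\theta(y,\tau)$ and $\theta'(0,\tau)$'' whose degree-$2a$ component has weight $a-1$. A direct computation gives
\[
e^{c/2}\prod_{m\ge 1}\frac{(1-e^{c}q^{m})(1-e^{-c}q^{m})}{(1-q^{m})^{2}}
=\frac{2\pi\sqrt{-1}}{1-e^{-c}}\cdot\frac{\theta(y,\tau)}{\theta'(0,\tau)},
\]
and the Todd-type prefactor $\frac{2\pi\sqrt{-1}}{1-e^{-c}}$ does not transform under $SL(2,\mathbf{Z})$; the degree-$2a$ component of this product is therefore a mixture of weights, and the scaling argument under $\tau\mapsto-1/\tau$ breaks down for it. The missing idea is a parity observation. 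Write $e^{c/2}=\cosh(c/2)+\sinh(c/2)$: the even part contributes only in form degrees $\equiv 0\pmod 4$ and the odd part only in degrees $\equiv 2\pmod 4$. Since the $T_{\mathbf{C}}M$-part and the $E_8$-part are even in their roots (degrees $\equiv 0\pmod 4$), and ${\rm ch}(Q(E),g^{Q(E)},d,\tau)$ is concentrated in degrees $\equiv 3\pmod 4$ (each $\theta_\nu'/\theta_\nu$ is odd, so $A$ is an odd series in the $2$-form-valued $R_u$), only the components of the $L$-factor in degrees $\equiv 2\pmod 4$ can contribute to the total degree $4k+1$. In those degrees $e^{c/2}$ may be replaced by $\sinh(c/2)=\frac12(e^{c/2}-e^{-c/2})$, and only then does one obtain the clean identity $\sinh(c/2)\prod_{m}(\cdots)=\pi\sqrt{-1}\,\theta(y,\tau)/\theta'(0,\tau)$ on which your weight bookkeeping and anomaly cancellation actually rely. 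With this insertion your proof closes; without it the key transformation law is asserted for a factor for which it is false.
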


\begin{thm}
For $9$-dimensional spin manifold, when $p_1(L)-p_1(M)=0$ and $c_3(E,g,d)=0,$ we conclude that
 \begin{equation}
\begin{aligned}
&\{ {\widehat{A}(TM,\nabla^{TM})}{\rm exp}(\frac{c}{2}){\rm ch}(\triangle(E),g^{\triangle(E)},d)[-e^{\frac{1}{720}c_{2}(W_{i})}\cdot\tfrac{1}{30}c_{2}(W_{i})\\
&+e^{\frac{1}{720}c_{2}(W_{i})}{\rm ch}(-8+W_i)+e^{\frac{1}{720}c_{2}(W_{i})}{\rm ch}(\widetilde{T_\mathbf{C}M}+\widetilde{E_\mathbf{C}}-\widetilde{E_\mathbf{C}}
    \otimes\widetilde{E_\mathbf{C}}\\
    &+2\wedge^2\widetilde{E_\mathbf{C}}-\widetilde{L_{\bf{R}}\otimes\mathbf{C}})]\}^{(9)}=480\{ e^{\frac{1}{720}c_{2}(W_{i})}{\widehat{A}(TM,\nabla^{TM})}{\rm exp}(\frac{c}{2})\\
    &\cdot{\rm ch}(\triangle(E),g^{\triangle(E)},d) \}^{(9)},
\end{aligned}
\end{equation}
  and
 \begin{equation}
\begin{aligned}
&\{ {\widehat{A}(TM,\nabla^{TM})}{\rm exp}(\frac{c}{2}){\rm ch}(\triangle(E),g^{\triangle(E)},d)[e^{\frac{1}{720}c_{2}(W_{i})}\cdot\tfrac{1}{2}(-6+\tfrac{1}{30}c_{2}(W_{i}))\\
    &\cdot\tfrac{1}{30}c_{2}(W_{i})-e^{\frac{1}{720}c_{2}(W_{i})}\cdot\tfrac{1}{30}c_{2}(W_{i}){\rm ch}(-8+W_i)+e^{\frac{1}{720}c_{2}(W_{i})}{\rm ch}(20-8W_i+\overline{W_{i}})\\
    &-e^{\frac{1}{720}c_{2}(W_{i})}\cdot\tfrac{1}{30}c_{2}(W_{i}){\rm ch}(\widetilde{T_\mathbf{C}M}+\widetilde{E_\mathbf{C}}-\widetilde{E_\mathbf{C}}
    \otimes\widetilde{E_\mathbf{C}}+2\wedge^2\widetilde{E_\mathbf{C}}-\widetilde{L_{\bf{R}}\otimes\mathbf{C}})\\
    &+e^{\frac{1}{720}c_{2}(W_{i})}{\rm ch}(-8+W_i){\rm ch}(\widetilde{T_\mathbf{C}M}+\widetilde{E_\mathbf{C}}-\widetilde{E_\mathbf{C}}
    \otimes\widetilde{E_\mathbf{C}}+2\wedge^2\widetilde{E_\mathbf{C}}-\widetilde{L_{\bf{R}}\otimes\mathbf{C}})\\
    &+e^{\frac{1}{720}c_{2}(W_{i})}{\rm ch}(\widetilde{T_\mathbf{C}M}+S^2\widetilde{T_\mathbf{C}M}+\widetilde{T_\mathbf{C}M}\otimes(\widetilde{E_\mathbf{C}}-\widetilde{E_\mathbf{C}}
    \otimes\widetilde{E_\mathbf{C}}+2\wedge^2\widetilde{E_\mathbf{C}}\\
    &-\widetilde{L_{\bf{R}}\otimes\mathbf{C}})-\widetilde{L_{\bf{R}}\otimes\mathbf{C}}+\wedge^2\widetilde{L_{\bf{R}}\otimes\mathbf{C}}-\widetilde{L_{\bf{R}}\otimes\mathbf{C}} \otimes(2\wedge^2\widetilde{E_\mathbf{C}}-\widetilde{E_\mathbf{C}}
    \otimes\widetilde{E_\mathbf{C}}+\widetilde{E_\mathbf{C}})\\
    &+\wedge^2\widetilde{E_\mathbf{C}}\otimes\wedge^2\widetilde{E_\mathbf{C}}
+2\wedge^4\widetilde{E_\mathbf{C}}-2\widetilde{E_\mathbf{C}}\otimes\wedge^3\widetilde{E_\mathbf{C}}
+2\widetilde{E_\mathbf{C}}\otimes\wedge^2\widetilde{E_\mathbf{C}}-\widetilde{E_\mathbf{C}}\otimes
\widetilde{E_\mathbf{C}}\otimes\widetilde{E_\mathbf{C}}\\
&+\widetilde{E_\mathbf{C}}+\wedge^2\widetilde{E_\mathbf{C}})]\}^{(9)}=61920\{ e^{\frac{1}{720}c_{2}(W_{i})}{\widehat{A}(TM,\nabla^{TM})}{\rm exp}(\frac{c}{2})\\
    &\cdot{\rm ch}(\triangle(E),g^{\triangle(E)},d) \}^{(9)}.
\end{aligned}
\end{equation}
\end{thm}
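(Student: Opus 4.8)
The plan is to follow verbatim the strategy used in the proofs of Theorems 3.3--3.5 and 3.8: exploit the rigidity of the space of modular forms of fixed weight over $\mathrm{SL}(2,\mathbf{Z})$, and then extract the desired identities by comparing $q$-coefficients against the explicit expansion of $\bar{Q}(\nabla^{TM},\nabla^{L},P_{i},g,d,\tau)$ already displayed above.

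First I would apply Theorem 3.10 with $k=2$. Since $\dim M = 9 = 4k+1$ and the hypotheses $p_1(L)-p_1(M)=0$ and $c_3(E,g,d)=0$ are in force, $\bar{Q}(\nabla^{TM},\nabla^{L},P_{i},g,d,\tau)$ is a modular form over $\mathrm{SL}(2,\mathbf{Z})$ of weight $2k+4=8$. The decisive structural input is that the space of modular forms of weight $8$ over $\mathrm{SL}(2,\mathbf{Z})$ is one-dimensional and spanned by $E_4(\tau)^2=1+480q+61920q^2+\cdots$. Consequently there is a single characteristic form $\lambda$, independent of $\tau$, such that $\bar{Q}(\nabla^{TM},\nabla^{L},P_{i},g,d,\tau)=\lambda\, E_4(\tau)^2$.

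Next I would pin down $\lambda$ and match coefficients order by order. The constant term of the displayed expansion forces
\[
\lambda=\{e^{\frac{1}{720}c_{2}(W_{i})}\,\widehat{A}(TM,\nabla^{TM})\,\mathrm{exp}(\tfrac{c}{2})\,\mathrm{ch}(\triangle(E),g^{\triangle(E)},d)\}^{(9)}.
\]
Equating the coefficient of $q$ in the expansion of $\bar{Q}$ with $480\,\lambda$ then produces the first cancellation formula, while equating the coefficient of $q^2$ with $61920\,\lambda$ produces the second. Both equalities are purely formal manipulations of characteristic forms once the $q$-expansion is in hand.

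The argument requires no new analytic ingredient; the sole labor is the bookkeeping of matching the lengthy Chern-character expressions term by term, and this is already encoded in the displayed expansion, so no genuine obstacle arises at the level of computation. The one point I would verify with care is that the hypothesis here is $p_1(L)-p_1(M)=0$, rather than the $3p_1(L)-p_1(M)=0$ appearing in the $(4k-1)$-dimensional case: this is precisely the modularity condition demanded by Theorem 3.10 in the $(4k+1)$-dimensional $\mathrm{spin}^c$ setting, and it traces back to the use of $\Theta^*(T_{\mathbf{C}}M,L_{\mathbf{R}}\otimes\mathbf{C})$ in place of $\Theta(T_{\mathbf{C}}M,L_{\mathbf{R}}\otimes\mathbf{C})$.
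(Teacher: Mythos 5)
Your proposal is correct and follows the same route the paper takes for this family of results: invoke the modularity statement (Theorem 3.10 with $k=2$, weight $8$), use that the weight-$8$ modular forms over $\mathrm{SL}(2,\mathbf{Z})$ are spanned by $E_4(\tau)^2=1+480q+61920q^2+\cdots$, identify the multiplier $\lambda$ with the constant term of the displayed $q$-expansion of $\bar{Q}(\nabla^{TM},\nabla^{L},P_{i},g,d,\tau)$, and read off the two identities from the $q$- and $q^2$-coefficients. The paper leaves this proof implicit (deferring to the expansion and the analogous arguments for Theorems 3.3 and 3.8), and your write-up supplies exactly that argument.
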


\begin{thm}
For $13$-dimensional spin manifold, when $p_1(L)-p_1(M)=0$ and $c_3(E,g,d)=0,$ we conclude that
 \begin{equation}
\begin{aligned}
&\{ {\widehat{A}(TM,\nabla^{TM})}{\rm exp}(\frac{c}{2}){\rm ch}(\triangle(E),g^{\triangle(E)},d)[-e^{\frac{1}{720}c_{2}(W_{i})}\cdot\tfrac{1}{30}c_{2}(W_{i})\\
&+e^{\frac{1}{720}c_{2}(W_{i})}{\rm ch}(-8+W_i)+e^{\frac{1}{720}c_{2}(W_{i})}{\rm ch}(\widetilde{T_\mathbf{C}M}+\widetilde{E_\mathbf{C}}-\widetilde{E_\mathbf{C}}
    \otimes\widetilde{E_\mathbf{C}}\\
    &+2\wedge^2\widetilde{E_\mathbf{C}}-\widetilde{L_{\bf{R}}\otimes\mathbf{C}})]\}^{(13)}=-264\{ e^{\frac{1}{720}c_{2}(W_{i})}{\widehat{A}(TM,\nabla^{TM})}{\rm exp}(\frac{c}{2})\\
    &\cdot{\rm ch}(\triangle(E),g^{\triangle(E)},d) \}^{(13)},
\end{aligned}
\end{equation}
  and
 \begin{equation}
\begin{aligned}
&\{ {\widehat{A}(TM,\nabla^{TM})}{\rm exp}(\frac{c}{2}){\rm ch}(\triangle(E),g^{\triangle(E)},d)[e^{\frac{1}{720}c_{2}(W_{i})}\cdot\tfrac{1}{2}(-6+\tfrac{1}{30}c_{2}(W_{i}))\\
    &\cdot\tfrac{1}{30}c_{2}(W_{i})-e^{\frac{1}{720}c_{2}(W_{i})}\cdot\tfrac{1}{30}c_{2}(W_{i}){\rm ch}(-8+W_i)+e^{\frac{1}{720}c_{2}(W_{i})}{\rm ch}(20-8W_i+\overline{W_{i}})\\
    &-e^{\frac{1}{720}c_{2}(W_{i})}\cdot\tfrac{1}{30}c_{2}(W_{i}){\rm ch}(\widetilde{T_\mathbf{C}M}+\widetilde{E_\mathbf{C}}-\widetilde{E_\mathbf{C}}
    \otimes\widetilde{E_\mathbf{C}}+2\wedge^2\widetilde{E_\mathbf{C}}-\widetilde{L_{\bf{R}}\otimes\mathbf{C}})\\
    &+e^{\frac{1}{720}c_{2}(W_{i})}{\rm ch}(-8+W_i){\rm ch}(\widetilde{T_\mathbf{C}M}+\widetilde{E_\mathbf{C}}-\widetilde{E_\mathbf{C}}
    \otimes\widetilde{E_\mathbf{C}}+2\wedge^2\widetilde{E_\mathbf{C}}-\widetilde{L_{\bf{R}}\otimes\mathbf{C}})\\
    &+e^{\frac{1}{720}c_{2}(W_{i})}{\rm ch}(\widetilde{T_\mathbf{C}M}+S^2\widetilde{T_\mathbf{C}M}+\widetilde{T_\mathbf{C}M}\otimes(\widetilde{E_\mathbf{C}}-\widetilde{E_\mathbf{C}}
    \otimes\widetilde{E_\mathbf{C}}+2\wedge^2\widetilde{E_\mathbf{C}}\\
    &-\widetilde{L_{\bf{R}}\otimes\mathbf{C}})-\widetilde{L_{\bf{R}}\otimes\mathbf{C}}+\wedge^2\widetilde{L_{\bf{R}}\otimes\mathbf{C}}-\widetilde{L_{\bf{R}}\otimes\mathbf{C}} \otimes(2\wedge^2\widetilde{E_\mathbf{C}}-\widetilde{E_\mathbf{C}}
    \otimes\widetilde{E_\mathbf{C}}+\widetilde{E_\mathbf{C}})\\
    &+\wedge^2\widetilde{E_\mathbf{C}}\otimes\wedge^2\widetilde{E_\mathbf{C}}
+2\wedge^4\widetilde{E_\mathbf{C}}-2\widetilde{E_\mathbf{C}}\otimes\wedge^3\widetilde{E_\mathbf{C}}
+2\widetilde{E_\mathbf{C}}\otimes\wedge^2\widetilde{E_\mathbf{C}}-\widetilde{E_\mathbf{C}}\otimes
\widetilde{E_\mathbf{C}}\otimes\widetilde{E_\mathbf{C}}\\
&+\widetilde{E_\mathbf{C}}+\wedge^2\widetilde{E_\mathbf{C}})]\}^{(13)}=-135432\{ e^{\frac{1}{720}c_{2}(W_{i})}{\widehat{A}(TM,\nabla^{TM})}{\rm exp}(\frac{c}{2})\\
    &\cdot{\rm ch}(\triangle(E),g^{\triangle(E)},d) \}^{(13)}.
\end{aligned}
\end{equation}
\end{thm}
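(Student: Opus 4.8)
The plan is to proceed exactly as in the proofs of Theorems 3.4 and 3.9, specializing the general modularity result to the case $k=3$. First I would invoke Theorem 3.10: since $p_1(L)-p_1(M)=0$ and $c_3(E,g,d)=0$, the form $\bar{Q}(\nabla^{TM},\nabla^{L},P_{i},g,d,\tau)$ is a modular form over $\mathrm{SL}(2,\mathbf{Z})$ of weight $2k+4=10$. The crucial structural input is that the space of weight-$10$ modular forms over $\mathrm{SL}(2,\mathbf{Z})$ is one-dimensional, spanned by $E_4(\tau)E_6(\tau)$. Hence there is a single characteristic form $\lambda$ (a differential form of the appropriate top degree on $M$) such that
\begin{equation}
\bar{Q}(\nabla^{TM},\nabla^{L},P_{i},g,d,\tau)=\lambda\, E_4(\tau)E_6(\tau).
\end{equation}

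Next I would use the explicit $q$-expansion
\begin{equation}
E_4(\tau)E_6(\tau)=1-264q-135432q^2+\cdots,
\end{equation}
already recorded in (3.43), together with the $q$-expansion of $\bar{Q}$ displayed immediately before the statement of the three theorems in this subsection. Reading off the constant ($q^0$) term of that expansion identifies
\begin{equation}
\lambda=\Big\{ e^{\frac{1}{720}c_{2}(W_{i})}{\widehat{A}(TM,\nabla^{TM})}{\rm exp}\big(\tfrac{c}{2}\big){\rm ch}(\triangle(E),g^{\triangle(E)},d) \Big\}^{(13)}.
\end{equation}
Comparing the coefficient of $q^1$ on both sides of the identity then yields $[q^1]\bar{Q}=-264\,\lambda$, which is precisely the first asserted formula (3.65); comparing the coefficient of $q^2$ gives $[q^2]\bar{Q}=-135432\,\lambda$, which is the second formula (3.66).

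I expect no genuine obstacle here: the heavy lifting—establishing $\mathrm{SL}(2,\mathbf{Z})$-modularity via the theta-function factorizations of Section 2 and the odd Chern character Lemma 3.1—has already been carried out in Theorem 3.10, and the full $q$-expansion of $\bar{Q}$ through order $q^2$ has been supplied in the preceding display. What remains is entirely routine bookkeeping: substituting the known expansion $E_4E_6=1-264q-135432q^2+\cdots$ and matching the coefficients of $1$, $q$, and $q^2$. The only point requiring mild care is the consistency of degrees, namely that $\lambda$ is genuinely a degree-$13$ form so that each $\{\,\cdot\,\}^{(13)}$ component is the relevant top-degree piece; this is automatic from the dimension hypothesis $\dim M=13$.
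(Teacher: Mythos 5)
Your proposal is correct and follows exactly the paper's (largely implicit) argument: the paper derives this theorem by combining the modularity statement for $\bar{Q}(\nabla^{TM},\nabla^{L},P_{i},g,d,\tau)$ (weight $2k+4=10$ for $\dim M=13$), the one-dimensionality of the space of weight-$10$ modular forms over $SL(2,\mathbf{Z})$ spanned by $E_4(\tau)E_6(\tau)=1-264q-135432q^2+\cdots$, and the displayed $q$-expansion of $\bar{Q}$, matching coefficients of $q^0$, $q$, and $q^2$ just as you describe.
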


\section{Modular forms and Witten genus associated with $E_{8}\times E_{8}$ bundles}

In this section, we construct modular forms over $SL(2,\mathbf{Z})$ associated with $E_{8}\times E_{8}$ bundles and extend this construction to spin and spin$^c$ manifolds, this leads to the systematic derivation of a new series of cancellation formulas.

\subsection{Modular forms in $(4k-1)$-dimensional spin manifolds}\hfill\\

Set $M$ be a $(4k-1)$-dimensional spin manifold and $\triangle(M)$ be the spinor bundle.
We generalize $Q(\nabla^{TM}, P_{i}, g, d, \tau)$ to the case of $E_{8} \times E_{8}$ bundles and redefine it as follows:
\begin{align}
Q(\nabla^{TM},P_{i},P_{j},g,d,\tau)=&\{e^{\frac{1}{24}E_{2}(\tau)\cdot\frac{1}{30}(c_{2}(W_{i})+c_{2}(W_{j}))} \widehat{A}(TM,\nabla^{TM}){\rm ch}([\triangle(M)\\
&\otimes \Theta_1(T_{C}M)+2^{2k-1}\Theta_2(T_{C}M)+2^{2k-1}\Theta_3(T_{C}M)])\nonumber\\
&{\rm ch}(Q(E),g^{Q(E)},d,\tau)\cdot\varphi(\tau)^{(16)}ch(\mathcal{V}_{i})ch(\mathcal{V}_{j})\}^{(4k-1)}.\nonumber
\end{align}

\begin{thm}
Let ${\rm dim}M=4k-1,$ for $k\leq 4.$ Suppose $g$ has a lift to the spin group ${\rm Spin}(N):g^{\Delta}:M\to {\rm Spin}(N)$. If $c_3(E,g,d)=0$, then $Q(\nabla^{TM},P_{i},P_{j},g,d,\tau)$ is a modular form over $SL(2,{\bf Z})$ with the weight $2k+8$.
\end{thm}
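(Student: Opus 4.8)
The plan is to follow the proof of Theorem 3.2 in its structure, the only genuinely new input being the presence of a second $E_8$ block and the summed second Chern class in the $E_2$ prefactor. First I would peel off the odd Chern character factor. Writing the degree as $4k-1=4p+(4r-1)$ with $p+r=k$, set
\[
Q(\nabla^{TM},P_{i},P_{j},g,d,\tau)=Q(M,P_{i},P_{j},\tau)\cdot{\rm ch}(Q(E),g^{Q(E)},d,\tau)^{(4r-1)},
\]
where $Q(M,P_{i},P_{j},\tau)$ denotes the degree-$4p$ component of
\[
e^{\frac{1}{24}E_{2}(\tau)\cdot\frac{1}{30}(c_{2}(W_{i})+c_{2}(W_{j}))}\,\widehat{A}(TM,\nabla^{TM})\,{\rm ch}(\triangle(M)\otimes\Theta_{1}(T_{\mathbf{C}}M)+2^{2k-1}\Theta_{2}(T_{\mathbf{C}}M)+2^{2k-1}\Theta_{3}(T_{\mathbf{C}}M))\,\varphi(\tau)^{(8)}ch(\mathcal{V}_{i})\,\varphi(\tau)^{(8)}ch(\mathcal{V}_{j}).
\]
By the Lemma, under the hypothesis $c_{3}(E,g,d)=0$ the factor ${\rm ch}(Q(E),g^{Q(E)},d,\tau)^{(4r-1)}$ is a modular form of weight $2r$ over $SL(2,\mathbf{Z})$, so it suffices to show that $Q(M,P_{i},P_{j},\tau)$ is modular of weight $2p+8$.

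Next I would rewrite $Q(M,P_{i},P_{j},\tau)$ in theta form, using the expressions for $\widehat{A}\,{\rm ch}(\triangle(M)\otimes\Theta_{a})$ established in the proof of Theorem 3.2 and applying identity (2.31) to each of $\mathcal{V}_{i}$ and $\mathcal{V}_{j}$. This exhibits it as the degree-$4p$ part of the prefactor times $\prod_{j=1}^{2k-1}\frac{2x_{j}\theta'(0,\tau)}{\theta(x_{j},\tau)}\big(\frac{\theta_{1}(x_{j},\tau)}{\theta_{1}(0,\tau)}+\frac{\theta_{2}(x_{j},\tau)}{\theta_{2}(0,\tau)}+\frac{\theta_{3}(x_{j},\tau)}{\theta_{3}(0,\tau)}\big)$ times $B_{i}(\tau)B_{j}(\tau)$, where $B_{\bullet}(\tau)=\frac{1}{2}\big(\prod_{l=1}^{8}\theta_{1}(y_{l}^{\bullet},\tau)+\prod_{l=1}^{8}\theta_{2}(y_{l}^{\bullet},\tau)+\prod_{l=1}^{8}\theta_{3}(y_{l}^{\bullet},\tau)\big)$.

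Then I would apply the transformation laws (2.16)--(2.24) and (2.27)--(2.28). Under $T$ the $T$-laws leave every symmetric theta-sum appearing here invariant (interchanging the $\theta_{2}$- and $\theta_{3}$-contributions and fixing the $\theta_{1}$-contribution up to the phase $e^{2\pi\sqrt{-1}}=1$), and $E_{2}(\tau+1)=E_{2}(\tau)$; hence $Q(M,P_{i},P_{j},\tau+1)=Q(M,P_{i},P_{j},\tau)$. Under $S$ three mechanisms combine: the tangent product carries no net power of $\tau$ but rescales the Chern roots $x_{j}\mapsto\tau x_{j}$, exactly as in Theorem 3.2; each block $B_{\bullet}$ acquires an intrinsic factor $\tau^{4}$, an exponential anomaly $e^{\frac{\sqrt{-1}\tau}{120\pi}c_{2}(W_{\bullet})}$ computed from (2.32), together with the rescaling $y_{l}^{\bullet}\mapsto\tau y_{l}^{\bullet}$; and the prefactor transforms via (2.28) into $e^{\frac{\tau^{2}}{24}E_{2}(\tau)\cdot\frac{1}{30}(c_{2}(W_{i})+c_{2}(W_{j}))}\,e^{-\frac{\sqrt{-1}\tau}{120\pi}(c_{2}(W_{i})+c_{2}(W_{j}))}$. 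The decisive point is that the single combined anomaly from the prefactor factors, by linearity of the exponent in $c_{2}(W_{i})+c_{2}(W_{j})$, into $e^{-\frac{\sqrt{-1}\tau}{120\pi}c_{2}(W_{i})}e^{-\frac{\sqrt{-1}\tau}{120\pi}c_{2}(W_{j})}$, cancelling the anomalies of both blocks at once.

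After this cancellation, extracting the degree-$4p$ component I would match weights degree by degree, just as in Theorem 3.2: the surviving prefactor $e^{\frac{\tau^{2}}{24}E_{2}\cdot\frac{1}{30}(c_{2}(W_{i})+c_{2}(W_{j}))}$, the $x_{j}$-rescaling, and the $y_{l}^{\bullet}$-rescaling each send a form of degree $d$ to $\tau^{d/2}$ times itself, so these contributions sum to $\tau^{2p}$ over the total degree $4p$, while the two blocks supply the intrinsic $\tau^{4}\cdot\tau^{4}=\tau^{8}$. This yields $Q(M,P_{i},P_{j},-\frac{1}{\tau})=\tau^{2p+8}Q(M,P_{i},P_{j},\tau)$, and combined with the weight $2r$ of the Chern character factor gives total weight $2r+2p+8=2k+8$. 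The main obstacle is precisely the simultaneous cancellation in the third mechanism: one must check that a single $E_{2}$-exponential built from the summed class $c_{2}(W_{i})+c_{2}(W_{j})$ neutralizes both $E_{8}$ anomalies, which is exactly what dictates the form of the prefactor and is resolved by linearity; the remaining degree-to-weight bookkeeping is identical to the single-bundle case, and the restriction $k\leq4$ is inherited from the construction of Theorem 3.2.
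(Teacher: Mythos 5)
Your proposal follows the paper's own proof essentially verbatim: the same factorization into $Q(M,P_{i},P_{j},\tau)\cdot{\rm ch}(Q(E),g^{Q(E)},d,\tau)^{(4r-1)}$, the same appeal to the lemma for the weight-$2r$ odd Chern character factor, and the same theta-function expression with two $E_{8}$ blocks whose $S$-anomalies are cancelled by the $E_{2}$ prefactor built from $c_{2}(W_{i})+c_{2}(W_{j})$. Your write-up is in fact more explicit than the paper's (which only asserts the transformation laws), and the anomaly bookkeeping you supply is correct.
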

\begin{proof}
Choose
\begin{align}
Q(\nabla^{TM},P_{i},P_{j},g,d,\tau)=&Q(M,P_{i},P_{j},\tau)\cdot{\rm ch}(Q(E),g^{Q(E)},d,\tau)^{(4r-1)},
\end{align}
in this way
\begin{align}
Q(M,P_{i},P_{j},\tau)=&\{e^{\frac{1}{24}E_{2}(\tau)\cdot\frac{1}{30}(c_{2}(W_{i})+c_{2}(W_{j}))} \widehat{A}(TM,\nabla^{TM}){\rm ch}([\triangle(M)\\
&\otimes \Theta_1(T_{C}M)+2^{2k-1}\Theta_2(T_{C}M)+2^{2k-1}\Theta_3(T_{C}M)])\nonumber\\
&\cdot\varphi(\tau)^{(16)}ch(\mathcal{V}_{i})ch(\mathcal{V}_{j})\}^{(4p)}\nonumber
\end{align}
So
\begin{equation}
\begin{aligned}
Q(M,P_{i},P_{j},\tau) = & \left\{ e^{\frac{1}{24}E_{2}(\tau)\cdot\frac{1}{30}(c_{2}(W_{i})+c_{2}(W_{j}))} \cdot
\prod_{j=1}^{2k-1}\frac{2 x_j\theta'(0,\tau)}{\theta(x_j,\tau)}
\Big(
\frac{\theta_1(x_j,\tau)}{\theta_1(0,\tau)}
+ \frac{\theta_2(x_j,\tau)}{\theta_2(0,\tau)}\right.  \\
& \left. +\frac{\theta_3(x_j,\tau)}{\theta_3(0,\tau)}\Big)\cdot \frac{1}{2}\Big(\prod_{l=1}^{8}\theta_{1}(y_{l}^{i}, \tau)+\prod_{l=1}^{8}\theta_{2}(y_{l}^{i}, \tau)+\prod_{l=1}^{8}\theta_{3}(y_{l}^{i}, \tau)\Big) \vphantom{\prod_{j=1}^{2k-1}}\right.  \\
& \left. \cdot \frac{1}{2}\Big(\prod_{l=1}^{8}\theta_{1}(y_{l}^{j}, \tau)+\prod_{l=1}^{8}\theta_{2}(y_{l}^{j}, \tau)+\prod_{l=1}^{8}\theta_{3}(y_{l}^{j}, \tau)\Big) \vphantom{\prod_{j=1}^{2k-1}} \right\}^{(4p)}.
    \end{aligned}
  \end{equation}
We check at once that
\begin{align}
&Q(M, P_{i},P_{j},\tau+1) =Q(M, \tau),\\
&Q\Big(M, P_{i},P_{j},-\frac{1}{\tau}\Big) =\tau^{2p+8}Q(M, \tau).
\end{align}
This clearly forces
\begin{align}
&Q(\nabla^{TM},P_{i},P_{j},g,d,\tau+1)=Q(\nabla^{TM},P_{i},g,d,\tau),\\
&Q\Big(\nabla^{TM},P_{i},P_{j},g,d,-\frac{1}{\tau}\Big) =\tau^{2r+2p+8}Q(\nabla^{TM},P_{i},g,d,\tau)=\tau^{2k+8}Q(\nabla^{TM},P_{i},g,d,\tau),
\end{align}
where $c_3(E_C,g,d)=0.$
So $Q(\nabla^{TM},P_{i},P_{j},g,d,\tau)$ is a modular form over $SL(2,{\bf Z})$ with the weight $2r+2p+8=2k+8$.
\end{proof}

\begin{thm}
For $7$-dimensional spin manifold, when $c_3(E,g,d)=0,$ we conclude that
  \begin{equation}
    \begin{aligned}
&\{e^{\frac{1}{720}(c_{2}(W_{i})+c_{2}(W_{j}))}\widehat{A}(TM,\nabla^{TM})[{\rm ch}(\triangle(M)\otimes(2\widetilde{T_\mathbf{C}M}+\wedge^2\widetilde{T_\mathbf{C}M}
+\widetilde{T_\mathbf{C}M}\\
&\otimes\widetilde{T_\mathbf{C}M}+S^2\widetilde{T_\mathbf{C}M})){\rm ch}(\triangle(E),g^{\triangle(E)},d)+{\rm ch}(\triangle(M)\otimes2\widetilde{T_{\mathbf{C}}M}){\rm ch}(\triangle(E)\\
&\otimes(2\wedge^2\widetilde{E_\mathbf{C}}-\widetilde{E_\mathbf{C}}\otimes\widetilde{E_\mathbf{C}}+\widetilde{E_\mathbf{C}}),g,d)+{\rm ch}(\triangle(M)){\rm ch}(\triangle(E)\otimes(\wedge^2\widetilde{E_\mathbf{C}}\\
&\otimes\wedge^2\widetilde{E_\mathbf{C}}
+2\wedge^4\widetilde{E_\mathbf{C}}-2\widetilde{E_\mathbf{C}}\otimes\wedge^3\widetilde{E_\mathbf{C}}
+2\widetilde{E_\mathbf{C}}\otimes\wedge^2\widetilde{E_\mathbf{C}}-\widetilde{E_\mathbf{C}}\otimes\widetilde{E_\mathbf{C}}\otimes\widetilde{E_\mathbf{C}}\\
&+\widetilde{E_\mathbf{C}}+\wedge^2\widetilde{E_\mathbf{C}}),g,d)]+8e^{\frac{1}{720}(c_{2}(W_{i})+c_{2}(W_{j}))}\widehat{A}(TM,\nabla^{TM})[{\rm ch}(\wedge^4\widetilde{T_\mathbf{C}M}\\
&+\wedge^2\widetilde{T_\mathbf{C}M}\otimes\widetilde{T_\mathbf{C}M}+\widetilde{T_\mathbf{C}M}\otimes\widetilde{T_\mathbf{C}M}+S^2\widetilde{T_\mathbf{C}M}+\widetilde{T_\mathbf{C}M}){\rm ch}(\triangle(E),g^{\triangle(E)},d)\\
&+{\rm ch}(\widetilde{T_{\mathbf{C}}M}+\wedge^2\widetilde{T_{\mathbf{C}}M}){\rm ch}(\triangle(E)\otimes(2\wedge^2\widetilde{E_\mathbf{C}}-\widetilde
{E_\mathbf{C}}\otimes\widetilde{E_\mathbf{C}}+\widetilde{E_\mathbf{C}}),g,d)\\
&+{\rm ch}(\triangle(E)\otimes(\wedge^2\widetilde{E_\mathbf{C}}\otimes\wedge^2
\widetilde{E_\mathbf{C}}
+2\wedge^4\widetilde{E_\mathbf{C}}-2\widetilde{E_\mathbf{C}}\otimes\wedge^3
\widetilde{E_\mathbf{C}}
+2\widetilde{E_\mathbf{C}}\otimes\wedge^2\widetilde{E_\mathbf{C}}\\
&-\widetilde{E_\mathbf{C}}\otimes\widetilde{E_\mathbf{C}}\otimes\widetilde{E_\mathbf{C}}
+\widetilde{E_\mathbf{C}}+\wedge^2\widetilde{E_\mathbf{C}}),g,d)]+e^{\frac{1}{720}(c_{2}(W_{i})+c_{2}(W_{j}))}[{\rm ch}(W_i+W_j-16)\\
&-\tfrac{1}{30}(c_{2}(W_{i})+c_{2}(W_{j}))][\widehat{A}(TM,\nabla^{TM}){\rm ch}(\triangle(M)\otimes 2\widetilde{T_{\mathbf{C}}M}){\rm ch}(\triangle(E),g^{\triangle(E)},d)\\
&+\widehat{A}(TM,\nabla^{TM}){\rm ch}(\triangle(M)){\rm ch}(\Delta(E)\otimes(2\wedge^2\widetilde{E_\mathbf{C}}-\widetilde{E_\mathbf{C}}\otimes\widetilde{E_\mathbf{C}}+\widetilde{E_\mathbf{C}}),g,d)\\
&+8\widehat{A}(TM,\nabla^{TM}){\rm ch}(\widetilde{T_{\mathbf{C}}M}+\wedge^2\widetilde{T_{\mathbf{C}}M}){\rm ch}(\triangle(E),g^{\triangle(E)},d)+8\widehat{A}(TM,\nabla^{TM})\\
&\cdot{\rm ch}(\Delta(E)\otimes(2\wedge^2\widetilde{E_\mathbf{C}}-\widetilde{E_\mathbf{C}}
    \otimes\widetilde{E_\mathbf{C}}+\widetilde{E_\mathbf{C}}),g,d)]+e^{\frac{1}{720}(c_{2}(W_{i})+c_{2}(W_{j}))}\\
    &\cdot[{\rm ch}(-16W_i-16W_j+W_iW_j+\overline{W_{i}}+\overline{W_{j}}+104)-\tfrac{1}{30}(c_{2}(W_{i})+c_{2}(W_{j}))\\
    &\cdot{\rm ch}(W_i+W_j-16)+\tfrac{1}{2}(-6+\tfrac{1}{30}(c_{2}(W_{i})+c_{2}(W_{j})))\cdot\tfrac{1}{30}(c_{2}(W_{i})+c_{2}(W_{j}))]\\
    &\cdot[{\widehat{A}(TM,\nabla^{TM})}{\rm ch}(\triangle(M)){\rm ch}(\triangle(E),g^{\triangle(E)},d) +8{\widehat{A}(TM,\nabla^{TM})}\\
    &\cdot{\rm ch}(\triangle(E),g^{\triangle(E)},d)]\}^{(7)}=196560 \{ e^{\frac{1}{720}(c_{2}(W_{i})+c_{2}(W_{j}))}[{\widehat{A}(TM,\nabla^{TM})}{\rm ch}(\triangle(M))\\
    &\cdot{\rm ch}(\triangle(E),g^{\triangle(E)},d) +8{\widehat{A}(TM,\nabla^{TM})}{\rm ch}(\triangle(E),g^{\triangle(E)},d)]\}^{(7)}-24\{ e^{\frac{1}{720}c_{2}(W_{i})}\\
    &\cdot e^{\frac{1}{720}c_{2}(W_{j})}{\widehat{A}(TM,\nabla^{TM})}[{\rm ch}(\triangle(M)\otimes 2\widetilde{T_{\mathbf{C}}M}){\rm ch}(\triangle(E),g^{\triangle(E)},d)+{\rm ch}(\triangle(M))\\
    &\cdot{\rm ch}(\Delta(E)\otimes(2\wedge^2\widetilde{E_\mathbf{C}}-\widetilde{E_\mathbf{C}}
    \otimes\widetilde{E_\mathbf{C}}+\widetilde{E_\mathbf{C}}),g,d)]+8e^{\frac{1}{720}(c_{2}(W_{i})+c_{2}(W_{j}))}\\
    &\cdot\widehat{A}(TM,\nabla^{TM})[{\rm ch}(\widetilde{T_{\mathbf{C}}M}+\wedge^2\widetilde{T_{\mathbf{C}}M}){\rm ch}(\triangle(E),g^{\triangle(E)},d)+{\rm ch}(\Delta(E)\otimes(2\wedge^2\widetilde{E_\mathbf{C}}\\
    &-\widetilde{E_\mathbf{C}} \otimes\widetilde{E_\mathbf{C}}+\widetilde{E_\mathbf{C}}),g,d)]+e^{\frac{1}{720}(c_{2}(W_{i})+c_{2}(W_{j}))}[{\rm ch}(W_i+W_j-16)-\tfrac{1}{30}(c_{2}(W_{i})\\
    &+c_{2}(W_{j}))][{\widehat{A}(TM,\nabla^{TM})}{\rm ch}(\triangle(M)){\rm ch}(\triangle(E),g^{\triangle(E)},d)+8{\widehat{A}(TM,\nabla^{TM})}\\
    &\cdot{\rm ch}(\triangle(E),g^{\triangle(E)},d)]\}^{(7)}.
    \end{aligned}
  \end{equation}
\end{thm}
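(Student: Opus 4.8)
The plan is to use the fact that for $\dim M=7$ we have $k=2$, so by Theorem 4.1 the form $Q(\nabla^{TM},P_{i},P_{j},g,d,\tau)$ is a modular form over $SL(2,\mathbf{Z})$ of weight $2k+8=12$. This is exactly the weight encountered in the $15$-dimensional single-$E_{8}$ case (Theorem 3.5), so the argument follows the same template. Since the space of weight-$12$ modular forms over $SL(2,\mathbf{Z})$ is two-dimensional, spanned by $E_{4}(\tau)^{3}$ and $E_{6}(\tau)^{2}$, we may write
\begin{align}
Q(\nabla^{TM},P_{i},P_{j},g,d,\tau)=\lambda_{1}E_{4}(\tau)^{3}+\lambda_{2}E_{6}(\tau)^{2},\nonumber
\end{align}
where $\lambda_{1},\lambda_{2}$ are degree-$12$ characteristic forms determined by the lowest $q$-coefficients.

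First I would compute the $q$-expansion of $Q(\nabla^{TM},P_{i},P_{j},g,d,\tau)$ through order $q^{2}$ by multiplying the four constituent $q$-series, as in (3.34)--(3.37) but with the $E_{8}\times E_{8}$ modifications. The factor $e^{\frac{1}{24}E_{2}(\tau)\cdot\frac{1}{30}(c_{2}(W_{i})+c_{2}(W_{j}))}$ expands, using $E_{2}=1-24q-72q^{2}+\cdots$, with constant term $e^{\frac{1}{720}(c_{2}(W_{i})+c_{2}(W_{j}))}$ and the indicated $q$- and $q^{2}$-terms; the factor $\widehat{A}(TM,\nabla^{TM}){\rm ch}([\triangle(M)\otimes\Theta_{1}+2^{3}\Theta_{2}+2^{3}\Theta_{3}])$ and the factor ${\rm ch}(Q(E),g^{Q(E)},d,\tau)$ retain their expansions (3.35)--(3.36) with $2^{2k-1}=8$.

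The genuinely new ingredient is the bundle factor. Writing $\varphi(\tau)^{(8)}{\rm ch}(\mathcal{V}_{i})=1+q\,{\rm ch}(-8+W_{i})+q^{2}\,{\rm ch}(20-8W_{i}+\overline{W_{i}})+\cdots$, multiplying the two copies gives
\begin{align}
\varphi(\tau)^{(16)}{\rm ch}(\mathcal{V}_{i}){\rm ch}(\mathcal{V}_{j})=1+q\,{\rm ch}(W_{i}+W_{j}-16)+q^{2}\,{\rm ch}(-16W_{i}-16W_{j}+W_{i}W_{j}+\overline{W_{i}}+\overline{W_{j}}+104)+\cdots,\nonumber
\end{align}
where the term $W_{i}W_{j}$ and the constant $104=20+20+64$ come from the cross product ${\rm ch}(-8+W_{i}){\rm ch}(-8+W_{j})={\rm ch}(64-8W_{i}-8W_{j}+W_{i}W_{j})$. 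Collecting the constant, $q$- and $q^{2}$-coefficients of the full product identifies $Q_{0},Q_{1},Q_{2}$ with the three bracketed characteristic-form expressions in the statement.

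Finally, from $E_{4}^{3}=1+720q+179280q^{2}+\cdots$ and $E_{6}^{2}=1-1008q+220752q^{2}+\cdots$, the constant and $q$-coefficients give $\lambda_{1}=(1008\,Q_{0}+Q_{1})/1728$ and $\lambda_{2}=(720\,Q_{0}-Q_{1})/1728$; substituting into $Q_{2}=179280\,\lambda_{1}+220752\,\lambda_{2}$ yields $Q_{2}=196560\,Q_{0}-24\,Q_{1}$, which is exactly the asserted identity. The main obstacle is the accurate bookkeeping of the $q^{2}$-coefficient: one must correctly assemble all cross terms obtained by pairing each factor's $q$-term against the others---in particular the ${\rm ch}(\mathcal{V}_{i}){\rm ch}(\mathcal{V}_{j})$ cross term above, together with the products of the exponential's $q$-term against the $\Theta$- and $Q(E)$-series---since these produce the elaborate tensor-bundle combinations appearing in the final formula.
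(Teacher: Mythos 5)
Your proposal is correct and follows essentially the same route as the paper: expand $Q(\nabla^{TM},P_{i},P_{j},g,d,\tau)$ through $q^{2}$ (with the cross term $\varphi(\tau)^{(16)}{\rm ch}(\mathcal{V}_{i}){\rm ch}(\mathcal{V}_{j})$ computed exactly as the paper's (4.10)--(4.11)), identify it as a weight-$12$ form $\lambda_{1}E_{4}^{3}+\lambda_{2}E_{6}^{2}$, solve for $\lambda_{1},\lambda_{2}$ from the constant and $q$-coefficients, and compare $q^{2}$-coefficients to obtain $Q_{2}=196560\,Q_{0}-24\,Q_{1}$. The arithmetic checks out, so no gap.
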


\begin{proof}
An easy computation shows that
\begin{equation}
\begin{aligned}
e^{\frac{1}{24}E_{2}(\tau)\cdot\frac{1}{30}(c_{2}(W_{i})+c_{2}(W_{j}))}=&e^{\frac{1}{720}(c_{2}(W_{i})+c_{2}(W_{j}))}\\
&-qe^{\frac{1}{720}(c_{2}(W_{i})+c_{2}(W_{j}))}\cdot\tfrac{1}{30}(c_{2}(W_{i})+c_{2}(W_{j})) 
\end{aligned}
\end{equation}
\begin{equation}
\begin{aligned}
&+q^2e^{\frac{1}{720}(c_{2}(W_{i})+c_{2}(W_{j}))}\cdot\tfrac{1}{2}(-6+\tfrac{1}{30}(c_{2}(W_{i})+c_{2}(W_{j})))\\
&\cdot\tfrac{1}{30}(c_{2}(W_{i})+c_{2}(W_{j}))+\cdot\cdot\cdot,\nonumber
\end{aligned}
\end{equation}
\begin{equation}
\begin{aligned}
\varphi(\tau)^{(16)}ch(\mathcal{V}_{i})ch(\mathcal{V}_{j})&=1+q{\rm ch}(W_i+W_j-16)\\
&+q^2{\rm ch}(-16W_i-16W_j+W_iW_j+\overline{W_{i}}+\overline{W_{j}}+104)+\cdot\cdot\cdot.
\end{aligned}
\end{equation}
Therefore
\begin{equation}
\begin{aligned}
&Q(\nabla^{TM},P_{i},P_{j},g,d,\tau)\\
=&\{ e^{\frac{1}{720}(c_{2}(W_{i})+c_{2}(W_{j}))}[{\widehat{A}(TM,\nabla^{TM})}{\rm ch}(\triangle(M)){\rm ch}(\triangle(E),g^{\triangle(E)},d) +2^{2k-1}\\
&\cdot{\widehat{A}(TM,\nabla^{TM})}{\rm ch}(\triangle(E),g^{\triangle(E)},d)]\}^{(4k-1)}+q \{ e^{\frac{1}{720}(c_{2}(W_{i})+c_{2}(W_{j}))}{\widehat{A}(TM,\nabla^{TM})}\\
&\cdot[{\rm ch}(\triangle(M)\otimes2\widetilde{T_{\mathbf{C}}M}){\rm ch}(\triangle(E),g^{\triangle(E)},d)+{\rm ch}(\triangle(M)){\rm ch}(\Delta(E)\otimes(2\wedge^2\widetilde{E_\mathbf{C}}\\
&-\widetilde{E_\mathbf{C}}
    \otimes\widetilde{E_\mathbf{C}}+\widetilde{E_\mathbf{C}}),g,d)]+2^{2k-1}e^{\frac{1}{720}(c_{2}(W_{i})+c_{2}(W_{j}))}\widehat{A}(TM,\nabla^{TM})[{\rm ch}(\widetilde{T_{\mathbf{C}}M}\\
    &+\wedge^2\widetilde{T_{\mathbf{C}}M}){\rm ch}(\triangle(E),g^{\triangle(E)},d)+{\rm ch}(\Delta(E)\otimes(2\wedge^2\widetilde{E_\mathbf{C}}-\widetilde{E_\mathbf{C}}\otimes\widetilde{E_\mathbf{C}}+\widetilde{E_\mathbf{C}}),g,d)]\\
    &+e^{\frac{1}{720}(c_{2}(W_{i})+c_{2}(W_{j}))}[{\rm ch}(W_i+W_j-16)-\tfrac{1}{30}(c_{2}(W_{i})+c_{2}(W_{j}))][{\widehat{A}(TM,\nabla^{TM})}\\
    &\cdot{\rm ch}(\triangle(M)){\rm ch}(\triangle(E),g^{\triangle(E)},d)+2^{2k-1}{\widehat{A}(TM,\nabla^{TM})}\cdot{\rm ch}(\triangle(E),g^{\triangle(E)},d)]\}^{(4k-1)}\\
    &+q^2\{e^{\frac{1}{720}(c_{2}(W_{i})+c_{2}(W_{j}))}\widehat{A}(TM,\nabla^{TM})[{\rm ch}(\triangle(M)\otimes(2\widetilde{T_\mathbf{C}M}+\wedge^2\widetilde{T_\mathbf{C}M}
+\widetilde{T_\mathbf{C}M}\\
&\otimes\widetilde{T_\mathbf{C}M}+S^2\widetilde{T_\mathbf{C}M})){\rm ch}(\triangle(E),g^{\triangle(E)},d)+{\rm ch}(\triangle(M)\otimes2\widetilde{T_{\mathbf{C}}M}){\rm ch}(\triangle(E)\\
&\otimes(2\wedge^2\widetilde{E_\mathbf{C}}-\widetilde{E_\mathbf{C}}\otimes\widetilde{E_\mathbf{C}}+\widetilde{E_\mathbf{C}}),g,d)+{\rm ch}(\triangle(M)){\rm ch}(\triangle(E)\otimes(\wedge^2\widetilde{E_\mathbf{C}}\otimes\wedge^2\widetilde{E_\mathbf{C}}\\
&+2\wedge^4\widetilde{E_\mathbf{C}}-2\widetilde{E_\mathbf{C}}\otimes\wedge^3\widetilde{E_\mathbf{C}}
+2\widetilde{E_\mathbf{C}}\otimes\wedge^2\widetilde{E_\mathbf{C}}-\widetilde{E_\mathbf{C}}\otimes\widetilde{E_\mathbf{C}}\otimes\widetilde{E_\mathbf{C}}
+\widetilde{E_\mathbf{C}}+\wedge^2\widetilde{E_\mathbf{C}})\\
&,g,d)]+2^{2k-1}e^{\frac{1}{720}(c_{2}(W_{i})+c_{2}(W_{j}))}\widehat{A}(TM,\nabla^{TM})[{\rm ch}(\wedge^4\widetilde{T_\mathbf{C}M}+\wedge^2\widetilde{T_\mathbf{C}M}\otimes\widetilde{T_\mathbf{C}M}\\
&+\widetilde{T_\mathbf{C}M}\otimes\widetilde{T_\mathbf{C}M}+S^2\widetilde{T_\mathbf{C}M}+\widetilde{T_\mathbf{C}M}){\rm ch}(\triangle(E),g^{\triangle(E)},d)
+{\rm ch}(\widetilde{T_{\mathbf{C}}M}+\wedge^2\widetilde{T_{\mathbf{C}}M})\\
&\cdot{\rm ch}(\triangle(E)\otimes(2\wedge^2\widetilde{E_\mathbf{C}}-\widetilde
{E_\mathbf{C}}\otimes\widetilde{E_\mathbf{C}}+\widetilde{E_\mathbf{C}}),g,d)
+{\rm ch}(\triangle(E)\otimes(\wedge^2\widetilde{E_\mathbf{C}}\otimes\wedge^2
\widetilde{E_\mathbf{C}}\\
&+2\wedge^4\widetilde{E_\mathbf{C}}-2\widetilde{E_\mathbf{C}}\otimes\wedge^3
\widetilde{E_\mathbf{C}}
+2\widetilde{E_\mathbf{C}}\otimes\wedge^2\widetilde{E_\mathbf{C}}
-\widetilde{E_\mathbf{C}}\otimes\widetilde{E_\mathbf{C}}\otimes\widetilde{E_\mathbf{C}}
+\widetilde{E_\mathbf{C}}+\wedge^2\widetilde{E_\mathbf{C}})\\
&,g,d)]+e^{\frac{1}{720}(c_{2}(W_{i})+c_{2}(W_{j}))}[{\rm ch}(W_i+W_j-16)-\tfrac{1}{30}(c_{2}(W_{i})+c_{2}(W_{j}))]\\
&\cdot[\widehat{A}(TM,\nabla^{TM}){\rm ch}(\triangle(M)\otimes 2\widetilde{T_{\mathbf{C}}M}){\rm ch}(\triangle(E),g^{\triangle(E)},d)+\widehat{A}(TM,\nabla^{TM})\\
&\cdot{\rm ch}(\triangle(M)){\rm ch}(\Delta(E)\otimes(2\wedge^2\widetilde{E_\mathbf{C}}-\widetilde{E_\mathbf{C}}\otimes\widetilde{E_\mathbf{C}}+\widetilde{E_\mathbf{C}}),g,d)+2^{2k-1}\\
&\cdot\widehat{A}(TM,\nabla^{TM}){\rm ch}(\widetilde{T_{\mathbf{C}}M}+\wedge^2\widetilde{T_{\mathbf{C}}M}){\rm ch}(\triangle(E),g^{\triangle(E)},d)+2^{2k-1}\widehat{A}(TM,\nabla^{TM})\end{aligned}
\end{equation}
\begin{equation}
\begin{aligned}
&\cdot{\rm ch}(\Delta(E)\otimes(2\wedge^2\widetilde{E_\mathbf{C}}-\widetilde{E_\mathbf{C}}
    \otimes\widetilde{E_\mathbf{C}}+\widetilde{E_\mathbf{C}}),g,d)]+e^{\frac{1}{720}(c_{2}(W_{i})+c_{2}(W_{j}))}\\
    &\cdot[{\rm ch}(-16W_i-16W_j+W_iW_j+\overline{W_{i}}+\overline{W_{j}}+104)-\tfrac{1}{30}(c_{2}(W_{i})+c_{2}(W_{j}))\\
    &\cdot{\rm ch}(W_i+W_j-16)+\tfrac{1}{2}(-6+\tfrac{1}{30}(c_{2}(W_{i})+c_{2}(W_{j})))\cdot\tfrac{1}{30}(c_{2}(W_{i})+c_{2}(W_{j}))]\\
    &\cdot[{\widehat{A}(TM,\nabla^{TM})}{\rm ch}(\triangle(M)){\rm ch}(\triangle(E),g^{\triangle(E)},d) +2^{2k-1}{\widehat{A}(TM,\nabla^{TM})}\\
    &\cdot{\rm ch}(\triangle(E),g^{\triangle(E)},d)]\}^{(4k-1)}+\cdot\cdot\cdot.\nonumber
\end{aligned}
\end{equation}

When ${\rm dim}M=7$, then $Q(\nabla^{TM},P_{i},g,d,\tau)$ is a modular form over $SL(2,{\bf Z})$ with the weight $12$, it must be a multiple of
\begin{align}
Q(\nabla^{TM},P_{i},P_{j},g,d,\tau)&=\lambda_{1}E_{4}(\tau)^{3}+\lambda_{2}E_{6}(\tau)^{2}\\
&=1+q(720\lambda_{1}-1008\lambda_{2})+q^2(179280\lambda_{1}+220752\lambda_{2})+\cdot\cdot\cdot.\nonumber
\end{align}
where $\lambda_1$, $\lambda_2$ is degree $12$ forms.

From this, we obtain
\begin{equation}
\begin{aligned}
\lambda_1
=&\tfrac{1008}{1728}\{ e^{\frac{1}{720}(c_{2}(W_{i})+c_{2}(W_{j}))}[{\widehat{A}(TM,\nabla^{TM})}{\rm ch}(\triangle(M)){\rm ch}(\triangle(E),g^{\triangle(E)},d) \\
&+8{\widehat{A}(TM,\nabla^{TM})}{\rm ch}(\triangle(E),g^{\triangle(E)},d)]\}^{(7)}+\tfrac{1}{1728}\{ e^{\frac{1}{720}(c_{2}(W_{i})+c_{2}(W_{j}))}\\
&\cdot{\widehat{A}(TM,\nabla^{TM})}[{\rm ch}(\triangle(M)\otimes2\widetilde{T_{\mathbf{C}}M}){\rm ch}(\triangle(E),g^{\triangle(E)},d)+{\rm ch}(\triangle(M))\\
&\cdot{\rm ch}(\Delta(E)\otimes(2\wedge^2\widetilde{E_\mathbf{C}}-\widetilde{E_\mathbf{C}}
    \otimes\widetilde{E_\mathbf{C}}+\widetilde{E_\mathbf{C}}),g,d)]+8e^{\frac{1}{720}(c_{2}(W_{i})+c_{2}(W_{j}))}\\
    &\cdot\widehat{A}(TM,\nabla^{TM})[{\rm ch}(\widetilde{T_{\mathbf{C}}M}+\wedge^2\widetilde{T_{\mathbf{C}}M}){\rm ch}(\triangle(E),g^{\triangle(E)},d)+{\rm ch}(\Delta(E)\\
    &\otimes(2\wedge^2\widetilde{E_\mathbf{C}}-\widetilde{E_\mathbf{C}}\otimes\widetilde{E_\mathbf{C}}+\widetilde{E_\mathbf{C}}),g,d)]+e^{\frac{1}{720}(c_{2}(W_{i})+c_{2}(W_{j}))}[{\rm ch}(W_i+W_j\\
    &-16)-\tfrac{1}{30}(c_{2}(W_{i})+c_{2}(W_{j}))][{\widehat{A}(TM,\nabla^{TM})}{\rm ch}(\triangle(M)){\rm ch}(\triangle(E),g^{\triangle(E)},d)\\
    &+8{\widehat{A}(TM,\nabla^{TM})}\cdot{\rm ch}(\triangle(E),g^{\triangle(E)},d)]\}^{(7)},
    \end{aligned}
  \end{equation}
  and
  \begin{equation}
\begin{aligned}
\lambda_2
=&\tfrac{720}{1728}\{ e^{\frac{1}{720}(c_{2}(W_{i})+c_{2}(W_{j}))}[{\widehat{A}(TM,\nabla^{TM})}{\rm ch}(\triangle(M)){\rm ch}(\triangle(E),g^{\triangle(E)},d) \\
&+8{\widehat{A}(TM,\nabla^{TM})}{\rm ch}(\triangle(E),g^{\triangle(E)},d)]\}^{(7)}-\tfrac{1}{1728}\{ e^{\frac{1}{720}(c_{2}(W_{i})+c_{2}(W_{j}))}\\
&\cdot{\widehat{A}(TM,\nabla^{TM})}[{\rm ch}(\triangle(M)\otimes2\widetilde{T_{\mathbf{C}}M}){\rm ch}(\triangle(E),g^{\triangle(E)},d)+{\rm ch}(\triangle(M))\\
&\cdot{\rm ch}(\Delta(E)\otimes(2\wedge^2\widetilde{E_\mathbf{C}}-\widetilde{E_\mathbf{C}}
    \otimes\widetilde{E_\mathbf{C}}+\widetilde{E_\mathbf{C}}),g,d)]+8e^{\frac{1}{720}(c_{2}(W_{i})+c_{2}(W_{j}))}\\
    &\cdot\widehat{A}(TM,\nabla^{TM})[{\rm ch}(\widetilde{T_{\mathbf{C}}M}+\wedge^2\widetilde{T_{\mathbf{C}}M}){\rm ch}(\triangle(E),g^{\triangle(E)},d)+{\rm ch}(\Delta(E)\\
    &\otimes(2\wedge^2\widetilde{E_\mathbf{C}}-\widetilde{E_\mathbf{C}}\otimes\widetilde{E_\mathbf{C}}+\widetilde{E_\mathbf{C}}),g,d)]+e^{\frac{1}{720}(c_{2}(W_{i})+c_{2}(W_{j}))}[{\rm ch}(W_i+W_j\\
    &-16)-\tfrac{1}{30}(c_{2}(W_{i})+c_{2}(W_{j}))][{\widehat{A}(TM,\nabla^{TM})}{\rm ch}(\triangle(M)){\rm ch}(\triangle(E),g^{\triangle(E)},d)\\
    &+8{\widehat{A}(TM,\nabla^{TM})}\cdot{\rm ch}(\triangle(E),g^{\triangle(E)},d)]\}^{(7)}.
    \end{aligned}
  \end{equation}

Theorem 4.2 follows from comparing the coefficients of the $q^2$ terms.
\end{proof}

\begin{thm}
For $11$-dimensional spin manifold, when $c_3(E,g,d)=0,$ we conclude that
  \begin{equation}
   \begin{aligned}
    &\{ e^{\frac{1}{720}(c_{2}(W_{i})+c_{2}(W_{j}))}{\widehat{A}(TM,\nabla^{TM})}[{\rm ch}(\triangle(M)\otimes2\widetilde{T_{\mathbf{C}}M}){\rm ch}(\triangle(E),g^{\triangle(E)},d)\\
    &+{\rm ch}(\triangle(M)){\rm ch}(\Delta(E)\otimes(2\wedge^2\widetilde{E_\mathbf{C}}-\widetilde{E_\mathbf{C}}
    \otimes\widetilde{E_\mathbf{C}}+\widetilde{E_\mathbf{C}}),g,d)]+32e^{\frac{1}{720}c_{2}(W_{i})}\\
    &\cdot e^{\frac{1}{720}c_{2}(W_{j})}\widehat{A}(TM,\nabla^{TM})[{\rm ch}(\widetilde{T_{\mathbf{C}}M}+\wedge^2\widetilde{T_{\mathbf{C}}M}){\rm ch}(\triangle(E),g^{\triangle(E)},d)+{\rm ch}(\Delta(E)\\
    &\otimes(2\wedge^2\widetilde{E_\mathbf{C}}-\widetilde{E_\mathbf{C}}\otimes\widetilde{E_\mathbf{C}}+\widetilde{E_\mathbf{C}}),g,d)]+e^{\frac{1}{720}(c_{2}(W_{i})+c_{2}(W_{j}))}[{\rm ch}(W_i+W_j-16)\\
    &-\tfrac{1}{30}(c_{2}(W_{i})+c_{2}(W_{j}))][{\widehat{A}(TM,\nabla^{TM})}{\rm ch}(\triangle(M)){\rm ch}(\triangle(E),g^{\triangle(E)},d)+32\\
    &\cdot{\widehat{A}(TM,\nabla^{TM})}{\rm ch}(\triangle(E),g^{\triangle(E)},d)]\}^{(11)}=-24 \{ e^{\frac{1}{720}(c_{2}(W_{i})+c_{2}(W_{j}))}[{\widehat{A}(TM,\nabla^{TM})}\\
    &\cdot{\rm ch}(\triangle(M)){\rm ch}(\triangle(E),g^{\triangle(E)},d) +32{\widehat{A}(TM,\nabla^{TM})}{\rm ch}(\triangle(E),g^{\triangle(E)},d)]\}^{(11)},
   \end{aligned}
  \end{equation}
  and
  \begin{equation}
   \begin{aligned}
    &\{e^{\frac{1}{720}(c_{2}(W_{i})+c_{2}(W_{j}))}\widehat{A}(TM,\nabla^{TM})[{\rm ch}(\triangle(M)\otimes(2\widetilde{T_\mathbf{C}M}+\wedge^2\widetilde{T_\mathbf{C}M}
+\widetilde{T_\mathbf{C}M}\\
&\otimes\widetilde{T_\mathbf{C}M}+S^2\widetilde{T_\mathbf{C}M})){\rm ch}(\triangle(E),g^{\triangle(E)},d)+{\rm ch}(\triangle(M)\otimes2\widetilde{T_{\mathbf{C}}M}){\rm ch}(\triangle(E)\\
&\otimes(2\wedge^2\widetilde{E_\mathbf{C}}-\widetilde{E_\mathbf{C}}\otimes\widetilde{E_\mathbf{C}}+\widetilde{E_\mathbf{C}}),g,d)+{\rm ch}(\triangle(M)){\rm ch}(\triangle(E)\otimes(\wedge^2\widetilde{E_\mathbf{C}}\\
&\otimes\wedge^2\widetilde{E_\mathbf{C}}
+2\wedge^4\widetilde{E_\mathbf{C}}-2\widetilde{E_\mathbf{C}}\otimes\wedge^3\widetilde{E_\mathbf{C}}
+2\widetilde{E_\mathbf{C}}\otimes\wedge^2\widetilde{E_\mathbf{C}}-\widetilde{E_\mathbf{C}}\otimes\widetilde{E_\mathbf{C}}\otimes\widetilde{E_\mathbf{C}}\\
&+\widetilde{E_\mathbf{C}}+\wedge^2\widetilde{E_\mathbf{C}}),g,d)]+32e^{\frac{1}{720}(c_{2}(W_{i})+c_{2}(W_{j}))}\widehat{A}(TM,\nabla^{TM})[{\rm ch}(\wedge^4\widetilde{T_\mathbf{C}M}\\
&+\wedge^2\widetilde{T_\mathbf{C}M}\otimes\widetilde{T_\mathbf{C}M}+\widetilde{T_\mathbf{C}M}\otimes\widetilde{T_\mathbf{C}M}+S^2\widetilde{T_\mathbf{C}M}+\widetilde{T_\mathbf{C}M}){\rm ch}(\triangle(E),g^{\triangle(E)},d)\\
&+{\rm ch}(\widetilde{T_{\mathbf{C}}M}+\wedge^2\widetilde{T_{\mathbf{C}}M}){\rm ch}(\triangle(E)\otimes(2\wedge^2\widetilde{E_\mathbf{C}}-\widetilde
{E_\mathbf{C}}\otimes\widetilde{E_\mathbf{C}}+\widetilde{E_\mathbf{C}}),g,d)\\
&+{\rm ch}(\triangle(E)\otimes(\wedge^2\widetilde{E_\mathbf{C}}\otimes\wedge^2
\widetilde{E_\mathbf{C}}+2\wedge^4\widetilde{E_\mathbf{C}}-2\widetilde{E_\mathbf{C}}\otimes\wedge^3
\widetilde{E_\mathbf{C}}
+2\widetilde{E_\mathbf{C}}\otimes\wedge^2\widetilde{E_\mathbf{C}}\\
&-\widetilde{E_\mathbf{C}}\otimes\widetilde{E_\mathbf{C}}\otimes\widetilde{E_\mathbf{C}}
+\widetilde{E_\mathbf{C}}+\wedge^2\widetilde{E_\mathbf{C}}),g,d)]+e^{\frac{1}{720}(c_{2}(W_{i})+c_{2}(W_{j}))}[{\rm ch}(W_i+W_j\\
&-16)-\tfrac{1}{30}(c_{2}(W_{i})+c_{2}(W_{j}))][\widehat{A}(TM,\nabla^{TM}){\rm ch}(\triangle(M)\otimes 2\widetilde{T_{\mathbf{C}}M})\\
&\cdot{\rm ch}(\triangle(E),g^{\triangle(E)},d)+\widehat{A}(TM,\nabla^{TM}){\rm ch}(\triangle(M)){\rm ch}(\Delta(E)\otimes(2\wedge^2\widetilde{E_\mathbf{C}}\\
&-\widetilde{E_\mathbf{C}}\otimes\widetilde{E_\mathbf{C}}+\widetilde{E_\mathbf{C}}),g,d)+32\widehat{A}(TM,\nabla^{TM}){\rm ch}(\widetilde{T_{\mathbf{C}}M}+\wedge^2\widetilde{T_{\mathbf{C}}M})\\
&\cdot{\rm ch}(\triangle(E),g^{\triangle(E)},d)+32\widehat{A}(TM,\nabla^{TM}){\rm ch}(\Delta(E)\otimes(2\wedge^2\widetilde{E_\mathbf{C}}-\widetilde{E_\mathbf{C}}\\
   & \otimes\widetilde{E_\mathbf{C}}+\widetilde{E_\mathbf{C}}),g,d)]+e^{\frac{1}{720}(c_{2}(W_{i})+c_{2}(W_{j}))}\cdot[{\rm ch}(-16W_i-16W_j+W_iW_j\\
   &+\overline{W_{i}}+\overline{W_{j}}+104)-\tfrac{1}{30}(c_{2}(W_{i})+c_{2}(W_{j})){\rm ch}(W_i+W_j-16)+\tfrac{1}{2}(-6\\
   &+\tfrac{1}{30}(c_{2}(W_{i})+c_{2}(W_{j})))\cdot\tfrac{1}{30}(c_{2}(W_{i})+c_{2}(W_{j}))][{\widehat{A}(TM,\nabla^{TM})}{\rm ch}(\triangle(M))\\
   &\cdot{\rm ch}(\triangle(E),g^{\triangle(E)},d) +32{\widehat{A}(TM,\nabla^{TM})}{\rm ch}(\triangle(E),g^{\triangle(E)},d)]\}^{(11)}\\
   &=-196632 \{ e^{\frac{1}{720}(c_{2}(W_{i})+c_{2}(W_{j}))}[{\widehat{A}(TM,\nabla^{TM})}{\rm ch}(\triangle(M)){\rm ch}(\triangle(E),g^{\triangle(E)},d) \\ &+32{\widehat{A}(TM,\nabla^{TM})}{\rm ch}(\triangle(E),g^{\triangle(E)},d)]\}^{(11)}.
   \end{aligned}
  \end{equation}
\end{thm}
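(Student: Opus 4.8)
The plan is to reuse verbatim the mechanism of Theorem 4.2, now specialized to $k=3$. By Theorem 4.1, when $\dim M = 11$ (so that $k=3$) the form $Q(\nabla^{TM},P_i,P_j,g,d,\tau)$ is a modular form over $SL(2,\mathbf{Z})$ of weight $2k+8 = 14$. The structural input I would invoke is that the graded ring of modular forms over $SL(2,\mathbf{Z})$ equals $\mathbf{C}[E_4,E_6]$, and that its weight-$14$ component is one-dimensional: since $14 \equiv 2 \pmod{12}$ there are no cusp forms of this weight, and the space is spanned by $E_4^2 E_6$. Consequently there is a single degree-$11$ characteristic form $\lambda$ with
\[
Q(\nabla^{TM},P_i,P_j,g,d,\tau) = \lambda\, E_4(\tau)^2 E_6(\tau).
\]

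Next I would expand both sides in $q$ and match coefficients. From $E_4 = 1 + 240q + 2160q^2 + \cdots$ and $E_6 = 1 - 504q - 16632q^2 + \cdots$ one computes
\[
E_4(\tau)^2 E_6(\tau) = 1 - 24q - 196632\,q^2 + \cdots,
\]
the two nontrivial coefficients being precisely the constants $-24$ and $-196632$ that appear on the right-hand sides of the two asserted identities. For the left-hand side I would take the explicit $q$-expansion of $Q(\nabla^{TM},P_i,P_j,g,d,\tau)$ already displayed in the proof of Theorem 4.2, which is valid for every $k$; setting $k=3$ simply replaces each occurrence of $2^{2k-1}$ by $32$. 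Its $q^0$-term coincides with the degree-$11$ characteristic form multiplying the numerical constants on the right-hand sides of both identities (the combination $\widehat{A}(TM,\nabla^{TM}){\rm ch}(\triangle(M)){\rm ch}(\triangle(E),g^{\triangle(E)},d)+32\,\widehat{A}(TM,\nabla^{TM}){\rm ch}(\triangle(E),g^{\triangle(E)},d)$, weighted by $e^{\frac{1}{720}(c_2(W_i)+c_2(W_j))}$), so $\lambda$ is read off immediately.

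I would then conclude by comparing coefficients degree by degree: equating the $q^1$-terms of the two sides produces the first displayed identity, with its factor $-24\lambda$, and equating the $q^2$-terms produces the second, with its factor $-196632\lambda$. Because $M_{14}$ is one-dimensional, $\lambda$ is pinned down by the $q^0$ comparison alone; unlike the weight-$12$ case of Theorem 4.2, where $M_{12}$ is two-dimensional and one must solve a linear system for $\lambda_1,\lambda_2$, no elimination is needed here. The only laborious step, and the one I expect to be the main obstacle, is assembling the $q^2$-coefficient of $Q$: this requires the degree-two contributions of $e^{\frac{1}{24}E_2(\tau)\cdot\frac{1}{30}(c_2(W_i)+c_2(W_j))}$, of the character of $\triangle(M)\otimes\Theta_1(T_{\mathbf{C}}M)+2^{2k-1}\Theta_2(T_{\mathbf{C}}M)+2^{2k-1}\Theta_3(T_{\mathbf{C}}M)$, of ${\rm ch}(Q(E),g^{Q(E)},d,\tau)$, and of $\varphi(\tau)^{(16)}{\rm ch}(\mathcal{V}_i){\rm ch}(\mathcal{V}_j)$, each expanded, multiplied out, and truncated to top degree $11$. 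This is purely mechanical but bulky bookkeeping rather than a conceptual difficulty.
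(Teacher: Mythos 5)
Your proposal matches the paper's own argument: the paper likewise notes that $Q(\nabla^{TM},P_i,P_j,g,d,\tau)$ has weight $14$, hence must be a multiple of $E_4(\tau)^2E_6(\tau)=1-24q-196632q^2+\cdots$, and then reads off both identities by comparing the $q^0$, $q^1$ and $q^2$ coefficients against the general-$k$ expansion already displayed in the proof of Theorem 4.2 (with $2^{2k-1}=32$). Your added justification that $M_{14}$ is one-dimensional, so no linear system is needed, is a correct elaboration of the same route.
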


\begin{proof}
Similarly, we have
\begin{align}
E_{4}(\tau)^{2}E_{6}(\tau)=1-24q-196632q^{2}+\cdot\cdot\cdot.
\end{align}
Since $Q(\nabla^{TM},P_{i},P_{j},g,d,\tau)$ is a modular form over $SL(2,{\bf Z})$ with the weight $14$,  it must be a multiple of $E_{4}(\tau)^{2}E_{6}(\tau).$
This leads us to the following conclusion.
\end{proof}

\begin{thm}
For $15$-dimensional spin manifold, when $c_3(E,g,d)=0,$ we conclude that
   \begin{equation}
    \begin{aligned}
&\{e^{\frac{1}{720}(c_{2}(W_{i})+c_{2}(W_{j}))}\widehat{A}(TM,\nabla^{TM})[{\rm ch}(\triangle(M)\otimes(2\widetilde{T_\mathbf{C}M}+\wedge^2\widetilde{T_\mathbf{C}M}
+\widetilde{T_\mathbf{C}M}\\
&\otimes\widetilde{T_\mathbf{C}M}+S^2\widetilde{T_\mathbf{C}M})){\rm ch}(\triangle(E),g^{\triangle(E)},d)+{\rm ch}(\triangle(M)\otimes2\widetilde{T_{\mathbf{C}}M}){\rm ch}(\triangle(E)\\
&\otimes(2\wedge^2\widetilde{E_\mathbf{C}}-\widetilde{E_\mathbf{C}}\otimes\widetilde{E_\mathbf{C}}+\widetilde{E_\mathbf{C}}),g,d)+{\rm ch}(\triangle(M)){\rm ch}(\triangle(E)\otimes(\wedge^2\widetilde{E_\mathbf{C}}\\
&\otimes\wedge^2\widetilde{E_\mathbf{C}}+2\wedge^4\widetilde{E_\mathbf{C}}-2\widetilde{E_\mathbf{C}}\otimes\wedge^3\widetilde{E_\mathbf{C}}
+2\widetilde{E_\mathbf{C}}\otimes\wedge^2\widetilde{E_\mathbf{C}}-\widetilde{E_\mathbf{C}}\otimes\widetilde{E_\mathbf{C}}\otimes\widetilde{E_\mathbf{C}}\\
&+\widetilde{E_\mathbf{C}}+\wedge^2\widetilde{E_\mathbf{C}}),g,d)]+128e^{\frac{1}{720}(c_{2}(W_{i})+c_{2}(W_{j}))}\widehat{A}(TM,\nabla^{TM})[{\rm ch}(\wedge^4\widetilde{T_\mathbf{C}M}\\
&+\wedge^2\widetilde{T_\mathbf{C}M}\otimes\widetilde{T_\mathbf{C}M}+\widetilde{T_\mathbf{C}M}\otimes\widetilde{T_\mathbf{C}M}+S^2\widetilde{T_\mathbf{C}M}+\widetilde{T_\mathbf{C}M}){\rm ch}(\triangle(E),g^{\triangle(E)},d)\\
&+{\rm ch}(\widetilde{T_{\mathbf{C}}M}+\wedge^2\widetilde{T_{\mathbf{C}}M}){\rm ch}(\triangle(E)\otimes(2\wedge^2\widetilde{E_\mathbf{C}}-\widetilde
{E_\mathbf{C}}\otimes\widetilde{E_\mathbf{C}}+\widetilde{E_\mathbf{C}}),g,d)\\
&+{\rm ch}(\triangle(E)\otimes(\wedge^2\widetilde{E_\mathbf{C}}\otimes\wedge^2
\widetilde{E_\mathbf{C}}
+2\wedge^4\widetilde{E_\mathbf{C}}-2\widetilde{E_\mathbf{C}}\otimes\wedge^3
\widetilde{E_\mathbf{C}}
+2\widetilde{E_\mathbf{C}}\otimes\wedge^2\widetilde{E_\mathbf{C}}\\
&-\widetilde{E_\mathbf{C}}\otimes\widetilde{E_\mathbf{C}}\otimes\widetilde{E_\mathbf{C}}
+\widetilde{E_\mathbf{C}}+\wedge^2\widetilde{E_\mathbf{C}}),g,d)]+e^{\frac{1}{720}(c_{2}(W_{i})+c_{2}(W_{j}))}[{\rm ch}(W_i+W_j-16)\\
&-\tfrac{1}{30}(c_{2}(W_{i})+c_{2}(W_{j}))][\widehat{A}(TM,\nabla^{TM}){\rm ch}(\triangle(M)\otimes 2\widetilde{T_{\mathbf{C}}M}){\rm ch}(\triangle(E),g^{\triangle(E)},d)\\
&+\widehat{A}(TM,\nabla^{TM}){\rm ch}(\triangle(M)){\rm ch}(\Delta(E)\otimes(2\wedge^2\widetilde{E_\mathbf{C}}-\widetilde{E_\mathbf{C}}\otimes\widetilde{E_\mathbf{C}}+\widetilde{E_\mathbf{C}}),g,d)\\
&+128\widehat{A}(TM,\nabla^{TM}){\rm ch}(\widetilde{T_{\mathbf{C}}M}+\wedge^2\widetilde{T_{\mathbf{C}}M}){\rm ch}(\triangle(E),g^{\triangle(E)},d)+128\widehat{A}(TM,\nabla^{TM})\\
&\cdot{\rm ch}(\Delta(E)\otimes(2\wedge^2\widetilde{E_\mathbf{C}}-\widetilde{E_\mathbf{C}}
    \otimes\widetilde{E_\mathbf{C}}+\widetilde{E_\mathbf{C}}),g,d)]+e^{\frac{1}{720}(c_{2}(W_{i})+c_{2}(W_{j}))}[{\rm ch}(-16W_i\\
    &-16W_j+W_iW_j+\overline{W_{i}}+\overline{W_{j}}+104)-\tfrac{1}{30}(c_{2}(W_{i})+c_{2}(W_{j})){\rm ch}(W_i+W_j-16)\\
    &+\tfrac{1}{2}(-6+\tfrac{1}{30}(c_{2}(W_{i})+c_{2}(W_{j})))\cdot\tfrac{1}{30}(c_{2}(W_{i})+c_{2}(W_{j}))][{\widehat{A}(TM,\nabla^{TM})}{\rm ch}(\triangle(M))\\
    &\cdot{\rm ch}(\triangle(E),g^{\triangle(E)},d) +128{\widehat{A}(TM,\nabla^{TM})}{\rm ch}(\triangle(E),g^{\triangle(E)},d)]\}^{(15)}=146880 \\
    &\cdot \{e^{\frac{1}{720}(c_{2}(W_{i})+c_{2}(W_{j}))}[{\widehat{A}(TM,\nabla^{TM})}{\rm ch}(\triangle(M)){\rm ch}(\triangle(E),g^{\triangle(E)},d) +128\\
    &\cdot{\widehat{A}(TM,\nabla^{TM})}{\rm ch}(\triangle(E),g^{\triangle(E)},d)]\}^{(15)}+194\{ e^{\frac{1}{720}(c_{2}(W_{i})+c_{2}(W_{j}))}{\widehat{A}(TM,\nabla^{TM})}\\
    &\cdot[{\rm ch}(\triangle(M)\otimes 2\widetilde{T_{\mathbf{C}}M}){\rm ch}(\triangle(E),g^{\triangle(E)},d)+{\rm ch}(\triangle(M)){\rm ch}(\Delta(E)\otimes(2\wedge^2\widetilde{E_\mathbf{C}}\\
    &-\widetilde{E_\mathbf{C}}
    \otimes\widetilde{E_\mathbf{C}}+\widetilde{E_\mathbf{C}}),g,d)]+128e^{\frac{1}{720}(c_{2}(W_{i})+c_{2}(W_{j}))}\widehat{A}(TM,\nabla^{TM})[{\rm ch}(\widetilde{T_{\mathbf{C}}M}\\
    &+\wedge^2\widetilde{T_{\mathbf{C}}M}){\rm ch}(\triangle(E),g^{\triangle(E)},d)+{\rm ch}(\Delta(E)\otimes(2\wedge^2\widetilde{E_\mathbf{C}}-\widetilde{E_\mathbf{C}} \otimes\widetilde{E_\mathbf{C}}+\widetilde{E_\mathbf{C}}),g,d)]\\
    &+e^{\frac{1}{720}(c_{2}(W_{i})+c_{2}(W_{j}))}[{\rm ch}(W_i+W_j-16)-\tfrac{1}{30}(c_{2}(W_{i})+c_{2}(W_{j}))][{\widehat{A}(TM,\nabla^{TM})}\\
    &\cdot{\rm ch}(\triangle(M)){\rm ch}(\triangle(E),g^{\triangle(E)},d)+128{\widehat{A}(TM,\nabla^{TM})}{\rm ch}(\triangle(E),g^{\triangle(E)},d)]\}^{(15)}.
    \end{aligned}
  \end{equation}
\end{thm}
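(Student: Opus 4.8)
The plan is to combine the weight-$16$ modularity furnished by Theorem 4.1 with the two-dimensionality of the corresponding space of modular forms. For $\dim M=15$ one has $k=4$, so Theorem 4.1 (with $c_{3}(E,g,d)=0$) guarantees that $Q(\nabla^{TM},P_{i},P_{j},g,d,\tau)$ is a modular form over $SL(2,\mathbf{Z})$ of weight $2k+8=16$. Since every modular form over $SL(2,\mathbf{Z})$ is a polynomial in $E_{4}$ and $E_{6}$, and the weight-$16$ space is two-dimensional, spanned by $E_{4}^{4}$ and $E_{4}E_{6}^{2}$, I would write
\begin{equation}
Q(\nabla^{TM},P_{i},P_{j},g,d,\tau)=\lambda_{1}E_{4}(\tau)^{4}+\lambda_{2}E_{4}(\tau)E_{6}(\tau)^{2},\nonumber
\end{equation}
where $\lambda_{1},\lambda_{2}$ are characteristic forms of the appropriate degree, to be pinned down.

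The second ingredient is the $q$-expansion of $Q$ through order $q^{2}$, obtained exactly as in the proofs of Theorems 4.2 and 4.4. I would expand $e^{\frac{1}{24}E_{2}(\tau)\cdot\frac{1}{30}(c_{2}(W_{i})+c_{2}(W_{j}))}$, the factor $\widehat{A}(TM,\nabla^{TM})\mathrm{ch}([\triangle(M)\otimes\Theta_{1}(T_{C}M)+2^{2k-1}\Theta_{2}(T_{C}M)+2^{2k-1}\Theta_{3}(T_{C}M)])$, the term $\mathrm{ch}(Q(E),g^{Q(E)},d,\tau)$, and $\varphi(\tau)^{(16)}ch(\mathcal{V}_{i})ch(\mathcal{V}_{j})$ each to order $q^{2}$, multiply them, and extract the degree-$15$ component. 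This yields the constant term $A$, the coefficient $B$ of $q$, and the coefficient $C$ of $q^{2}$ as explicit characteristic forms; here $A$ and $B$ are precisely the two bracketed expressions on the right-hand side of the asserted identity, and $C$ is the left-hand side.

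On the modular side I would use $E_{4}^{4}=1+960q+354240q^{2}+\cdots$ and $E_{4}E_{6}^{2}=1-768q-19008q^{2}+\cdots$, both read off from the expansions of $E_{4}$ and $E_{6}$. Matching the $q^{0}$ and $q^{1}$ coefficients gives the linear system $\lambda_{1}+\lambda_{2}=A$ and $960\lambda_{1}-768\lambda_{2}=B$, with solution $\lambda_{1}=\tfrac{768A+B}{1728}$ and $\lambda_{2}=\tfrac{960A-B}{1728}$. Substituting these into the $q^{2}$ coefficient $C=354240\lambda_{1}-19008\lambda_{2}$ eliminates $\lambda_{1},\lambda_{2}$ and expresses $C$ as a single explicit integer combination of $A$ and $B$, in which $A$ enters with coefficient $146880$; writing $A$, $B$, $C$ back out as characteristic forms then produces the stated cancellation formula.

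The genuinely laborious part is the $q$-expansion bookkeeping, not the modular algebra. In particular, the expansion of $\varphi(\tau)^{(16)}ch(\mathcal{V}_{i})ch(\mathcal{V}_{j})$, whose $q$- and $q^{2}$-coefficients are $\mathrm{ch}(W_{i}+W_{j}-16)$ and $\mathrm{ch}(-16W_{i}-16W_{j}+W_{i}W_{j}+\overline{W_{i}}+\overline{W_{j}}+104)$, requires the weight-$16$ analogue of the $E_{8}$ representation identities and careful handling of the cross term $W_{i}W_{j}$; this then has to be merged with the $\mathrm{ch}(Q(E))$ and $\widehat{A}$-$\Theta$ expansions while keeping only the top degree-$15$ part and invoking $\sum_{l}(2\pi\sqrt{-1}y_{l}^{i})^{2}=-\tfrac{1}{30}c_{2}(W_{i})$. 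Once $A$, $B$, and $C$ are assembled, the conclusion follows immediately from the $2\times2$ linear system above.
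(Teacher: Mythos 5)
Your proposal follows exactly the paper's argument: Theorem 4.5 (the weight-$16$ modularity statement for $k=4$) is invoked, $Q(\nabla^{TM},P_{i},P_{j},g,d,\tau)$ is written as a combination $\lambda_{1}E_{4}^{4}+\lambda_{2}E_{4}E_{6}^{2}$ (the paper's ``$E_{4}^{4}E_{6}$'' is a typo, as its own $q$-expansion $1+960q+354240q^{2}$ shows), the $q^{0}$ and $q^{1}$ coefficients determine $\lambda_{1},\lambda_{2}$, and these are substituted into the $q^{2}$ coefficient, which is precisely what the paper does. One remark: carrying your elimination to the end gives $C=146880A+\tfrac{354240+19008}{1728}B=146880A+216B$, so the coefficient $194$ in the printed statement appears to be an arithmetic slip in the paper rather than a defect of your (correct) method.
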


\begin{proof}
It is easy to obtain $Q(\nabla^{TM},P_{i},P_{j},g,d,\tau)$ is a modular form over $SL(2,{\bf Z})$ with the weight $16.$
Consequently,
\begin{align}
&Q(\nabla^{TM},P_{i},P_{j},g,d,\tau)\nonumber\\
=&\lambda_1E_{4}(\tau)E_{6}(\tau)^{2}+\lambda_2E_{4}(\tau)^{4}E_{6}\\
=&\lambda_1+\lambda_2+q(-768\lambda_1+960\lambda_2)+q^{2}(-19008\lambda_1+354240\lambda_2)+\cdot\cdot\cdot.\nonumber
\end{align}

To find $\lambda_1$, $\lambda_2$ and Theorem 4.4, we compare the corresponding coefficients.
\end{proof}

\subsection{Modular forms and Witten genus in $(4k-1)$-dimensional spin$^c$ manifolds}\hfill\\

Set $M$ be a closed oriented ${\rm spin}^{c}$ manifold of dimension $4k-1$.
To study $E_{8}\times E_{8}$ bundles, we extend the definition of $\widetilde{Q}(\nabla^{TM},\nabla^{L},P_{i},g,d,\tau)$, which is originally associated with $E_{8}$ bundles.
\begin{equation}
\begin{aligned}
\widetilde{Q}(\nabla^{TM},\nabla^{L},P_{i},P_{j},g,d,\tau)=&\{e^{\frac{1}{24}E_{2}(\tau)\cdot\frac{1}{30}(c_{2}(W_{i})+c_{2}(W_{j}))} \widehat{A}(TM,\nabla^{TM}){\rm exp}(\frac{c}{2})\\
&\cdot{\rm ch}(\Theta(T_{\mathbf{C}}M,L_{\bf{R}}\otimes\mathbf{C})){\rm ch}(Q(E),g^{Q(E)},d,\tau)\\
&\cdot\varphi(\tau)^{(16)}ch(\mathcal{V}_{i})ch(\mathcal{V}_{j})\}^{(4k-1)}.
\end{aligned}
\end{equation}

\begin{thm}
Let ${\rm dim}M=4k-1,$ for $k\leq 4.$  If $3p_1(L)-p_1(M)=0$ and $c_3(E,g,d)=0,$ then $\widetilde{Q}(\nabla^{TM},\nabla^{L},P_{i},P_{j},g,d,\tau)$ is a modular form over $SL(2,{\bf Z})$ with the weight $2k+8.$
\end{thm}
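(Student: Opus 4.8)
The plan is to run the same modular-invariance argument used for Theorem 4.1, with the line-bundle insertions handled exactly as in Theorem 3.6. First I would factor the form multiplicatively,
$$\widetilde{Q}(\nabla^{TM},\nabla^{L},P_{i},P_{j},g,d,\tau)=\widetilde{Q}(M,\nabla^{L},P_{i},P_{j},\tau)\cdot{\rm ch}(Q(E),g^{Q(E)},d,\tau)^{(4r-1)},$$
where by Lemma 3.2 and the hypothesis $c_{3}(E,g,d)=0$ the second factor is a modular form of weight $2r$ over $SL(2,\mathbf{Z})$, and where
$$\widetilde{Q}(M,\nabla^{L},P_{i},P_{j},\tau)=\Big\{e^{\frac{1}{24}E_{2}(\tau)\cdot\frac{1}{30}(c_{2}(W_{i})+c_{2}(W_{j}))}\,\widehat{A}(TM,\nabla^{TM})\,{\rm exp}(\tfrac{c}{2})\,{\rm ch}(\Theta(T_{\mathbf{C}}M,L_{\mathbf{R}}\otimes\mathbf{C}))\,\varphi(\tau)^{(16)}ch(\mathcal{V}_{i})ch(\mathcal{V}_{j})\Big\}^{(4p)}$$
is the degree-$4p$ geometric factor, with $r+p=k$ forced by $4k-1=(4r-1)+4p$. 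It then suffices to prove that $\widetilde{Q}(M,\nabla^{L},P_{i},P_{j},\tau)$ transforms with weight $2p+8$, since the weights then add to $2r+2p+8=2k+8$.

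Next I would rewrite the geometric factor in terms of theta functions. Letting $\{\pm 2\pi\sqrt{-1}x_{j}\}_{1\le j\le 2k-1}$ be the formal Chern roots of $T_{\mathbf{C}}M$ and $y=-\tfrac{\sqrt{-1}}{2\pi}c$, the combination $\widehat{A}(TM,\nabla^{TM})\,{\rm exp}(\tfrac{c}{2})\,{\rm ch}(\Theta(T_{\mathbf{C}}M,L_{\mathbf{R}}\otimes\mathbf{C}))$ expands, as in Theorem 3.6, into a product of theta quotients in the $x_{j}$ and $y$, while each $E_{8}$ factor is written via the identity preceding Section 3 as $\tfrac{1}{2}\big(\prod_{l=1}^{8}\theta_{1}(y_{l}^{i},\tau)+\prod_{l=1}^{8}\theta_{2}(y_{l}^{i},\tau)+\prod_{l=1}^{8}\theta_{3}(y_{l}^{i},\tau)\big)$, and similarly for the index $j$. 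This exhibits $\widetilde{Q}(M,\nabla^{L},P_{i},P_{j},\tau)$ as the $E_{2}$-exponential times an explicit product of theta factors to which the transformation laws (2.16)--(2.28) apply.

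I would then check the two generators. Invariance under $T$ is immediate: $E_{2}(\tau+1)=E_{2}(\tau)$, the tangent and line quotients are unchanged, and the phase factors $e^{\pi\sqrt{-1}/4}$ from the $E_{8}$ theta-products cancel among the three summands. Under $S\!:\tau\mapsto-1/\tau$, each of the $2k-1$ tangent factors and the $y$-factor contributes one power of $\tau$ together with a Gaussian factor $e^{\pi\sqrt{-1}\tau(\cdot)}$; collected, the tangent and line anomalies assemble into an exponent proportional to $3p_{1}(L)-p_{1}(M)$, which vanishes by hypothesis. Each of the two $E_{8}$ factors contributes $(\tau/\sqrt{-1})^{4}$ (weight $4$) plus a Gaussian anomaly governed by $\sum_{l}(2\pi\sqrt{-1}y_{l}^{i})^{2}=-\tfrac{1}{30}c_{2}(W_{i})$, and the correction $e^{\frac{1}{24}E_{2}(\tau)\cdot\frac{1}{30}(c_{2}(W_{i})+c_{2}(W_{j}))}$, transformed through $E_{2}(-1/\tau)=\tau^{2}E_{2}(\tau)-\tfrac{6\sqrt{-1}\tau}{\pi}$, produces exactly the compensating Gaussian. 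After all anomalies cancel, one obtains $\widetilde{Q}(M,\nabla^{L},P_{i},P_{j},-1/\tau)=\tau^{2p+8}\widetilde{Q}(M,\nabla^{L},P_{i},P_{j},\tau)$, which combined with the weight-$2r$ factor yields total weight $2k+8$.

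The main obstacle will be the bookkeeping in the $S$-transformation: one must verify that the Gaussian anomalies contributed separately by $T_{\mathbf{C}}M$, by $L$, and by the two $E_{8}$ factors combine precisely into the two quantities $3p_{1}(L)-p_{1}(M)$ and $\tfrac{1}{30}(c_{2}(W_{i})+c_{2}(W_{j}))$, so that the hypothesis and the $E_{2}$-correction annihilate them simultaneously. This is the only place where the condition $3p_{1}(L)-p_{1}(M)=0$ and the specific coefficient $\tfrac{1}{30}$ in the $E_{2}$-exponential are used; once the cancellation is confirmed exactly as in Theorems 3.6 and 4.1, holomorphicity on $\mathbf{H}$ is automatic and the weight count completes the proof.
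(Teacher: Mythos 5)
Your proposal is correct and follows essentially the same route the paper takes: the paper states this theorem without writing out a proof, but its explicit proofs of the analogous results (Theorems 3.2 and 4.1) use exactly your decomposition into the weight-$2r$ factor ${\rm ch}(Q(E),g^{Q(E)},d,\tau)^{(4r-1)}$ (via the Lemma, numbered 3.1 in the paper, not 3.2) times a degree-$4p$ geometric factor, the theta-function rewriting, and the cancellation of the $S$-transformation anomalies against the hypothesis $3p_{1}(L)-p_{1}(M)=0$ and the $E_{2}$-exponential. The weight count $2r+2p+8=2k+8$ matches the paper's.
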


A straightforward proof gives
\begin{thm}
For $7$-dimensional spin manifold, when $3p_1(L)-p_1(M)=0$ and $c_3(E,g,d)=0,$ we conclude that
\begin{equation}
\begin{aligned}
&\{ {\widehat{A}(TM,\nabla^{TM})}{\rm exp}(\frac{c}{2}){\rm ch}(\triangle(E),g^{\triangle(E)},d)[e^{\frac{1}{720}(c_{2}(W_{i})+c_{2}(W_{j}))}\cdot\tfrac{1}{2}(-6+\tfrac{1}{30}(c_{2}(W_{i})\\
&+c_{2}(W_{j})))\cdot\tfrac{1}{30}(c_{2}(W_{i})+c_{2}(W_{j}))-e^{\frac{1}{720}(c_{2}(W_{i})+c_{2}(W_{j}))}\cdot\tfrac{1}{30}(c_{2}(W_{i})+c_{2}(W_{j}))\\
&\cdot{\rm ch}(W_i+W_j-16)+e^{\frac{1}{720}(c_{2}(W_{i})+c_{2}(W_{j}))}{\rm ch}(-16W_i-16W_j+W_iW_j+\overline{W_{i}}\\
&+\overline{W_{j}}+104)-e^{\frac{1}{720}(c_{2}(W_{i})+c_{2}(W_{j}))}\cdot\tfrac{1}{30}(c_{2}(W_{i})+c_{2}(W_{j})){\rm ch}(\widetilde{T_\mathbf{C}M}+\widetilde{E_\mathbf{C}}-\widetilde{E_\mathbf{C}}\\
    &\otimes\widetilde{E_\mathbf{C}}+2\wedge^2\widetilde{E_\mathbf{C}}+\widetilde{L_{\bf{R}}\otimes\mathbf{C}}-(\widetilde{L_{\bf{R}}\otimes\mathbf{C}})\otimes(\widetilde{L_{\bf{R}}\otimes\mathbf{C}})+2\wedge^2\widetilde{L_{\bf{R}}\otimes\mathbf{C}})\\
    &+e^{\frac{1}{720}(c_{2}(W_{i})+c_{2}(W_{j}))}{\rm ch}(W_i+W_j-16){\rm ch}(\widetilde{T_\mathbf{C}M}+\widetilde{E_\mathbf{C}}-\widetilde{E_\mathbf{C}}
    \otimes\widetilde{E_\mathbf{C}}+2\wedge^2\widetilde{E_\mathbf{C}}\\
    &+\widetilde{L_{\bf{R}}\otimes\mathbf{C}}-(\widetilde{L_{\bf{R}}\otimes\mathbf{C}})
    \otimes(\widetilde{L_{\bf{R}}\otimes\mathbf{C}})+2\wedge^2\widetilde{L_{\bf{R}}\otimes\mathbf{C}})+e^{\frac{1}{720}(c_{2}(W_{i})+c_{2}(W_{j}))}\\
    &\cdot{\rm ch}(\widetilde{T_\mathbf{C}M}+S^2\widetilde{T_\mathbf{C}M}+\widetilde{T_\mathbf{C}M}\otimes(\widetilde{E_\mathbf{C}}-\widetilde{E_\mathbf{C}}
    \otimes\widetilde{E_\mathbf{C}}+2\wedge^2\widetilde{E_\mathbf{C}}+\widetilde{L_{\bf{R}}\otimes\mathbf{C}}\\
    &-(\widetilde{L_{\bf{R}}\otimes\mathbf{C}})
    \otimes(\widetilde{L_{\bf{R}}\otimes\mathbf{C}})+2\wedge^2\widetilde{L_{\bf{R}}\otimes\mathbf{C}})+\wedge^2\widetilde{E_\mathbf{C}}\otimes\wedge^2\widetilde{E_\mathbf{C}}
+2\wedge^4\widetilde{E_\mathbf{C}}-2\widetilde{E_\mathbf{C}}\end{aligned}
\end{equation}
\begin{equation}
\begin{aligned}
&\otimes\wedge^3\widetilde{E_\mathbf{C}}
+2\widetilde{E_\mathbf{C}}\otimes\wedge^2\widetilde{E_\mathbf{C}}-\widetilde{E_\mathbf{C}}\otimes
\widetilde{E_\mathbf{C}}\otimes\widetilde{E_\mathbf{C}}+\widetilde{E_\mathbf{C}}
+\wedge^2\widetilde{E_\mathbf{C}}+(2\wedge^2\widetilde{E_\mathbf{C}}-\widetilde{E_\mathbf{C}}\\
&\otimes\widetilde{E_\mathbf{C}}+\widetilde{E_\mathbf{C}})\otimes(2\wedge^2\widetilde{L_{\bf{R}}\otimes\mathbf{C}}-(\widetilde{L_{\bf{R}}\otimes\mathbf{C}})\otimes(\widetilde{L_{\bf{R}}\otimes\mathbf{C}})+\widetilde{L_{\bf{R}}\otimes\mathbf{C}})+\wedge^2\widetilde{L_{\bf{R}}\otimes\mathbf{C}}\\
&\otimes\wedge^2\widetilde{L_{\bf{R}}
\otimes\mathbf{C}}+2\wedge^4\widetilde{L_{\bf{R}}\otimes\mathbf{C}}-2\widetilde{L_{\bf{R}}\otimes\mathbf{C}}\otimes\wedge^3\widetilde{L_{\bf{R}}
\otimes\mathbf{C}}+2\widetilde{L_{\bf{R}}\otimes\mathbf{C}}\otimes\wedge^2\widetilde{L_{\bf{R}}\otimes\mathbf{C}}\\
&-\widetilde{L_{\bf{R}}\otimes\mathbf{C}}\otimes\widetilde{L_{\bf{R}}
\otimes\mathbf{C}}\otimes\widetilde{L_{\bf{R}}
\otimes\mathbf{C}}+\widetilde{L_{\bf{R}}
\otimes\mathbf{C}}+\wedge^2\widetilde{L_{\bf{R}}\otimes\mathbf{C}})]\}^{(7)}=196560\\
    &\cdot\{ e^{\frac{1}{720}(c_{2}(W_{i})+c_{2}(W_{j}))}{\widehat{A}(TM,\nabla^{TM})}{\rm exp}(\frac{c}{2}){\rm ch}(\triangle(E),g^{\triangle(E)},d) \}^{(7)}-24\{ {\rm exp}(\frac{c}{2})\\
    &\cdot{\widehat{A}(TM,\nabla^{TM})}{\rm ch}(\triangle(E),g^{\triangle(E)},d)[-e^{\frac{1}{720}(c_{2}(W_{i})+c_{2}(W_{j}))}\cdot\tfrac{1}{30}(c_{2}(W_{i})+c_{2}(W_{j}))\\
    &+e^{\frac{1}{720}(c_{2}(W_{i})+c_{2}(W_{j}))}{\rm ch}(W_i+W_j-16)+e^{\frac{1}{720}(c_{2}(W_{i})+c_{2}(W_{j}))}{\rm ch}(\widetilde{T_\mathbf{C}M}+\widetilde{E_\mathbf{C}}\\
    &-\widetilde{E_\mathbf{C}}
    \otimes\widetilde{E_\mathbf{C}}+2\wedge^2\widetilde{E_\mathbf{C}}+\widetilde{L_{\bf{R}}\otimes\mathbf{C}}-(\widetilde{L_{\bf{R}}\otimes\mathbf{C}})
    \otimes(\widetilde{L_{\bf{R}}\otimes\mathbf{C}})+2\wedge^2\widetilde{L_{\bf{R}}\otimes\mathbf{C}})]\}^{(7)}.\nonumber
\end{aligned}
\end{equation}
\end{thm}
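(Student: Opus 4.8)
The plan is to realize $\widetilde{Q}(\nabla^{TM},\nabla^{L},P_{i},P_{j},g,d,\tau)$ as an element of a two-dimensional space of modular forms and then extract the stated identity purely by comparing $q$-expansions. By Theorem 4.5 applied with $k=2$, the hypotheses $3p_{1}(L)-p_{1}(M)=0$ and $c_{3}(E,g,d)=0$ ensure that $\widetilde{Q}(\nabla^{TM},\nabla^{L},P_{i},P_{j},g,d,\tau)$ is a modular form over $SL(2,\mathbf{Z})$ of weight $2k+8=12$. Since every weight-$12$ modular form over $SL(2,\mathbf{Z})$ is a linear combination of $E_{4}(\tau)^{3}$ and $E_{6}(\tau)^{2}$, I would write
\begin{align}
\widetilde{Q}(\nabla^{TM},\nabla^{L},P_{i},P_{j},g,d,\tau)=\lambda_{1}E_{4}(\tau)^{3}+\lambda_{2}E_{6}(\tau)^{2},
\end{align}
where $\lambda_{1}$ and $\lambda_{2}$ are degree-$7$ characteristic forms to be determined.

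The next step is to compute the coefficients of $q^{0},q^{1},q^{2}$ in the $q$-expansion of the left-hand side. The three constituent factors expand exactly as in the previous proofs: the prefactor $e^{\frac{1}{24}E_{2}(\tau)\cdot\frac{1}{30}(c_{2}(W_{i})+c_{2}(W_{j}))}$ is handled as in Theorem 4.2, the factor $\varphi(\tau)^{(16)}ch(\mathcal{V}_{i})ch(\mathcal{V}_{j})$ expands as $1+q\,{\rm ch}(W_{i}+W_{j}-16)+q^{2}\,{\rm ch}(-16W_{i}-16W_{j}+W_{i}W_{j}+\overline{W_{i}}+\overline{W_{j}}+104)+\cdots$, and the geometric factor $\widehat{A}(TM,\nabla^{TM}){\rm exp}(\tfrac{c}{2}){\rm ch}(\Theta(T_{\mathbf{C}}M,L_{\bf{R}}\otimes\mathbf{C})){\rm ch}(Q(E),g^{Q(E)},d,\tau)$ expands exactly as in the proof of Theorem 3.8. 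Indeed, the full expansion here is obtained from the spin$^{c}$ $E_{8}$ computation underlying Theorem 3.8 by the substitutions $c_{2}(W_{i})\mapsto c_{2}(W_{i})+c_{2}(W_{j})$ and $\varphi(\tau)^{(8)}ch(\mathcal{V}_{i})\mapsto\varphi(\tau)^{(16)}ch(\mathcal{V}_{i})ch(\mathcal{V}_{j})$. Taking degree-$7$ components gives the constant term $a_{0}=\{e^{\frac{1}{720}(c_{2}(W_{i})+c_{2}(W_{j}))}\widehat{A}(TM,\nabla^{TM}){\rm exp}(\tfrac{c}{2}){\rm ch}(\triangle(E),g^{\triangle(E)},d)\}^{(7)}$, a coefficient $a_{1}$ of $q$, and a coefficient $a_{2}$ of $q^{2}$; here $a_{2}$ is precisely the large bracketed expression on the left of the claimed identity.

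Comparing coefficients in the decomposition above and using $E_{4}(\tau)^{3}=1+720q+179280q^{2}+\cdots$ together with $E_{6}(\tau)^{2}=1-1008q+220752q^{2}+\cdots$ yields the linear system $\lambda_{1}+\lambda_{2}=a_{0}$, $720\lambda_{1}-1008\lambda_{2}=a_{1}$, and $179280\lambda_{1}+220752\lambda_{2}=a_{2}$. Solving the first two relations gives $\lambda_{1}=\tfrac{1}{1728}(1008\,a_{0}+a_{1})$ and $\lambda_{2}=\tfrac{1}{1728}(720\,a_{0}-a_{1})$; substituting these into the third relation eliminates $\lambda_{1},\lambda_{2}$ and produces the single identity $a_{2}=196560\,a_{0}-24\,a_{1}$, which is exactly the asserted cancellation formula, with $196560=\tfrac{1}{1728}(179280\cdot1008+220752\cdot720)$ and $-24=\tfrac{1}{1728}(179280-220752)$.

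I expect the main obstacle to be the explicit evaluation of $a_{2}$: one must multiply out the three $q$-expansions, isolate the coefficient of $q^{2}$, and take its degree-$7$ part, carefully tracking the many tensor-power virtual bundles arising in ${\rm ch}(Q(E),g^{Q(E)},d,\tau)$ and in ${\rm ch}(\Theta(T_{\mathbf{C}}M,L_{\bf{R}}\otimes\mathbf{C})){\rm exp}(\tfrac{c}{2})$. This is a lengthy but entirely mechanical bookkeeping step, wholly parallel to the spin case of Theorem 4.2; no analytic ingredient beyond the modularity furnished by Theorem 4.5 is required.
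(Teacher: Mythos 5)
Your proposal is correct and follows essentially the same route the paper (implicitly) takes: Theorem 4.5 with $k=2$ gives modularity of weight $12$, the form is written as $\lambda_{1}E_{4}^{3}+\lambda_{2}E_{6}^{2}$, and eliminating $\lambda_{1},\lambda_{2}$ from the $q^{0},q^{1},q^{2}$ coefficients yields $a_{2}=196560\,a_{0}-24\,a_{1}$, exactly as in the paper's treatment of the analogous weight-$12$ cases (Theorems 3.5 and 4.2). Your numerical check of $196560=\tfrac{1}{1728}(179280\cdot 1008+220752\cdot 720)$ and $-24=\tfrac{1}{1728}(179280-220752)$ is accurate, and your identification of $a_{2}$ with the bracketed expression via the substitution $c_{2}(W_{i})\mapsto c_{2}(W_{i})+c_{2}(W_{j})$, $\varphi(\tau)^{(8)}ch(\mathcal{V}_{i})\mapsto\varphi(\tau)^{(16)}ch(\mathcal{V}_{i})ch(\mathcal{V}_{j})$ in the Theorem 3.8 expansion matches the paper.
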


\begin{thm}
For $11$-dimensional spin manifold, when $3p_1(L)-p_1(M)=0$ and $c_3(E,g,d)=0,$ we conclude that
 \begin{equation}
\begin{aligned}
& \{ {\widehat{A}(TM,\nabla^{TM})}{\rm exp}(\frac{c}{2}){\rm ch}(\triangle(E),g^{\triangle(E)},d)[-e^{\frac{1}{720}(c_{2}(W_{i})+c_{2}(W_{j}))}\cdot\tfrac{1}{30}(c_{2}(W_{i})+c_{2}(W_{j}))\\
&+e^{\frac{1}{720}(c_{2}(W_{i})+c_{2}(W_{j}))}{\rm ch}(W_i+W_j-16)+e^{\frac{1}{720}(c_{2}(W_{i})+c_{2}(W_{j}))}{\rm ch}(\widetilde{T_\mathbf{C}M}+\widetilde{E_\mathbf{C}}\\
&-\widetilde{E_\mathbf{C}}
    \otimes\widetilde{E_\mathbf{C}}+2\wedge^2\widetilde{E_\mathbf{C}}+\widetilde{L_{\bf{R}}\otimes\mathbf{C}}-(\widetilde{L_{\bf{R}}\otimes\mathbf{C}})
    \otimes(\widetilde{L_{\bf{R}}\otimes\mathbf{C}})+2\wedge^2\widetilde{L_{\bf{R}}\otimes\mathbf{C}})]\}^{(11)}\\
    &=-24\{ e^{\frac{1}{720}(c_{2}(W_{i})+c_{2}(W_{j}))}{\widehat{A}(TM,\nabla^{TM})}{\rm exp}(\frac{c}{2}){\rm ch}(\triangle(E),g^{\triangle(E)},d) \}^{(11)},
\end{aligned}
\end{equation}
  and
 \begin{equation}
\begin{aligned}
&\{ {\widehat{A}(TM,\nabla^{TM})}{\rm exp}(\frac{c}{2}){\rm ch}(\triangle(E),g^{\triangle(E)},d)[e^{\frac{1}{720}(c_{2}(W_{i})+c_{2}(W_{j}))}\cdot\tfrac{1}{2}(-6+\tfrac{1}{30}(c_{2}(W_{i})\\
&+c_{2}(W_{j})))\cdot\tfrac{1}{30}(c_{2}(W_{i})+c_{2}(W_{j}))-e^{\frac{1}{720}(c_{2}(W_{i})+c_{2}(W_{j}))}\cdot\tfrac{1}{30}(c_{2}(W_{i})+c_{2}(W_{j}))\\
&\cdot{\rm ch}(W_i+W_j-16)+e^{\frac{1}{720}(c_{2}(W_{i})+c_{2}(W_{j}))}{\rm ch}(-16W_i-16W_j+W_iW_j+\overline{W_{i}}\\
&+\overline{W_{j}}+104)-e^{\frac{1}{720}(c_{2}(W_{i})+c_{2}(W_{j}))}\cdot\tfrac{1}{30}(c_{2}(W_{i})+c_{2}(W_{j})){\rm ch}(\widetilde{T_\mathbf{C}M}+\widetilde{E_\mathbf{C}}\\
&-\widetilde{E_\mathbf{C}}
    \otimes\widetilde{E_\mathbf{C}}+2\wedge^2\widetilde{E_\mathbf{C}}+\widetilde{L_{\bf{R}}\otimes\mathbf{C}}-(\widetilde{L_{\bf{R}}\otimes\mathbf{C}})\otimes(\widetilde{L_{\bf{R}}\otimes\mathbf{C}})+2\wedge^2\widetilde{L_{\bf{R}}\otimes\mathbf{C}})\\
    &+e^{\frac{1}{720}(c_{2}(W_{i})+c_{2}(W_{j}))}{\rm ch}(W_i+W_j-16){\rm ch}(\widetilde{T_\mathbf{C}M}+\widetilde{E_\mathbf{C}}-\widetilde{E_\mathbf{C}}
    \otimes\widetilde{E_\mathbf{C}}+2\wedge^2\widetilde{E_\mathbf{C}}\\
    &+\widetilde{L_{\bf{R}}\otimes\mathbf{C}}-(\widetilde{L_{\bf{R}}\otimes\mathbf{C}})
    \otimes(\widetilde{L_{\bf{R}}\otimes\mathbf{C}})+2\wedge^2\widetilde{L_{\bf{R}}\otimes\mathbf{C}})+e^{\frac{1}{720}(c_{2}(W_{i})+c_{2}(W_{j}))}{\rm ch}(\widetilde{T_\mathbf{C}M}\\
    &+S^2\widetilde{T_\mathbf{C}M}+\widetilde{T_\mathbf{C}M}\otimes(\widetilde{E_\mathbf{C}}-\widetilde{E_\mathbf{C}}
    \otimes\widetilde{E_\mathbf{C}}+2\wedge^2\widetilde{E_\mathbf{C}}+\widetilde{L_{\bf{R}}\otimes\mathbf{C}}-(\widetilde{L_{\bf{R}}\otimes\mathbf{C}})\\
   &\otimes(\widetilde{L_{\bf{R}}\otimes\mathbf{C}})+2\wedge^2\widetilde{L_{\bf{R}}\otimes\mathbf{C}})+\wedge^2\widetilde{E_\mathbf{C}}\otimes\wedge^2\widetilde{E_\mathbf{C}}
+2\wedge^4\widetilde{E_\mathbf{C}}-2\widetilde{E_\mathbf{C}}\otimes\wedge^3\widetilde{E_\mathbf{C}}\end{aligned}
\end{equation}
\begin{equation}
\begin{aligned}
&+2\widetilde{E_\mathbf{C}}\otimes\wedge^2\widetilde{E_\mathbf{C}}-\widetilde{E_\mathbf{C}}\otimes
\widetilde{E_\mathbf{C}}\otimes\widetilde{E_\mathbf{C}}+\widetilde{E_\mathbf{C}}
+\wedge^2\widetilde{E_\mathbf{C}}+(2\wedge^2\widetilde{E_\mathbf{C}}-\widetilde{E_\mathbf{C}}\otimes\widetilde{E_\mathbf{C}}\\
&+\widetilde{E_\mathbf{C}})\otimes(2\wedge^2\widetilde{L_{\bf{R}}\otimes\mathbf{C}}-(\widetilde{L_{\bf{R}}\otimes\mathbf{C}})\otimes(\widetilde{L_{\bf{R}}\otimes\mathbf{C}})+\widetilde{L_{\bf{R}}\otimes\mathbf{C}})+\wedge^2\widetilde{L_{\bf{R}}\otimes\mathbf{C}}\\
&\otimes\wedge^2\widetilde{L_{\bf{R}}
\otimes\mathbf{C}}+2\wedge^4\widetilde{L_{\bf{R}}\otimes\mathbf{C}}-2\widetilde{L_{\bf{R}}\otimes\mathbf{C}}\otimes\wedge^3\widetilde{L_{\bf{R}}
\otimes\mathbf{C}}+2\widetilde{L_{\bf{R}}\otimes\mathbf{C}}\otimes\wedge^2\widetilde{L_{\bf{R}}\otimes\mathbf{C}}\\
&-\widetilde{L_{\bf{R}}\otimes\mathbf{C}}\otimes\widetilde{L_{\bf{R}}
\otimes\mathbf{C}}\otimes\widetilde{L_{\bf{R}}
\otimes\mathbf{C}}+\widetilde{L_{\bf{R}}
\otimes\mathbf{C}}+\wedge^2\widetilde{L_{\bf{R}}\otimes\mathbf{C}})]\}^{(11)}=-196632\\
    &\cdot\{ e^{\frac{1}{720}(c_{2}(W_{i})+c_{2}(W_{j}))}{\widehat{A}(TM,\nabla^{TM})}{\rm exp}(\frac{c}{2}){\rm ch}(\triangle(E),g^{\triangle(E)},d) \}^{(11)}.\nonumber
\end{aligned}
\end{equation}
\end{thm}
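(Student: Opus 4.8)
The plan is to reproduce, in the spin$^c$ $E_8\times E_8$ setting, the argument already carried out for the $11$-dimensional spin case and for the lower-dimensional spin$^c$ cases. First I would invoke the modularity theorem established just above for this construction, specialized to $k=3$: under the hypotheses $3p_1(L)-p_1(M)=0$ and $c_3(E,g,d)=0$, the form $\widetilde{Q}(\nabla^{TM},\nabla^{L},P_i,P_j,g,d,\tau)$ is a modular form over $SL(2,\mathbf{Z})$ of weight $2k+8=14$. This is the only input that requires the geometric hypotheses, and it reduces the entire statement to a Fourier-coefficient comparison.

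Next I would use that the space of weight-$14$ modular forms over $SL(2,\mathbf{Z})$ is one-dimensional, spanned by $E_4(\tau)^2E_6(\tau)$. Hence there is a characteristic form $\lambda$ on $M$, independent of $\tau$, with $\widetilde{Q}(\nabla^{TM},\nabla^{L},P_i,P_j,g,d,\tau)=\lambda\cdot E_4(\tau)^2E_6(\tau)$. Recalling the expansion $E_4(\tau)^2E_6(\tau)=1-24q-196632q^2+\cdots$ already recorded for the $11$-dimensional spin case, the proof reduces to comparing the coefficients of $q^0,q^1,q^2$ in $\widetilde{Q}$ against $\lambda$, $-24\lambda$, and $-196632\lambda$ respectively.

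Then I would compute the $q$-expansion of $\widetilde{Q}$ directly. This follows the same mechanical expansion used for the $E_8$ spin$^c$ form, with only two substitutions: the prefactor is replaced by $e^{\frac{1}{24}E_2(\tau)\cdot\frac{1}{30}(c_2(W_i)+c_2(W_j))}$, expanded via $E_2(\tau)=1-24q-72q^2+\cdots$, and the single-bundle character $\varphi(\tau)^{(8)}ch(\mathcal{V}_i)$ is replaced by $\varphi(\tau)^{(16)}ch(\mathcal{V}_i)ch(\mathcal{V}_j)=1+q\,{\rm ch}(W_i+W_j-16)+q^2\,{\rm ch}(-16W_i-16W_j+W_iW_j+\overline{W_i}+\overline{W_j}+104)+\cdots$. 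Multiplying these series against the $q$-expansions of $\widehat{A}(TM,\nabla^{TM}){\rm exp}(\tfrac{c}{2}){\rm ch}(\Theta(T_{\mathbf{C}}M,L_{\mathbf{R}}\otimes\mathbf{C}))$ and of ${\rm ch}(Q(E),g^{Q(E)},d,\tau)$ yields the three coefficients. The constant term collapses to $\lambda=\{e^{\frac{1}{720}(c_2(W_i)+c_2(W_j))}\widehat{A}(TM,\nabla^{TM}){\rm exp}(\tfrac{c}{2}){\rm ch}(\triangle(E),g^{\triangle(E)},d)\}^{(11)}$, since $\Theta$, $\varphi(\tau)^{(16)}ch(\mathcal{V}_i)ch(\mathcal{V}_j)$, and ${\rm ch}(Q(E),\cdots)/{\rm ch}(\triangle(E),\cdots)$ each have leading term $1$; then equating the $q$- and $q^2$-coefficients with $-24\lambda$ and $-196632\lambda$ produces the two asserted identities.

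The hard part will be the organization of the $q^2$-coefficient. The virtual-bundle expansion of $\Theta(T_{\mathbf{C}}M,L_{\mathbf{R}}\otimes\mathbf{C})$, the quadratic term of the $E_8\times E_8$ character, and the quadratic term of ${\rm ch}(Q(E),g^{Q(E)},d,\tau)$ all contribute, and their product expands into a large tensor-bundle expression mixing $\widetilde{T_{\mathbf{C}}M}$, $\widetilde{E_{\mathbf{C}}}$, and $\widetilde{L_{\mathbf{R}}\otimes\mathbf{C}}$ summands. The delicate bookkeeping is to keep the factors ${\rm exp}(\tfrac{c}{2})$ and ${\rm ch}(\triangle(E),g^{\triangle(E)},d)$ pulled out cleanly while collecting the remaining contributions; this is purely mechanical but lengthy, and is the only substantial labor in the proof.
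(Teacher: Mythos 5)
Your proposal is correct and follows essentially the same route the paper takes (the paper leaves this proof implicit, but its explicit proofs of the analogous statements, e.g.\ Theorems 4.3 and 3.8, proceed exactly as you describe): invoke the weight-$(2k+8)=14$ modularity from the preceding theorem, use that the weight-$14$ space over $SL(2,\mathbf{Z})$ is spanned by $E_{4}(\tau)^{2}E_{6}(\tau)=1-24q-196632q^{2}+\cdots$, and compare the $q^{0}$, $q$, and $q^{2}$ coefficients of the $q$-expansion of $\widetilde{Q}(\nabla^{TM},\nabla^{L},P_{i},P_{j},g,d,\tau)$. Your identification of the constant term $\lambda$ and of the two substitutions relative to the single-$E_{8}$ spin$^{c}$ computation is exactly what the paper's expansion yields.
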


\subsection{Modular forms and Witten genus in $(4k+1)$-dimensional spin$^c$ manifolds}\hfill\\

Set $M$ be a closed oriented ${\rm spin}^{c}$ manifold of dimension $4k+1.$
Consider
\begin{equation}
\begin{aligned}
\bar{Q}(\nabla^{TM},\nabla^{L},P_{i},P_{j},g,d,\tau)=&\{e^{\frac{1}{24}E_{2}(\tau)\cdot\frac{1}{30}(c_{2}(W_{i})+c_{2}(W_{j}))} \widehat{A}(TM,\nabla^{TM}){\rm exp}(\frac{c}{2})\\
&\cdot{\rm ch}(\Theta(T_{\mathbf{C}}M,L_{\bf{R}}\otimes\mathbf{C})){\rm ch}(Q(E),g^{Q(E)},d,\tau)\\
&\cdot\varphi(\tau)^{(16)}ch(\mathcal{V}_{i})ch(\mathcal{V}_{j})\}^{(4k+1)}.
\end{aligned}
\end{equation}

Applying a similar method leads to the following three new results.

\begin{thm}
Let ${\rm dim}M=4k+1,$ for $k\leq 3.$  If $p_1(L)-p_1(M)=0$ and $c_3(E,g,d)=0,$ then $\bar{Q}(\nabla^{TM},\nabla^{L},P_{i},P_{j},g,d,\tau)$ is a modular form over $SL(2,{\bf Z})$ with the weight $2k+8.$
\end{thm}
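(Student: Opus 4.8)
The plan is to follow verbatim the method already used for Theorems 4.1 and 4.6, adapted to the $(4k+1)$-dimensional spin$^{c}$ bookkeeping carried out for $\bar{Q}$ in Section 3.3. First I would factor
\[
\bar{Q}(\nabla^{TM},\nabla^{L},P_{i},P_{j},g,d,\tau)=\bar{Q}(M,P_{i},P_{j},\tau)\cdot{\rm ch}(Q(E),g^{Q(E)},d,\tau)^{(4r-1)},
\]
where $\bar{Q}(M,P_{i},P_{j},\tau)$ gathers the $E_{2}$-exponential, $\widehat{A}(TM,\nabla^{TM})$, ${\rm exp}(\frac{c}{2})$, ${\rm ch}(\Theta^{*}(T_{\mathbf{C}}M,L_{\mathbf{R}}\otimes\mathbf{C}))$ (cf. Section 3.3) and $\varphi(\tau)^{(16)}ch(\mathcal{V}_{i})ch(\mathcal{V}_{j})$, taken in degree $4p+2$. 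Since the odd Chern character form sits in degree $4r-1$ and the product must land in degree $4k+1$, the identity $(4r-1)+(4p+2)=4k+1$ forces $r+p=k$.

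Next I would express $\bar{Q}(M,P_{i},P_{j},\tau)$ through theta functions. Writing $\{\pm 2\pi\sqrt{-1}x_{j}\}_{1\le j\le 2k}$ for the formal Chern roots of $T_{\mathbf{C}}M$ and $c=2\pi\sqrt{-1}y$ for $L$, the factor $\widehat{A}(TM,\nabla^{TM}){\rm ch}(\bigotimes_{n}S_{q^{n}}(\widetilde{T_{\mathbf{C}}M}))$ becomes the Witten product $\prod_{j}\frac{x_{j}\theta'(0,\tau)}{\theta(x_{j},\tau)}$, while ${\rm exp}(\frac{c}{2})$ together with the line-bundle exterior powers $\bigotimes_{m}\wedge_{-q^{m}}(\widetilde{L_{\mathbf{R}}\otimes\mathbf{C}})$ assembles into a single quotient of the shape $\frac{\theta(y,\tau)}{\theta'(0,\tau)}$, contributing weight $-1$. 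Finally, using the roots $y_{l}^{i},y_{l}^{j}$ and the identity stated at the end of Section 2, the $E_{8}\times E_{8}$ factor becomes $\frac14\big(\sum_{a}\prod_{l}\theta_{a}(y_{l}^{i},\tau)\big)\big(\sum_{b}\prod_{l}\theta_{b}(y_{l}^{j},\tau)\big)$, each parenthesis a weight-$4$ object, so together they supply weight $8$.

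I would then apply the transformation laws $(2.16)$--$(2.28)$, the law $E_{2}(-1/\tau)=\tau^{2}E_{2}(\tau)-6\sqrt{-1}\tau/\pi$, and the Jacobi identity to read off the behavior of $\bar{Q}(M,P_{i},P_{j},\tau)$ under $T$ and $S$. Under $T$ the accumulated $e^{\pi\sqrt{-1}/4}$ phases cancel (the number of $\theta$-factors is even, $N$ being even), giving $T$-invariance. Under $S$ each $\theta$-factor yields a power of $(\tau/\sqrt{-1})^{1/2}$ — combining to weight $2p+8$ on the degree-$(4p+2)$ component — together with an anomalous exponential $e^{\pi\sqrt{-1}\tau(\,\cdot\,)}$. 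The whole purpose of the prefactor $e^{\frac{1}{24}E_{2}(\tau)\cdot\frac{1}{30}(c_{2}(W_{i})+c_{2}(W_{j}))}$ is that its anomalous term $-\frac{6\sqrt{-1}\tau}{\pi}$ from the $E_{2}$-law exactly absorbs the $E_{8}\times E_{8}$ anomaly via $\sum_{l}(2\pi\sqrt{-1}y_{l}^{i})^{2}=-\frac{1}{30}c_{2}(W_{i})$ (and likewise for $j$), while the hypothesis $p_{1}(L)-p_{1}(M)=0$ cancels the residual anomaly from $\sum_{j}(2\pi\sqrt{-1}x_{j})^{2}$ against the $y^{2}$-contribution of ${\rm exp}(\frac{c}{2})$ and $\Theta^{*}$. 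This yields $\bar{Q}(M,P_{i},P_{j},-1/\tau)=\tau^{2p+8}\bar{Q}(M,P_{i},P_{j},\tau)$.

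To conclude, I would invoke the Lemma: since $c_{3}(E,g,d)=0$, the form ${\rm ch}(Q(E),g^{Q(E)},d,\tau)^{(4r-1)}$ is a modular form of weight $2r$ over $SL(2,\mathbf{Z})$. Multiplying the two pieces shows $\bar{Q}$ is $T$-invariant and scales by $\tau^{2r+2p+8}=\tau^{2k+8}$ under $S$, hence is a modular form of weight $2k+8$. I expect the only genuinely delicate step to be the $S$-anomaly cancellation: verifying that the exponentials from the tangent, line, and $E_{8}\times E_{8}$ theta transformations combine with the $E_{2}$-anomaly to vanish precisely under $p_{1}(L)-p_{1}(M)=0$ and the $c_{2}$-normalization; the rest is degree/weight bookkeeping. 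The restriction $k\le 3$ (dimension $\le 13$) is exactly the range in which the closed theta-product description of $ch(\mathcal{V}_{i})$ via the $y_{l}^{i}$ controls the anomaly through $c_{2}(W_{i})$ alone, higher Chern data of $W_{i}$ entering only beyond it.
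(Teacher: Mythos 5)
Your proposal is essentially the paper's own argument: the paper gives no proof of this theorem beyond the remark that ``a similar method'' applies, and your factorization into a degree-$(4p+2)$ theta-quotient piece of weight $2p+8$ times the weight-$2r$ form ${\rm ch}(Q(E),g^{Q(E)},d,\tau)^{(4r-1)}$, with the $S$-anomaly absorbed by the $E_{2}$-prefactor via $\sum_{l}(2\pi\sqrt{-1}y_{l}^{i})^{2}=-\tfrac{1}{30}c_{2}(W_{i})$ and by the hypothesis $p_{1}(L)-p_{1}(M)=0$, is exactly the method of the proofs of Theorems 3.2 and 4.1. You also correctly use $\Theta^{*}(T_{\mathbf{C}}M,L_{\mathbf{R}}\otimes\mathbf{C})$, which is what the hypothesis $p_{1}(L)-p_{1}(M)=0$ and the $q$-expansion in Section 3.3 require, even though the displayed definition of $\bar{Q}(\nabla^{TM},\nabla^{L},P_{i},P_{j},g,d,\tau)$ in Section 4.3 writes $\Theta(T_{\mathbf{C}}M,L_{\mathbf{R}}\otimes\mathbf{C})$.
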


\begin{thm}
For $13$-dimensional spin manifold, when $p_1(L)-p_1(M)=0$ and $c_3(E,g,d)=0,$ we conclude that
 \begin{equation}
\begin{aligned}
&\{ {\widehat{A}(TM,\nabla^{TM})}{\rm exp}(\frac{c}{2}){\rm ch}(\triangle(E),g^{\triangle(E)},d)[-e^{\frac{1}{720}(c_{2}(W_{i})+c_{2}(W_{j}))}\cdot\tfrac{1}{30}(c_{2}(W_{i})\\
&+c_{2}(W_{j}))+e^{\frac{1}{720}(c_{2}(W_{i})+c_{2}(W_{j}))}{\rm ch}(W_i+W_j-16)+e^{\frac{1}{720}(c_{2}(W_{i})+c_{2}(W_{j}))}{\rm ch}(\widetilde{T_\mathbf{C}M}\\
&+\widetilde{E_\mathbf{C}}-\widetilde{E_\mathbf{C}} \otimes\widetilde{E_\mathbf{C}}+2\wedge^2\widetilde{E_\mathbf{C}}-\widetilde{L_{\bf{R}}\otimes\mathbf{C}})]\}^{(13)}=-24\{ e^{\frac{1}{720}(c_{2}(W_{i})+c_{2}(W_{j}))}\\
&{\widehat{A}(TM,\nabla^{TM})}{\rm exp}(\frac{c}{2}){\rm ch}(\triangle(E),g^{\triangle(E)},d) \}^{(13)},
\end{aligned}
\end{equation}
  and
 \begin{equation}
\begin{aligned}
&\{ {\widehat{A}(TM,\nabla^{TM})}{\rm exp}(\frac{c}{2}){\rm ch}(\triangle(E),g^{\triangle(E)},d)[e^{\frac{1}{720}(c_{2}(W_{i})+c_{2}(W_{j}))}\cdot\tfrac{1}{2}(-6+\tfrac{1}{30}(c_{2}(W_{i})\\
&+c_{2}(W_{j})))\cdot\tfrac{1}{30}(c_{2}(W_{i})+c_{2}(W_{j}))-e^{\frac{1}{720}(c_{2}(W_{i})+c_{2}(W_{j}))}\cdot\tfrac{1}{30}(c_{2}(W_{i})+c_{2}(W_{j}))\\
&\cdot{\rm ch}(W_i+W_j-16)+e^{\frac{1}{720}(c_{2}(W_{i})+c_{2}(W_{j}))}{\rm ch}(-16W_i-16W_j+W_iW_j+\overline{W_{i}}\\
&+\overline{W_{j}}+104)-e^{\frac{1}{720}(c_{2}(W_{i})+c_{2}(W_{j}))}\cdot\tfrac{1}{30}(c_{2}(W_{i})+c_{2}(W_{j})){\rm ch}(\widetilde{T_\mathbf{C}M}+\widetilde{E_\mathbf{C}}\end{aligned}
\end{equation}
\begin{equation}
\begin{aligned}
&-\widetilde{E_\mathbf{C}}
    \otimes\widetilde{E_\mathbf{C}}+2\wedge^2\widetilde{E_\mathbf{C}}-\widetilde{L_{\bf{R}}\otimes\mathbf{C}})+e^{\frac{1}{720}(c_{2}(W_{i})+c_{2}(W_{j}))}{\rm ch}(W_i+W_j-16)\\
    &\cdot{\rm ch}(\widetilde{T_\mathbf{C}M}+\widetilde{E_\mathbf{C}}-\widetilde{E_\mathbf{C}}
    \otimes\widetilde{E_\mathbf{C}}+2\wedge^2\widetilde{E_\mathbf{C}}-\widetilde{L_{\bf{R}}\otimes\mathbf{C}})+e^{\frac{1}{720}(c_{2}(W_{i})+c_{2}(W_{j}))}\\
    &\cdot{\rm ch}(\widetilde{T_\mathbf{C}M}+S^2\widetilde{T_\mathbf{C}M}+\widetilde{T_\mathbf{C}M}\otimes(\widetilde{E_\mathbf{C}}-\widetilde{E_\mathbf{C}}
    \otimes\widetilde{E_\mathbf{C}}+2\wedge^2\widetilde{E_\mathbf{C}}-\widetilde{L_{\bf{R}}\otimes\mathbf{C}})\\
    &-\widetilde{L_{\bf{R}}\otimes\mathbf{C}}+\wedge^2\widetilde{L_{\bf{R}}\otimes\mathbf{C}}-\widetilde{L_{\bf{R}}\otimes\mathbf{C}} \otimes(2\wedge^2\widetilde{E_\mathbf{C}}-\widetilde{E_\mathbf{C}}
    \otimes\widetilde{E_\mathbf{C}}+\widetilde{E_\mathbf{C}})+\wedge^2\widetilde{E_\mathbf{C}}\\
    &\otimes\wedge^2\widetilde{E_\mathbf{C}}
+2\wedge^4\widetilde{E_\mathbf{C}}-2\widetilde{E_\mathbf{C}}\otimes\wedge^3\widetilde{E_\mathbf{C}}
+2\widetilde{E_\mathbf{C}}\otimes\wedge^2\widetilde{E_\mathbf{C}}-\widetilde{E_\mathbf{C}}\otimes
\widetilde{E_\mathbf{C}}\otimes\widetilde{E_\mathbf{C}}\\
&+\widetilde{E_\mathbf{C}}+\wedge^2\widetilde{E_\mathbf{C}})]\}^{(13)}=-196632\{ e^{\frac{1}{720}(c_{2}(W_{i})+c_{2}(W_{j}))}{\widehat{A}(TM,\nabla^{TM})}{\rm exp}(\frac{c}{2})\\
    &\cdot{\rm ch}(\triangle(E),g^{\triangle(E)},d) \}^{(13)}.\nonumber
\end{aligned}
\end{equation}
\end{thm}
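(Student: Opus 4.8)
The plan is to reproduce, in dimension $13$, the weight-$14$ coefficient-comparison argument already carried out for the spin case in Theorem 4.3. First I invoke Theorem 4.8 with $k=3$: since $\dim M = 4k+1 = 13$, that result guarantees that $\bar{Q}(\nabla^{TM},\nabla^{L},P_{i},P_{j},g,d,\tau)$ is a modular form over $SL(2,\mathbf{Z})$ of weight $2k+8 = 14$ under the stated hypotheses $p_1(L)-p_1(M)=0$ and $c_3(E,g,d)=0$. The decisive structural fact is that the space of weight-$14$ modular forms over $SL(2,\mathbf{Z})$ is one-dimensional, spanned by $E_4(\tau)^2 E_6(\tau)$, whose expansion
\begin{align}
E_4(\tau)^2 E_6(\tau) = 1 - 24q - 196632\,q^2 + \cdots
\end{align}
is precisely the series recorded in the proof of Theorem 4.3. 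Hence $\bar{Q} = \lambda\,E_4^2 E_6$ for a single degree-$13$ characteristic form $\lambda$, namely the $q^0$-coefficient of $\bar{Q}$.

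Next I would write out the $q$-expansion of $\bar{Q}$ through order $q^2$. This is assembled exactly as in the single-$E_8$ computation of Section 3.3, with the two modifications forced by passing to $E_8\times E_8$: the prefactor becomes $e^{\frac{1}{24}E_2(\tau)\cdot\frac{1}{30}(c_2(W_i)+c_2(W_j))}$, expanded through $E_2=1-24q-72q^2+\cdots$, and the bundle factor becomes
\begin{align}
\varphi(\tau)^{(16)}\mathrm{ch}(\mathcal{V}_i)\mathrm{ch}(\mathcal{V}_j) = {}& 1 + q\,\mathrm{ch}(W_i+W_j-16) \nonumber \\
&+ q^2\,\mathrm{ch}(-16W_i-16W_j+W_iW_j+\overline{W_i}+\overline{W_j}+104) + \cdots,
\end{align}
as computed in the proof of Theorem 4.2. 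Multiplying these two series against the Chern-character expansion of $\widehat{A}\cdot\mathrm{exp}(\tfrac{c}{2})\cdot\mathrm{ch}(\Theta^*(T_{\mathbf{C}}M,L_{\mathbf{R}}\otimes\mathbf{C}))$ and against the odd Chern character $\mathrm{ch}(Q(E),g^{Q(E)},d,\tau)$ expanded via (3.35), then extracting the degree-$13$ component, produces the $q^0$, $q^1$ and $q^2$ coefficients that appear on the left-hand sides of the two asserted identities. Reading off $\lambda$ from the $q^0$ term gives
\begin{align}
\lambda = \Big\{ e^{\frac{1}{720}(c_2(W_i)+c_2(W_j))}\widehat{A}(TM,\nabla^{TM})\,\mathrm{exp}\big(\tfrac{c}{2}\big)\,\mathrm{ch}(\triangle(E),g^{\triangle(E)},d)\Big\}^{(13)}.
\end{align}

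Finally, the two cancellation formulas follow by matching the remaining coefficients against $\lambda\,E_4^2 E_6$: equating the $q^1$-coefficient of $\bar{Q}$ with $-24\lambda$ yields the first identity, and equating the $q^2$-coefficient with $-196632\lambda$ yields the second. No new modular-invariance input is required beyond Theorem 4.8, so the entire argument past that point is purely algebraic. The main obstacle is therefore organizational rather than conceptual: one must carefully collect the many cross terms of the fourfold product at orders $q^1$ and $q^2$, keep the transgression form $\mathrm{ch}(Q(E),g^{Q(E)},d,\tau)$ correctly factored through $\mathrm{ch}(\triangle(E),g^{\triangle(E)},d)$, and maintain the symmetric dependence on the two $E_8$ factors $W_i,W_j$; this is where transcription slips are most likely, but it introduces no genuine difficulty once the single-$E_8$ expansion of Section 3.3 and the weight-$14$ skeleton of Theorem 4.3 are in hand.
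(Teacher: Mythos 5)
Your proposal is correct and follows exactly the route the paper intends (the paper omits the proof, saying only that "a similar method" applies): invoke the weight-$14$ modularity from Theorem 4.8 with $k=3$, use the one-dimensionality of $M_{14}(SL(2,\mathbf{Z}))$ spanned by $E_4^2E_6 = 1-24q-196632q^2+\cdots$, and compare the $q^0$, $q^1$, $q^2$ coefficients of the expansion of $\bar{Q}$ assembled from the $\Theta^*$-expansion of Section 3.3 and the $E_8\times E_8$ factors from Theorem 4.2. You also correctly read the Witten bundle as $\Theta^*(T_{\mathbf{C}}M,L_{\mathbf{R}}\otimes\mathbf{C})$, consistent with the hypothesis $p_1(L)-p_1(M)=0$ and with the $-\widetilde{L_{\mathbf{R}}\otimes\mathbf{C}}$ terms in the statement, even though the paper's display (4.24) misprints it as $\Theta$.
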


\section{Acknowledgements}

This research is supported by the Scientific Research Project of the Jilin Provincial Department of Education, China (Grant No. JJKH20261588KJ).

The authors also thank the referee for his (or her) careful reading and helpful comments.
The second author would like to thank Prof. Fei Han for helpful discussions.

\vskip 1 true cm

{\bf Data availability}. No data was gathered for this article.

{\bf Conflict of interest}. The authors have no relevant financial or non-financial interests to disclose.

\vskip 1 true cm


\bigskip
\bigskip

\noindent {\footnotesize {\it S. Liu} \\
{School of Mathematics and Statistics, Changchun University of Science and Technology, Changchun 130022, China}\\
{Email: liusy719@nenu.edu.cn}

\noindent {\footnotesize {\it Y. Wang} \\
{School of Mathematics and Statistics, Northeast Normal University, Changchun 130024, China}\\
{Email: wangy581@nenu.edu.cn}

\end{document}